\numberwithin{equation}{section}
\newtheorem{theorem}{Theorem}[section]
\newtheorem{proposition}[theorem]{Proposition}
\newtheorem{lemma}[theorem]{Lemma}
\newtheorem{corollary}[theorem]{Corollary}
\theoremstyle{definition}
\newtheorem{example}[theorem]{Example}
\newtheorem{conjecture}[theorem]{Conjecture}
\theoremstyle{remark}
\newtheorem{remark}[theorem]{Remark}
\newcommand{\Z}{\mathbb{Z}}
\newcommand{\Q}{\mathbb{Q}}
\newcommand{\R}{\mathbb{R}}
\newcommand{\C}{\mathbb{C}}
\renewcommand{\t}{\mathfrak{t}}
\newcommand{\Hom}{\operatorname{Hom}}
\newcommand{\Rep}{\operatorname{Rep}}
\newcommand{\hocolim}{\operatorname{hocolim}}
\newcommand{\rank}{\operatorname{rank}}
\newcommand{\Int}{\operatorname{Int}}
\title[Torsion in the space of commuting elements in a Lie group]{Torsion in the space of commuting elements in a Lie group}
\author{Daisuke Kishimoto}
\address{Faculty of Mathematics, Kyushu University, Fukuoka, 819-0395, Japan}
\email{kishimoto@math.kyushu-u.ac.jp}
\author{Masahiro Takeda}
\address{Department of Mathematics, Kyoto University, Kyoto, 606-8502, Japan}
\email{takeda.masahiro.87u@st.kyoto-u.ac.jp}
\subjclass[2010]{57S05, 55P65}
\keywords{space of commuting elements, Lie group, Weyl group, homotopy colimit, Bousfield-Kan spectral sequence, extended Dynkin diagram}
\begin{document}

  \maketitle

  \begin{abstract}
    Let $G$ be a compact connected Lie group, and let $\Hom(\Z^m,G)$ be the space of pairwise commuting $m$-tuples in $G$. We study the problem of which primes $p$ $\Hom(\Z^m,G)_1$, the connected component of $\Hom(\Z^m,G)$ containing the element $(1,\ldots,1)$, has $p$-torsion in homology. We will prove that $\Hom(\Z^m,G)_1$ for $m\ge 2$ has $p$-torsion in homology if and only if $p$ divides the order of the Weyl group of $G$ for $G=SU(n)$ and some exceptional groups. We will also compute the top homology of $\Hom(\Z^m,G)_1$ and show that $\Hom(\Z^m,G)_1$ always has 2-torsion in homology whenever $G$ is simply-connected and simple. Our computation is based on a new homotopy decomposition of $\Hom(\Z^m,G)_1$, which is of independent interest and enables us to connect torsion in homology to the combinatorics of the Weyl group.
  \end{abstract}


  \section{Introduction}\label{Introduction}

  Let $\pi$ be a discrete group, and let $G$ be a compact connected Lie group. Let $\Hom(\pi,G)$ denote the space of homomorphisms from $\pi$ to $G$, having the induced topology of the space of continuous maps from $\pi$ to $G$. In this paper, we study torsion in the homology of $\Hom(\Z^m,G)_1$, the connected component of $\Hom(\Z^m,G)$ containing the trivial homomorphism.

  The space $\Hom(\pi,G)$ has connections to diverse contexts of mathematics and physics \cite{BFM,G,KS,We,W1,W2}, and the topology of $\Hom(\pi,G)$ has been intensely studied in recent years, especially when $\pi$ is a free abelian group. The space $\Hom(\Z^m,G)$ is identified with the space of commuting $m$-tuples in $G$, so that it is often called the \emph{space of commuting elements}. See \cite{AC,ACG,AGG,B,BS,BJS,C,GPS,KT,RS1,RS2,T} and references therein. In particular, Baird \cite{B} described the cohomology of $\Hom(\Z^m,G)_1$ over a field of characteristic not dividing the order of the Weyl group of $G$ or zero as a certain ring of invariants of the Weyl group. Based on this result, Ramras and Stafa \cite{RS1} gave a formula for the Poincar\'e series of $\Hom(\Z^m,G)_1$. We start with recalling this formula. Let $W$ denote the Weyl group of $G$, and let $\mathbb{F}$ be a field of characteristic not dividing the order of $W$ or zero. Then Ramras and Stafa \cite{RS1} proved that the Poincar\'e series of the cohomology of $\Hom(\Z^m,G)_1$ over $\mathbb{F}$ is given by
  \begin{equation*}
    \label{Poincare series}
    \frac{\prod_{i=1}^r(1-t^{2d_i})}{|W|}\sum_{w\in W}\frac{\det(1+tw)^m}{\det(1-t^2w)},
  \end{equation*}
  where $d_1,\ldots,d_r$ are the characteristic degrees of $W$, that is, the rational cohomology of $G$ is an exterior algebra generated by elements of degrees $2d_1-1,\ldots,2d_r-1$. A more explicit formula for the Poincar\'e series was obtained by the authors \cite{KT}, and a minimal generating set of the cohomology over $\mathbb{F}$ was also obtained there. An explicit description of the cohomology of $\Hom(\Z^2,G)_1$ over $\mathbb{F}$ for $G$ of rank two was obtained by the second author \cite{T}. Notice that the Poincar\'e series is independent of the ground field $\mathbb{F}$ as long as its characteristic does not divide the order of $W$ or is zero. Then we immediately get the non-existence of torsion in homology.

  \begin{lemma}
    \label{possible torsion}
    The homology of $\Hom(\Z^m,G)_1$ has $p$-torsion in homology only when $p$ divides the order of $W$.
  \end{lemma}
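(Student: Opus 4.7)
The plan is to read off $p$-torsion information directly from the Ramras–Stafa Poincaré series formula recalled above. Suppose $p$ does not divide $|W|$. I would apply the formula twice, once with $\mathbb{F}=\mathbb{Q}$ and once with $\mathbb{F}=\mathbb{F}_p$; both applications are legitimate because the hypothesis on the characteristic is satisfied in each case. The resulting Poincaré series are given by the \emph{same} rational function in $t$, so the rational and mod $p$ Betti numbers of $\Hom(\Z^m,G)_1$ coincide in every degree.

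I would then translate this coincidence into a statement about integral torsion via the universal coefficient theorem. Writing $H_n$ for $H_n(\Hom(\Z^m,G)_1;\Z)$, each $H_n$ is finitely generated since $\Hom(\Z^m,G)_1$ is a compact subspace of $G^m$ (the commuting relations are closed) and has the homotopy type of a finite CW complex. Universal coefficients then give
\[
\dim_{\mathbb{F}_p} H_n(\Hom(\Z^m,G)_1;\mathbb{F}_p) = \rank H_n + \tau_p(H_n) + \tau_p(H_{n-1}),
\]
where $\tau_p(A)$ denotes the number of cyclic $p$-power summands of a finitely generated abelian group $A$. Comparing with $\dim_\Q H_n(\Hom(\Z^m,G)_1;\Q)=\rank H_n$ and using the equality of Poincaré series, the nonnegative quantities $\tau_p(H_n)+\tau_p(H_{n-1})$ must vanish for every $n$, so $\tau_p(H_n)=0$ and $H_n$ has no $p$-torsion.

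There is really no hard step here: the argument is a direct application of the Ramras–Stafa formula combined with universal coefficients. The only bookkeeping point is to confirm that the integral homology of $\Hom(\Z^m,G)_1$ is finitely generated so that the universal coefficient theorem applies in the form above; this follows from $\Hom(\Z^m,G)$ being a compact real algebraic subvariety of $G^m$, each component of which has the homotopy type of a finite CW complex.
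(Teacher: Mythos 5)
Your proposal is correct and matches the paper's approach: the paper derives the lemma directly from the observation that the Ramras--Stafa Poincar\'e series is independent of the ground field whenever the characteristic is zero or prime to $|W|$, calling the conclusion ``immediate.'' You have simply spelled out the routine universal-coefficient bookkeeping that the paper leaves implicit.
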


  On the other hand, as for the existence of torsion in the homology of $\Hom(\Z^m,G)_1$, there are only a few results, the proofs of which do not extend to more general cases. Adem and Cohen \cite{AC} proved a stable splitting of $\Hom(\Z^m,G)$, and Baird, Jeffrey and Selick \cite{BJS} and Crabb \cite{C} described the splitting summands for $G=SU(2)$ explicitly. As a result, we can conclude that $\Hom(\Z^m,SU(2))_1$ has 2-torsion in homology for $m\ge 2$. Recently, Adem, G\'{o}mez and Gritschacher \cite{AGG} computed the second homology group of $\Hom(\Z^m,G)_1$, and so by combining with the result on the fundamental group by G\'{o}mez, Pettet and Souto \cite{GPS}, $\Hom(\Z^m,Sp(n))_1$ has 2-torsion in homology for $m\ge 3$. These are all known existence of torsion in homology so far.

  \subsection{Results}

  By Lemma \ref{possible torsion}, we must know the order of the Weyl group of a Lie group. Then we give a table of the order of the Weyl groups of compact simply-connected simple Lie groups.

  \renewcommand{\arraystretch}{1.3}

  \begin{table}[H]
    \centering
    \caption{}
    \label{order}
    \begin{tabular}{lllll}
      \hline
      Type&Lie group&Rank&$W$&$|W|$\\\hline
      $A_n$&$SU(n+1)$&$n$&$\Sigma_{n+1}$&$(n+1)!$\\
      $B_n$&$Spin(2n+1)$&$n$&$B_n$&$2^nn!$\\
      $C_n$&$Sp(n)$&$n$&$B_n$&$2^nn!$\\
      $D_n$&$Spin(2n)$&$n$&$B_n^+$&$2^{n-1}n!$\\
      $G_2$&$G_2$&$2$&-----&$2^2\cdot 3$\\
      $F_4$&$F_4$&$4$&-----&$2^7\cdot 3^2$\\
      $E_6$&$E_6$&$6$&-----&$2^7\cdot 3^4\cdot 5$\\
      $E_7$&$E_7$&$7$&-----&$2^{10}\cdot 3^4\cdot 5\cdot 7$\\
      $E_8$&$E_8$&$8$&-----&$2^{14}\cdot 3^5\cdot 5^2\cdot 7$\\\hline
    \end{tabular}
  \end{table}

  Now we state our results.

  \begin{theorem}
    \label{main torsion SU(n)}
    The homology of $\Hom(\Z^m,SU(n))_1$ for $m\ge 2$ has $p$-torsion if and only if $p\le n$.
  \end{theorem}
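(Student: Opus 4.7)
My plan is as follows. The \emph{only if} direction is immediate from Lemma \ref{possible torsion}: since $W(SU(n)) = \Sigma_n$ has order $n!$, the primes dividing $|W|$ are exactly the primes $p \le n$, so $\Hom(\Z^m, SU(n))_1$ cannot have $p$-torsion for $p > n$.

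The substance of the theorem is the \emph{if} direction, and for this my strategy is to invoke the new homotopy decomposition of $\Hom(\Z^m,G)_1$ announced in the abstract. I expect this decomposition to present $\Hom(\Z^m,G)_1$ as a homotopy colimit indexed by a combinatorial category built from the extended Dynkin diagram and the Weyl group: natural objects of this category are the proper subsets of nodes of the extended diagram, each determining a closed connected maximal rank subgroup of $G$ together with its Weyl subgroup of $W$. Feeding this $\hocolim$ presentation into the Bousfield--Kan spectral sequence then computes $H_*(\Hom(\Z^m,G)_1)$, with the $E_2$ page expressed through derived limits involving the (co)homology of the pieces and of the associated Weyl subgroups.

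For $G = SU(n)$ the extended Dynkin diagram is the cycle $\tilde A_{n-1}$. Given a prime $p \le n$, one can delete a suitable subset of its $n$ nodes to leave a sub-diagram of type $A_{p-1}$, whose associated Weyl subgroup is $\Sigma_p \le \Sigma_n$; the corresponding term in the decomposition should then carry $p$-torsion coming from the group cohomology of a cyclic subgroup of order $p$ in $\Sigma_p$. The concrete task is thus to produce, for each prime $p \le n$, an explicit $p$-torsion class on $E_2$ and to verify that it persists to $E_\infty$. I expect the main obstacle to lie precisely in this survival problem---ruling out cancellations by differentials or extensions in the Bousfield--Kan spectral sequence---and plan to handle it by exploiting naturality with respect to inclusions of sub-root systems $A_{p-1} \hookrightarrow A_{n-1}$ (reducing to smaller rank) and by bootstrapping from base cases such as $\Hom(\Z^m,SU(2))_1$, where $2$-torsion is already known from the work of Adem--Cohen, Baird--Jeffrey--Selick, and Crabb.
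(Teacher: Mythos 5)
Your reduction to the ``if'' direction and the observation that the primes dividing $|W(SU(n))|=n!$ are exactly $p\le n$ is correct, and your high-level instinct (use the homotopy decomposition as a $\hocolim$ indexed by combinatorics of the extended Dynkin diagram, then feed it into the Bousfield--Kan spectral sequence) matches the paper's framework. However, there is a genuine gap in how you propose to extract and preserve the torsion, and it is precisely at the point you yourself flag as the main obstacle.

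First, the torsion does not come from ``group cohomology of a cyclic subgroup of order $p$ in $\Sigma_p$.'' The pieces of the decomposition are fibers $\pi^{-1}(\sigma_0)$ over the barycenters of faces of the Weyl alcove $\Delta$, not classifying spaces, and the $E^1$-page of the spectral sequence of Proposition \ref{spectral sequence} is $\bigoplus_\sigma H_*(F_m(\sigma))$ with $d^1$ given by the poset structure; the multiplication-by-$|W(\tau)|/|W(\sigma)|$ maps between the top cells are what create torsion, not invariants of finite subgroups. Second, your plan to settle the survival problem ``by exploiting naturality with respect to inclusions of sub-root systems and by bootstrapping from $SU(2)$'' would not give $p$-torsion for odd $p$ (the $SU(2)$ base case only exhibits 2-torsion), and inclusions $A_{p-1}\hookrightarrow A_{n-1}$ do not induce retractions compatible with the alcove decomposition, so naturality alone does not force a class to persist.

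The paper circumvents the survival problem entirely rather than fighting it: it introduces a quotient functor $\widehat{F}_m$ that collapses each $F_m(\sigma)$ to its top sphere $S^{q_m}$, shows the induced map is an isomorphism on the top line of $E^1$ (Lemma \ref{Bousfield-Kan}), and then proves in Proposition \ref{direct summand} that $H_*(\hocolim\widehat{F}_m)$ is a \emph{direct summand} of $H_*(\Hom(\Z^m,G)_1)$ in degrees $\ge q_m$, because any differential landing in the top line would contradict this isomorphism. This reduces the question to $\hocolim\widehat{F}_2$, whose chain complex is essentially the chain complex of $\Delta$ with boundary weights $|W(\tau)|/|W(\sigma)|$. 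The paper then encodes $p$-divisibility into the subcomplexes $\Delta_p(k)\subset\Delta$ and proves (Theorem \ref{Delta torsion F}) that $p$-torsion in $\hocolim\widehat{F}_2$ is equivalent to nonvanishing of $\widetilde H_*(\Delta_p(k);\Z/p)$ for some $k$. For $SU(n+1)$ the final step is purely combinatorial: using Proposition \ref{cycle graph} (faces of $\Delta_p(0)$ are built by gluing paths of length $p^j$ along the cycle graph $C_{n+1}$) and the free-enough rotation action of $\Z/(n+1)$, one shows $\chi(\Delta_p(0))\ne 1$ via Lucas's theorem and the $p$-adic expansion of $n+1$. These two ideas---the direct-summand result for the top line and the reduction to $\chi(\Delta_p(k))$---are the content of the proof and are missing from your proposal.
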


  Since the Weyl group of $SU(n)$ is of order $n!$, it follows from Lemma \ref{possible torsion} that $\Hom(\Z^m,SU(n))_1$ has $p$-torsion in homology for all possible primes $p$. Then we can say that the space $\Hom(\Z^m,G)_1$ is quite complicated. We will also prove a similar result for some exceptional groups.

  \begin{theorem}
    \label{main torsion exceptional}
    Let $G=G_2,F_4,E_6$. Then $\Hom(\Z^m,G)_1$ for $m\ge 2$ has $p$-torsion in homology if and only if $p$ does not divide the order of the Weyl group of $G$.
  \end{theorem}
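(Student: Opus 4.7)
By Lemma \ref{possible torsion}, $H_*(\Hom(\Z^m,G)_1)$ carries no $p$-torsion whenever $p$ does not divide $|W|$. The substantive content of the stated equivalence is therefore the converse direction: for every prime $p$ dividing $|W|$, one must exhibit actual $p$-torsion in $H_*(\Hom(\Z^m,G)_1)$. Table \ref{order} pins down the relevant primes as $\{2,3\}$ for both $G_2$ and $F_4$, and $\{2,3,5\}$ for $E_6$.

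The plan is to deploy the new homotopy decomposition of $\Hom(\Z^m,G)_1$ promised in the abstract, which should express the space as a $\hocolim$ indexed by a small category $\mathcal{C}(G)$ built from Weyl-group combinatorics, most naturally from the extended Dynkin diagram. The associated Bousfield-Kan spectral sequence then computes $H_*(\Hom(\Z^m,G)_1;\mathbb{F}_p)$ via higher limits of explicit functors over $\mathcal{C}(G)$, while the Ramras-Stafa formula \eqref{Poincare series} already computes the rational Betti numbers. Any discrepancy between the mod-$p$ and rational Betti numbers of $\Hom(\Z^m,G)_1$ forces integral $p$-torsion by the universal coefficient theorem, so torsion detection reduces to a comparison of two computable Poincar\'e series.

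Concretely, for each prime $p$ dividing $|W|$ I would import $p$-torsion from a maximal-rank subgroup of $G$ along the naturality of the decomposition. For $p=2$, the long-root inclusion $SU(2)\hookrightarrow G$ should push the known $2$-torsion of $\Hom(\Z^m,SU(2))_1$ (\cite{BJS,C}) into $\Hom(\Z^m,G)_1$. For $p=3$, the inclusion $SU(3)\hookrightarrow G$ (visible in each of $G_2$, $F_4$, $E_6$ via a node of the extended Dynkin diagram) should analogously import $3$-torsion from $\Hom(\Z^m,SU(3))_1$, whose existence is guaranteed by Theorem \ref{main torsion SU(n)}. For the prime $p=5$, which is only relevant for $E_6$, the embedded $SU(6)\hookrightarrow E_6$ plays the same role, producing $5$-torsion via Theorem \ref{main torsion SU(n)}. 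In each case one must verify that the pushforward remains nontrivial on a Bockstein image, which is where naturality of the new decomposition is essential.

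The main obstacle is controlling the Bousfield-Kan differentials well enough to certify that the imported $\mathbb{F}_p$-classes actually survive to give integral torsion in $\Hom(\Z^m,G)_1$, rather than being killed by higher differentials or obscured by extension problems. A secondary, bookkeeping-type hurdle is that $F_4$ and $E_6$ admit several overlapping maximal-rank subgroups coming from the extended Dynkin diagram (for instance $Sp(3)\times SU(2)$ and $SU(3)\times SU(3)$ in $F_4$, and $SU(6)\times SU(2)$ and $SU(3)^3/(\Z/3)$ in $E_6$), so one must ensure that torsion contributions from different strata of the decomposition do not cancel against one another; this will likely require a separate, Weyl-group-by-Weyl-group analysis for each exceptional type rather than a single uniform argument.
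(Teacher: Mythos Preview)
Your proposal is a plan rather than a proof, and the plan has a genuine gap at its core. The mechanism you rely on---pushing $p$-torsion forward along an inclusion $\Hom(\Z^m,H)_1\to\Hom(\Z^m,G)_1$ induced by a maximal-rank subgroup $H\hookrightarrow G$---has no reason to be injective on homology, and you do not supply one. You invoke ``naturality of the new decomposition,'' but the decomposition in Theorem~\ref{homotopy decomposition} is indexed by the face poset of the Weyl alcove $\Delta$ of $G$, an $n$-simplex depending on $\operatorname{rank}G$; it is not functorial in subgroup inclusions, so there is no comparison diagram along which the Bousfield--Kan spectral sequences for $H$ and $G$ can be matched. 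Your acknowledged ``main obstacle'' is thus not a technicality but the entire content of the argument.

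The paper's proof is both different and, for most primes, far more elementary. Since $G$ itself is a retract of $\Hom(\Z^m,G)_1$ (via a coordinate inclusion and projection), any $p$-torsion in $H_*(G)$ already yields $p$-torsion in $H_*(\Hom(\Z^m,G)_1)$. Classical results on the cohomology of exceptional groups (see \cite[Chapter~7]{MT}) give $p$-torsion in $H_*(G)$ for every prime $p\mid |W|$ \emph{except} for $(G_2,3)$ and $(E_6,5)$; in particular $F_4$ is handled entirely by this retraction. For the two remaining cases the paper does not import torsion from a subgroup; instead it uses the top line of the spectral sequence of Proposition~\ref{spectral sequence}, extracted via the functor $\widehat{F}_2$, to reduce the question to a purely combinatorial criterion (Corollary~\ref{Delta torsion}): it suffices that the subcomplex $\Delta_p(0)=\{\sigma\in\Delta : p\nmid |W|/|W(\sigma)|\}$ have nontrivial reduced mod~$p$ homology. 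For $G_2$ at $p=3$ one checks directly from the extended Dynkin diagram that $\Delta_3(0)$ consists of two isolated vertices; for $E_6$ at $p=5$ the $\Z/3$ rotational symmetry of the extended Dynkin diagram acts freely on the faces of $\Delta_5(0)$, so $3\mid\chi(\Delta_5(0))$ and hence $\chi(\Delta_5(0))\ne 1$. No subgroup comparison and no control of higher differentials or extensions is needed.
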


  Then for $G=G_2,F_4,E_6$, $\Hom(\Z^m,G)_1$ for $m\ge 2$ has all possible torsion in homology, so that it is quite complicated too. We will also show the existence of some torsion in the homology of $\Hom(\Z^m,G)_1$ for other Lie groups $G$, though incomplete. See Section \ref{Computation of torsion in homology} and Corollary \ref{main 2-torsion} below for details. Our next result is on the top homology of $\Hom(\Z^m,G)_1$. See \cite{KT} for the top rational homology.

  \begin{theorem}
    \label{main top}
    Let $G$ be a compact simply-connected simple Lie group of rank $n$, and let
    \[
      d=
      \begin{cases}
        \dim G+n(m-1)-1&m\text{ is even}\\
        \dim G+n(m-1)&m\text{ is odd}.
      \end{cases}
    \]
    Then the top homology of $\Hom(\Z^m,G)_1$ is given by
    \[
      H_d(\Hom(\Z^m,G)_1)\cong
      \begin{cases}
        \Z/2&m\text{ is even}\\
        \Z&m\text{ is odd}.
      \end{cases}
    \]
  \end{theorem}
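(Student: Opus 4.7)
The approach is geometric: realize $\Hom(\Z^m,G)_1$ as a Weyl-group quotient of $G/T\times T^m$ away from a low-dimensional singular locus, and read off the top homology from an orientation computation.

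The conjugation map $\phi\colon G/T\times T^m\to\Hom(\Z^m,G)_1$, $(gT,t_1,\dots,t_m)\mapsto(gt_1g^{-1},\dots,gt_mg^{-1})$, is surjective, and over the regular locus $T^{m,\mathrm{reg}}$ of tuples with joint centralizer equal to $T$ it restricts to a principal $W$-bundle. Consequently, the open dense subset $U\subset\Hom(\Z^m,G)_1$ of regular tuples identifies with the free quotient $(G/T\times T^{m,\mathrm{reg}})/W$, a smooth manifold of real dimension $\dim G+n(m-1)$. The singular complement stratifies by Levi subgroups $L\supsetneq T$; a direct dimension count shows that each stratum has real codimension at least two in $\Hom(\Z^m,G)_1$, so by excision $H_d(\Hom(\Z^m,G)_1)\cong H_d^{BM}(U)$ for every $d\ge\dim G+n(m-1)-1$.

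To determine the orientation character of $U$, note that triviality of $\det\circ\mathrm{Ad}\colon G\to\R^\times$ on the compact connected $G$, combined with the splitting $\mathfrak{g}=\mathfrak{t}\oplus\mathfrak{g}/\mathfrak{t}$, forces the sign of the $w$-action on $T(G/T)$ to equal the Weyl determinant $\det(w)$; the sign on $T(T^m)$ is $\det(w)^m$. The total sign on $T(G/T\times T^m)$ is therefore $\det(w)^{m+1}$, and $U$ is orientable exactly when $m$ is odd. For $m$ odd, $U$ is a connected oriented manifold of dimension $d$, and Poincar\'e-Borel-Moore duality gives $H_d^{BM}(U)\cong\Z$, proving the odd case.

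For $m$ even, $U$ is non-orientable, so $H^{BM}_{\dim U}(U)=0$ and $H_{\dim G+n(m-1)}(\Hom(\Z^m,G)_1)=0$. Poincar\'e-Borel-Moore duality with the orientation local system $\mathcal{O}_U$ rewrites the desired top homology as $H^1(U;\mathcal{O}_U)$, and through the cover $G/T\times T^{m,\mathrm{reg}}\to U$ the sheaf $\mathcal{O}_U$ corresponds to the sign character $\det\colon W\to\{\pm 1\}$. The $\Z/2$ summand arises as the orientation Bockstein of the mod-$2$ fundamental class of $U$. The main obstacle is the integral refinement: one must show that no free summand appears in $H^1(U;\mathcal{O}_U)$. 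This is precisely where the new homotopy decomposition of $\Hom(\Z^m,G)_1$ established earlier in the paper, together with the Bousfield-Kan spectral sequence, is essential; it provides the fine integral control needed to isolate the $\det$-isotypic contribution and rule out any residual free class.
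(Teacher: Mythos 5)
Your approach is genuinely different from the paper's: geometric (regular locus as a $W$-quotient manifold, excision, Poincar\'e--Borel--Moore duality, orientation character) versus the paper's combinatorial/homotopy-theoretic route (triangulation of $T$, homotopy decomposition over the alcove $\Delta$, Bousfield--Kan spectral sequence for the functor $\widehat{F}_m$). For $m$ odd your argument is correct and is in essence a geometric repackaging of the paper's ``second proof'': the orientation character of $U$ being $\det(w)^{m+1}$ is exactly the statement that $H^{d-n}(G/T;\Q)\otimes H^n(T;\Q)^{\otimes m}$ is the $(m+1)$-st power of the sign representation, so both compute the same invariant. Two small points that need tightening even here: you need the regular locus of $T^m$ to be connected (true since $m\ge2$ makes the removed set of codimension $\ge2$), and the excision step at degree $d=\dim G+n(m-1)-1$ requires the singular locus to have codimension at least $3$, not just $2$ as you state; the actual dimension count gives codimension $\ge k(m+2)\ge4$ (where $k$ is the semisimple rank of the centralizing Levi), so the excision does go through, but you should state the stronger bound.

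For $m$ even there is a genuine gap. You correctly reduce the top homology to $H^1(U;\mathcal{O}_U)$, and the mod-$2$ Bockstein does produce a $\Z/2$ in the kernel of multiplication by $2$ on $H_{d}^{BM}(U;\Z)$. But this only shows the $2$-primary part contains a single cyclic summand $\Z/2^k$ with $k\ge1$; it does not rule out $\Z/4$, $\Z/8$, etc. Ruling out free summands also requires the separate rational vanishing of Lemma \ref{even top}. You explicitly defer the integral refinement to the homotopy decomposition and the Bousfield--Kan spectral sequence --- that is, to the proof you are supposed to be supplying. The paper closes this gap by computing the top line of $\widehat{E}^1$ concretely: the unique $n$-cell $\sigma$ has $|W(\sigma)|=1$, the $(n+1)$ facets $\tau_i$ have $|W(\tau_i)|=2$, so $d^1(u\otimes\sigma)=2\sum(-1)^i u\otimes\tau_i$ in a free abelian group, giving $\widehat{E}^2_{n-1,q_m}\cong\Z/2$ on the nose. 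Your geometric setup does not by itself reproduce this integral control, so as written the even case is incomplete.
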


  Since $\Hom(\Z^2,G)_1$ is a retract of $\Hom(\Z^m,G)_1$ for $m\ge 2$, we immediately obtain the following corollary.

  \begin{corollary}
    \label{main 2-torsion}
    Let $G$ be a compact simply-connected simple Lie group. Then $\Hom(\Z^m,G)_1$ for $m\ge 2$ has 2-torsion in homology.
  \end{corollary}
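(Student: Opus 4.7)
The plan is to reduce the corollary to the $m=2$ case by exhibiting $\Hom(\Z^2,G)_1$ as a retract of $\Hom(\Z^m,G)_1$, as flagged in the statement, and then to invoke Theorem~\ref{main top} at $m=2$.

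First I would verify the retract claim. The inclusion $i\colon\Hom(\Z^2,G)\to\Hom(\Z^m,G)$, $(x,y)\mapsto(x,y,1,\ldots,1)$, and the projection $p\colon\Hom(\Z^m,G)\to\Hom(\Z^2,G)$, $(x_1,\ldots,x_m)\mapsto(x_1,x_2)$, are continuous with $p\circ i=\mathrm{id}$. Both maps send the identity tuple to the identity tuple, and any path in $\Hom(\Z^m,G)_1$ from $(x_1,\ldots,x_m)$ to $(1,\ldots,1)$ projects under $p$ to a path in $\Hom(\Z^2,G)$ ending at $(1,1)$. Hence $p$ restricts to $\Hom(\Z^m,G)_1\to\Hom(\Z^2,G)_1$, and the restricted composition $p\circ i$ is still the identity, giving the desired retract.

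Next I would apply Theorem~\ref{main top} with $m=2$. Since $m=2$ is even, the theorem yields $H_d(\Hom(\Z^2,G)_1)\cong\Z/2$ for $d=\dim G+n-1$, where $n=\rank G$. In particular $\Hom(\Z^2,G)_1$ carries a $2$-torsion class in its top homology. Because $\Hom(\Z^2,G)_1$ is a retract of $\Hom(\Z^m,G)_1$, the inclusion induces a split monomorphism $H_*(\Hom(\Z^2,G)_1)\hookrightarrow H_*(\Hom(\Z^m,G)_1)$ of abelian groups, so this $2$-torsion class persists in $H_d(\Hom(\Z^m,G)_1)$, proving the corollary.

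There is no real obstacle here beyond Theorem~\ref{main top}: all of the technical difficulty has been absorbed into the computation of the top homology at $m=2$, and the corollary is a straightforward naturality argument that propagates the $2$-torsion from the $m=2$ case to all $m\ge 2$.
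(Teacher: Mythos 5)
Your proof is correct and follows exactly the same route the paper takes: apply Theorem~\ref{main top} at $m=2$ to get a $\Z/2$ in the top homology of $\Hom(\Z^2,G)_1$, then use the fact that $\Hom(\Z^2,G)_1$ is a retract of $\Hom(\Z^m,G)_1$ to transport the $2$-torsion class. Your verification of the retraction (via the inclusion $(x,y)\mapsto(x,y,1,\ldots,1)$ and the coordinate projection) is the standard argument the paper leaves implicit.
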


  Let $\pi$ be a finitely generated nilpotent group whose abelianization is of rank $m$. We can extend our results to $\Hom(\pi,G)_1$ as follows. Let $G(\C)$ be a complexification of $G$. Then Bergeron \cite{B} proved that $\Hom(\pi,G)$ is a deformation retract of $\Hom(\pi,G(\C))$. Moreover Bergeron and Silberman \cite{BS} proved that there is a homotopy equivalence $\Hom(\pi,G(\C))_1\simeq\Hom(\Z^m,G(\C))_1$. Then we get a homotopy equivalence
  \[
    \Hom(\pi,G)_1\simeq\Hom(\Z^m,G)_1
  \]
  and so all the results above also hold for $\Hom(\pi,G)_1$.

  \subsection{Summary of computation}

  We compute the homology of $\Hom(\Z^m,G)_1$ in three steps; the first step is to give a new homotopy decomposition of $\Hom(\Z^m,G)_1$, namely, we will describe $\Hom(\Z^m,G)_1$ as a homotopy colimit, the second step is to extract the top line of (a variant of) the Bousfield-Kan spectral sequence for the homotopy colimit in the first step, and the third step is to encode the information of the top line extracted in the second step into the combinatorial data of the extended Dynkin diagram of $G$.

  Let $G$ act on $\Hom(\pi,G)$ by conjugation. Then the quotient space, denoted by $\Rep(\pi,G)$, is called the \emph{representation space} or the \emph{character variety}, which has been studied in a variety of contexts \cite{AB,CS,Hi}. We will show that if $G$ is simply-connected and simple, then $\Rep(\Z,G)$, the quotient of $\Hom(\Z,G)_1$, is naturally identified with the Weyl alcove which is an $n$-simplex whose facets are defined by simple roots and the highest root, where $G$ is of rank $n$. We consider the composite
  \[
    \pi\colon\Hom(\Z^m,G)_1\to\Hom(\Z,G)\to\Rep(\Z,G)=\Delta^n,
  \]
  where the first map is the $m$-th projection and the second map is the quotient map. We will see that the fiber of $\pi$ is constant as long as the point belongs to the interior of some face of $\Delta^n$. Let $\sigma_0$ denote the barycenter of a face $\sigma$ of $\Delta^n$, and let $P(\Delta^n)$ denote the face poset of $\Delta^n$. Then we get a functor
  \[
    F_m\colon P(\Delta^n)\to\mathbf{Top},\quad\sigma\mapsto\pi^{-1}(\sigma_0),
  \]
  and a new homotopy decomposition in the first step is:

  \begin{theorem}
    [Theorem {\ref{hocolim}}]
    \label{homotopy decomposition}
    Let $G$ be a compact simply-connected simple Lie group. Then there is a homeomorphism
    \[
      \Hom(\Z^m,G)_1\cong\hocolim F_m.
    \]
  \end{theorem}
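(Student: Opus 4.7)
The plan is to construct an explicit homeomorphism $\Phi\colon\hocolim F_m\to\Hom(\Z^m,G)_1$ using the identification of $\Rep(\Z,G)$ with the Weyl alcove $\Delta^n$. First I would invoke the classical identification $\Rep(\Z,G)=T/W$; for simply-connected simple $G$, the exponential map restricted to a fundamental alcove gives a continuous section $s\colon\Delta^n\to T$ of the quotient $T\to T/W=\Delta^n$. Standard affine-Weyl-group theory then shows that the centralizer $Z_G(s(y))$ depends only on the open face $|\sigma|^\circ$ containing $y$; denote this common centralizer by $Z_\sigma$. Since $G$ is simply-connected, $Z_\sigma$ is connected.

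For each face $\sigma$ with barycenter $\sigma_0$, I would establish the fiber identification
\[
\pi^{-1}(\sigma_0)\cong G\times_{Z_\sigma}\Hom(\Z^{m-1},Z_\sigma)_1,
\]
sending $[h,(a_1,\ldots,a_{m-1})]$ to $(ha_1h^{-1},\ldots,ha_{m-1}h^{-1},hs(\sigma_0)h^{-1})$. The image consists of pairwise-commuting tuples because $a_i\in Z_\sigma=Z_G(s(\sigma_0))$; conversely, every element of $\pi^{-1}(\sigma_0)$ arises this way by conjugating its last entry to $s(\sigma_0)$ and observing that the remaining entries lie in $Z_\sigma$. Injectivity modulo $Z_\sigma$ is direct, and the path-component condition is verified by explicit paths in $T$ and in $Z_\sigma$. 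Setting $F_m(\sigma)=G\times_{Z_\sigma}\Hom(\Z^{m-1},Z_\sigma)_1$, the inclusions $Z_\sigma\subset Z_\tau$ for $\tau$ a face of $\sigma$ induce natural structure maps of twisted products, making $F_m$ a functor on $P(\Delta^n)$.

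To assemble $\Phi$, I would use the bar-construction model of $\hocolim F_m$. For a chain $\sigma_0\supsetneq\sigma_1\supsetneq\cdots\supsetneq\sigma_k$ of faces and weights $(t_0,\ldots,t_k)\in\Delta^k$, define
\[
\Phi\bigl([h,(a_i)],(t_j)\bigr)=\bigl(ha_1h^{-1},\ldots,ha_{m-1}h^{-1},\,hs\bigl(\sum\nolimits_j t_j\bar\sigma_j^{(0)}\bigr)h^{-1}\bigr),
\]
where $\bar\sigma_j^{(0)}$ is the barycenter of $\sigma_j$. Since the convex combination lies in $|\sigma_{j^*}|^\circ$ for $j^*=\min\{j:t_j>0\}$ and $Z_{\sigma_0}\subset Z_{\sigma_{j^*}}$, each $a_i$ commutes with $s(\sum_j t_j\bar\sigma_j^{(0)})$, so $\Phi$ lands in $\Hom(\Z^m,G)_1$. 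The face identifications of the bar construction act by the identity on representatives while enlarging the quotient group from $Z_\sigma$ to $Z_\tau$, precisely matching the structure maps of $F_m$; hence $\Phi$ descends to a continuous fiber-preserving bijection over $\Delta^n$. Compactness of the source and Hausdorffness of the target then give a homeomorphism.

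The main obstacle is the combinatorial bookkeeping in the last step: verifying that conjugation in $G$, inclusions of centralizers, and convex combinations of barycenters fit together to realize exactly the simplicial face identifications of the bar construction. The classical inputs---the alcove model of $\Rep(\Z,G)$, the constancy of centralizer type over each open face, and the twisted-product description of fibers---are essentially packaged in the first two paragraphs.
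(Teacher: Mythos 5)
Your proposal is correct, and it follows the same core strategy as the paper: fiber $\Hom(\Z^m,G)_1$ over the alcove $\Delta=\Rep(\Z,G)$ via the last coordinate, observe that the fiber is locally constant (constant over the interior of each face of $\Delta$), and assemble the fibers into a homotopy colimit over the face poset $P(\Delta)$. Where you diverge is in how the fiber and the hocolim model are described. You identify each fiber explicitly as a twisted product $\pi^{-1}(\sigma_0)\cong G\times_{Z_\sigma}\Hom(\Z^{m-1},Z_\sigma)_1$ using the section $s\colon\Delta\to T$ and the connectedness of centralizers in a simply-connected group, and you build the homeomorphism via the bar-construction model of $\hocolim F_m$ with an explicit formula over chains of faces. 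The paper instead deduces fiber constancy abstractly from an isotropy-subgroup argument applied to the surjection $G/T\times_WT^m\to\Hom(\Z^m,G)_1$ and writes $\Hom(\Z^m,G)_1$ as the geometric face-incidence model $\coprod_{\sigma\in P(\Delta)}\pi^{-1}(\sigma_0)\times\sigma/\sim$, which matches the stratification of $\Delta$ by its faces without any barycentric subdivision. Your twisted-product description of the fiber is a clean invariant statement and differs from the fiber model the paper records afterwards (Proposition~\ref{fiber hocolim}, $F_m(\sigma)\cong(\hocolim F_m^\sigma)/W(\sigma)$); either form suffices for this theorem. Your route buys a fully explicit, self-contained map, at the cost of the bar-construction bookkeeping you flag at the end; the paper's route is shorter but leaves the gluing compatibility more implicit.
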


  This homotopy decomposition seems to be of independent interest. We will see that if $m$ even, then for each $\sigma\in P(\Delta^n)$, $F_m(\sigma)$ is of dimension $\dim G+n(m-2)$ and its top homology is isomorphic with $\Z$. Thus we can consider the pinch map onto the top cell of $F_m(\sigma)$, which enables us to extract (a variant of) the Bousfield-Kan spectral sequence for $\hocolim F_m$, the second step. This pinch map onto the top cell can be explicitly described in terms faces of $\Delta^n$. Then since faces of $\Delta^n$ are defined by simple roots and the highest weight, the computation of the top line can be connected to the extended Dynkin diagram, which is the third step.

  \subsection*{Acknowledgement}

  The first author was supported by JSPS KAKENHI No. JP17K05248 and JP19K03473, and the second author was supported by JSPS KAKENHI No. JP21J10117.


  \section{Triangulation of a maximal torus}\label{Triangulation of a maximal torus}

  Hereafter, let $G$ denote a compact simply-connected simple Lie group such that $\rank G=n$ and $\dim G=d$. Let $T$ and $W$ denote a maximal torus and the Weyl group of $G$, respectively. This section constructs a $W$-equivariant triangulation of a maximal torus, which will play the fundamental role in our study. Let $\t$ be the Lie algebra of $G$, and let $\Phi$ be the set of roots of $G$. Recall that the Stiefel diagram is defined by
  \[
    \bigcup_{\substack{\alpha\in\Phi\\i\in\Z}}\alpha^{-1}(i)
  \]
  which is a subspace of $\t$, where each $\alpha^{-1}(i)$ is called a wall in the Stiefel diagram. For example, the Stiefel diagram of $Sp(2)$ is given as follows, where integer points are indicated by white points.

  \begin{figure}[htbp]
    \centering
    \begin{tikzpicture}[x=0.7cm, y=0.7cm, thick]
      \draw(0.5,0.5)--(5.5,5.5);
      \draw(0.5,2.5)--(3.5,5.5);
      \draw(0.5,4.5)--(1.5,5.5);
      \draw(2.5,0.5)--(5.5,3.5);
      \draw(4.5,0.5)--(5.5,1.5);
      \draw(1.5,0.5)--(0.5,1.5);
      \draw(3.5,0.5)--(0.5,3.5);
      \draw(5.5,0.5)--(0.5,5.5);
      \draw(5.5,2.5)--(2.5,5.5);
      \draw(5.5,4.5)--(4.5,5.5);
      \draw(1,0.5)--(1,5.5);
      \draw(2,0.5)--(2,5.5);
      \draw(3,0.5)--(3,5.5);
      \draw(4,0.5)--(4,5.5);
      \draw(5,0.5)--(5,5.5);
      \draw(0.5,1)--(5.5,1);
      \draw(0.5,2)--(5.5,2);
      \draw(0.5,3)--(5.5,3);
      \draw(0.5,4)--(5.5,4);
      \draw(0.5,5)--(5.5,5);
      \fill[white](1,1) circle[radius=2.5pt];
      \draw(1,1) circle[radius=2.5pt];
      \fill[white](1,3) circle[radius=2.5pt];
      \draw(1,3) circle[radius=2.5pt];
      \fill[white](1,5) circle[radius=2.5pt];
      \draw(1,5) circle[radius=2.5pt];
      \fill[white](3,1) circle[radius=2.5pt];
      \draw(3,1) circle[radius=2.5pt];
      \fill[white](3,3) circle[radius=2.5pt];
      \draw(3,3) circle[radius=2.5pt];
      \fill[white](3,5) circle[radius=2.5pt];
      \draw(3,5) circle[radius=2.5pt];
      \fill[white](5,1) circle[radius=2.5pt];
      \draw(5,1) circle[radius=2.5pt];
      \fill[white](5,3) circle[radius=2.5pt];
      \draw(5,3) circle[radius=2.5pt];
      \fill[white](5,5) circle[radius=2.5pt];
      \draw(5,5) circle[radius=2.5pt];
    \end{tikzpicture}
  \end{figure}

  Since $G$ is simple, its Stiefel diagram is a simplicial complex such that every $k$-face is included in an intersection of exactly $n-k$ walls.

  \begin{lemma}
    \label{Stiefel diagram}
    If two vertices $v$ and $v+w$ of the Stiefel diagram of $G$ are joined by an edge, then $w$ is a vertex of the Stiefel diagram which is joined with the vertex $0$ by an edge.
  \end{lemma}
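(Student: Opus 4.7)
The plan is to reduce the statement to a fact about the fundamental alcove via the transitive action of the affine Weyl group on alcoves. The affine Weyl group $W_{\mathrm{aff}}=W\ltimes Q^\vee$ (with $Q^\vee$ the coroot lattice) acts on $\t$ by affine isometries, preserves the Stiefel diagram as a simplicial complex, and acts transitively on the alcoves. I would fix any alcove $A$ containing the edge $(v,v+w)$ and pick $g=\tau_t\circ\phi\in W_{\mathrm{aff}}$ (with $\phi\in W$ and $\tau_t$ translation by $t\in Q^\vee$) satisfying $g(A)=A_0$, where $A_0$ denotes the fundamental alcove chosen so that $0\in\t$ is one of its vertices. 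Then the two endpoints are carried to vertices $v_i,v_j$ of $A_0$, and because translations do not affect differences, $\phi(w)=v_j-v_i$. Since $\phi$ is linear, fixes $0$, and preserves the Stiefel diagram, it maps every edge from $0$ to an edge from $0$, so the problem reduces to proving: for every pair of distinct vertices $v_i,v_j$ of $A_0$, the vector $v_j-v_i$ is a vertex of the Stiefel diagram joined to $0$ by an edge.

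When one of $i,j$ is zero the claim is immediate, since $v_j-v_0=v_j$ is a vertex of $A_0$ joined to $v_0=0$ by an edge of the simplex $A_0$, and the case $j=0$ follows by applying the involution $x\mapsto-x$ (which preserves the Stiefel diagram, as roots come in pairs $\pm\alpha$) to the edge $(0,v_i)$. For the substantive case $i,j>0$ I would use the explicit description $v_k=\omega_k^\vee/c_k$, where $\omega_k^\vee$ denotes the fundamental coweight dual to $\alpha_k$ and the $c_k$ are the marks in the highest-root expansion $\tilde\alpha=\sum c_k\alpha_k$. Combining the affine reflection $s_0$ through the wall $\tilde\alpha^{-1}(1)$, which carries $0$ to $\tilde\alpha^\vee\in Q^\vee$, with a suitable element of $W$, I expect to realize the edge $(v_i,v_j)$ as the $W_{\mathrm{aff}}$-image of some edge emanating from $0$, thereby identifying $v_j-v_i$ with a $W$-translate of a vertex adjacent to $0$.

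The main obstacle is precisely this case $i,j>0$: because the $W_{\mathrm{aff}}$-orbit of $0$ is $Q^\vee$ and typically misses most of the $v_k$, one cannot translate $v_i$ to $0$ directly inside $W_{\mathrm{aff}}$. Pushing through the argument must therefore combine $s_0$ with linear Weyl elements, and I expect the key combinatorial input to be the identities imposed on the $\omega_k^\vee$ and the marks $c_k$ by the extended Dynkin diagram of $G$; these should be exactly what is needed to guarantee that $v_j-v_i$ is again a vertex of the Stiefel diagram joined to $0$ by an edge.
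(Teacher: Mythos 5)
Your reduction via the transitive $W_{\mathrm{aff}}$-action is sound: writing $g=\tau_t\circ\phi$ with $\phi\in W$, the linear part $\phi$ carries $w$ to $v_j-v_i$ and preserves the Stiefel diagram with $0$ fixed, so the problem does reduce to showing that for any two vertices $v_i,v_j$ of the fundamental alcove $\bar A_0$ the difference $v_j-v_i$ is a vertex of the Stiefel diagram adjacent to $0$. Your treatment of the cases $i=0$ or $j=0$ is also fine. But the substantive case $i,j>0$ is exactly where your proposal stops: you identify it as the main obstacle and only sketch an expectation that combining $s_0$ with linear Weyl elements and some combinatorics of the marks $c_k$ and fundamental coweights $\omega_k^\vee$ ``should'' close it. That is a genuine gap, not a detail. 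In fact your proposed route is the harder one: the $W_{\mathrm{aff}}$-orbit of $0$ is $Q^\vee$, which indeed misses the vertices $\omega_k^\vee/c_k$ with $c_k>1$, so you cannot normalize $v_i$ to $0$ and some non-obvious case analysis on the extended Dynkin diagram would be needed.

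The paper avoids the fundamental alcove entirely and argues locally on walls, which is both shorter and cleaner. Since $[v,v+w]$ is an edge, it lies on a line $\ell=\theta_1^{-1}(k_1)\cap\cdots\cap\theta_{n-1}^{-1}(k_{n-1})$; being vertices, $v$ and $v+w$ each lie on one further wall transversal to $\ell$, and these can be taken of the form $\theta_n^{-1}(k_n)$ and $\theta_n^{-1}(k_n+\epsilon)$ with the \emph{same} root $\theta_n$ and $\epsilon=\pm1$ (any intermediate integer level would produce a vertex strictly between $v$ and $v+w$, contradicting adjacency). Subtracting, $w$ lies on $\theta_1^{-1}(0)\cap\cdots\cap\theta_{n-1}^{-1}(0)\cap\theta_n^{-1}(\epsilon)$, hence is a vertex joined to $0$ by an edge on the line $\theta_1^{-1}(0)\cap\cdots\cap\theta_{n-1}^{-1}(0)$. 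If you want to salvage your approach, note that this local wall argument applied to $\bar A_0$ is precisely what settles your remaining case $i,j>0$; but at that point you might as well drop the $W_{\mathrm{aff}}$-reduction and run the wall argument directly, as the paper does.
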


  \begin{proof}
    Since $v$ and $v+w$ are joined by an edge,
    \[
      \{v\}=\theta_1^{-1}(k_1)\cap\cdots\cap\theta_n^{-1}(k_n)\quad\text{and}\quad\{v+w\}=\theta_1^{-1}(k_1)\cap\cdots\cap\theta_{n-1}^{-1}(k_{n-1})\cap\theta_n^{-1}(k_n+\epsilon)
    \]
    for some roots $\theta_1,\ldots,\theta_n$ and integers $k_1,\ldots,k_n$, where $\epsilon=\pm 1$. Then
    \[
      \{w\}=\theta_1^{-1}(0)\cap\cdots\cap\theta_{n-1}^{-1}(0)\cap\theta_n^{-1}(\epsilon)
    \]
    which is a vertex of the Stiefel diagram. Moreover, since $\epsilon=\pm 1$, $0$ and $w$ are joined by an edge on the line $\theta_1^{-1}(0)\cap\cdots\cap\theta_{n-1}^{-1}(0)$, completing the proof.
  \end{proof}

  Each connected component of the complement of the Stiefel diagram is called a \emph{Weyl alcove}. Since $G$ is simple and of rank $n$, the closure of each Weyl alcove is an $n$-simplex. Let $\alpha_1,\ldots,\alpha_n$ be simple roots and $\widetilde{\alpha}$ be the highest root of $G$, and let $\t$ be the Lie algebra of $T$. We shall consider one of those $n$-simplices
  \[
    \Delta=\{x\in\mathfrak{t}\mid \alpha_1(x)\ge 0,\ldots,\alpha_n(x)\ge 0,\,\widetilde{\alpha}(x)\le 1\}.
  \]
  Let $L$ denote the group generated by coroot shifts. Since $G$ is simply-connected, $L$ is identified with the integer lattice of $\t$. The affine Weyl group of $G$ is defined by
  \[
    W_\mathrm{aff}=W\rtimes L.
  \]
  Then $W_\mathrm{aff}$ acts on $\t$. Since this action fixes the Stiefel diagram, $W_\mathrm{aff}$ permutes Weyl alcoves. By \cite[Theorem 4.5, Part I]{Hu}, we have:

  \begin{lemma}
    \label{alcove permutation}
    The affine Weyl group $W_\mathrm{aff}$ permutes Weyl alcoves of $G$ simply transitively.
  \end{lemma}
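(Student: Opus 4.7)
The plan is to adopt the standard strategy for affine reflection groups: prove that $\overline{\Delta}$ is a strict fundamental domain for $W_\mathrm{aff}$ on $\mathfrak{t}$. The statement then splits into transitivity and freeness.

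For transitivity, I would run a wall-crossing argument. Given any alcove $A$, fix interior points $v_0 \in \Int(\Delta)$ and $v \in \Int(A)$ and connect them by a line segment. After a small generic perturbation of $v_0$, the segment crosses only finitely many walls $H_1, \ldots, H_k$ of the Stiefel diagram, transversally and only at interior points of walls, producing a chain of alcoves $\Delta = A_0, A_1, \ldots, A_k = A$ in which $A_{i-1}$ and $A_i$ share the wall $H_i$. Each such $H_i$ has the form $\alpha^{-1}(j)$, and its reflection $s_i(x) = s_\alpha(x) + j\alpha^\vee$ lies in $W_\mathrm{aff}$, since it equals the composition of translation by $j\alpha^\vee \in L$ with the Weyl reflection $s_\alpha$. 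Because $s_i$ swaps $A_{i-1}$ and $A_i$, an induction gives $A_i = s_i s_{i-1}\cdots s_1 \Delta$; in particular $A = (s_k\cdots s_1)\Delta$.

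For freeness, suppose $w \cdot \Delta = \Delta$ for some $w \in W_\mathrm{aff}$. Applying $w$ to interiors yields $w \cdot \Int(\Delta) = \Int(\Delta) \ne \emptyset$. Granting the strict fundamental domain property that distinct $W_\mathrm{aff}$-translates of $\Int(\Delta)$ are disjoint, this forces $w = 1$. The main technical obstacle is establishing this strict fundamental domain property: it is a classical fact for affine reflection groups, provable by induction on the length of $w$ as a product of the reflections $s_1, \ldots, s_n$ in the walls $\alpha_i^{-1}(0)$ together with the reflection $s_0$ in $\widetilde{\alpha}^{-1}(1)$, using the exchange condition from Coxeter group theory. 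One first shows that $W_\mathrm{aff}$ is generated by $s_0, s_1, \ldots, s_n$ (which follows from the transitivity step by conjugating the reflection in any wall back to some $s_i$) and then applies the standard Tits argument, as is carried out in the cited theorem of \cite{Hu}.
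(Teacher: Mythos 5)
Your proposal is correct and follows the standard argument found in Humphreys; the paper itself does not reproduce that argument but simply cites \cite[Theorem 4.5, Part I]{Hu}, which is exactly the result you are sketching (wall-crossing for transitivity, strict fundamental domain via the Coxeter/Tits machinery for freeness). Since you too ultimately defer to \cite{Hu} for the strict fundamental domain step, the two proofs coincide in substance.
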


  Let $\mathcal{P}$ be the union of all closures of Weyl alcoves around the origin. Then $\mathcal{P}$ is a simplicial convex $n$-polytope.

  \begin{lemma}
    \label{P face}
    If $\sigma$ is a face of $\mathcal{P}$ such that $\sigma+a$ is also a face of $\mathcal{P}$ for some $0\ne a\in L$, then both $\sigma$ and $\sigma+a$ must be faces of the boundary of $\mathcal{P}$.
  \end{lemma}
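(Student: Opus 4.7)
The plan is to reduce the lemma to two auxiliary statements: (A) a face $\sigma$ of $\mathcal{P}$ lies in $\partial\mathcal{P}$ if and only if $0\notin\sigma$; and (B) $L\cap\mathcal{P}=\{0\}$. Once these are established, the lemma is immediate: if $\sigma$ is not contained in $\partial\mathcal{P}$, then $0\in\sigma$ by (A), so $a=0+a\in\sigma+a\subseteq\mathcal{P}$, and (B) forces $a=0$, contradicting the hypothesis. The symmetric argument (replacing $\sigma$ by $\sigma+a$ and $a$ by $-a$) shows that $\sigma+a$ must likewise lie on $\partial\mathcal{P}$.

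To prove (A), observe that $\mathcal{P}$ covers a neighborhood of $0$ in $\t$, so $0\in\mathrm{int}(\mathcal{P})$. Any face of the convex polytope $\mathcal{P}$ containing $0$ therefore meets $\mathrm{int}(\mathcal{P})$ and cannot lie in $\partial\mathcal{P}$. Conversely, suppose $0\notin\sigma$, and write $\sigma$ as a face of $\overline{A}$ for some alcove $A\subseteq\mathcal{P}$; since $0$ is a vertex of $\overline{A}$, $\sigma$ lies in the unique facet $F$ of $\overline{A}$ opposite $0$. The facet $F$ is shared by exactly one other alcove $A'$, which lies on the side of $F$ away from $0$. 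Thus $A'$ does not have $0$ as a vertex, so $A'\not\subseteq\mathcal{P}$, giving $F\subseteq\partial\mathcal{P}$, and hence $\sigma\subseteq\partial\mathcal{P}$.

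For (B), Lemma \ref{alcove permutation} says $W_\mathrm{aff}=W\rtimes L$ acts simply transitively on alcoves. Since $W$ fixes $0$, it acts simply transitively on the alcoves around $0$, so $\mathcal{P}=\bigcup_{w\in W}w\cdot\overline{\Delta}$. Because $\overline{\Delta}$ is a fundamental domain for $W_\mathrm{aff}$ and the $W_\mathrm{aff}$-orbit of $0$ equals $L$ (as $W$ fixes $0$), we obtain $L\cap\overline{\Delta}=\{0\}$. By the $W$-invariance of $L$, $L\cap\mathcal{P}=\bigcup_{w\in W}w(L\cap\overline{\Delta})=\{0\}$. The most delicate point is the converse direction of (A): one must verify that the alcove $A'$ across the facet $F$ really does not have $0$ as a vertex, which holds because $0$ lies strictly on the $A$-side of the affine hyperplane spanned by $F$ while $A'$ lies strictly on the opposite side.
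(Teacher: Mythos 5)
Your proof is correct and follows essentially the same route as the paper's (very terse) argument: both amount to showing that a face of $\mathcal{P}$ avoids $\partial\mathcal{P}$ exactly when it contains the origin, and that $L\cap\mathcal{P}=\{0\}$, with both facts ultimately resting on the simple transitivity of $W_{\mathrm{aff}}$ on alcoves from Lemma \ref{alcove permutation}. You have simply spelled out as explicit claims (A) and (B) what the paper leaves implicit in its one-line proof.
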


  \begin{proof}
    Since $W$ permutes Weyl chambers simply transitively, it follows from Lemma \ref{alcove permutation} that $\sigma$ and $\sigma+a$ must not include the vertex $0\in\mathcal{P}$, completing the proof.
  \end{proof}

  By Lemma \ref{P face}, we can define
  \[
    \mathcal{Q}=\mathcal{P}/\sim
  \]
  where $\sigma\sim\sigma+a$ if $\sigma$ and $\sigma+a$ are faces of the boundary of $P$ for some $a\in L$. Clearly, the inclusion $\mathcal{P}\to\t$ induces a homeomorphism
  \begin{equation}
    \label{homeo Q}
    \mathcal{Q}\xrightarrow{\cong}\t/L.
  \end{equation}
  Since $G$ is simply-connected, $L$ is the integer lattice of $\t$, so a torus $\t/L$ coincides with a maximal torus $T$. Then we get:

  \begin{proposition}
    \label{triangulation}
    The homeomorphism \eqref{homeo Q} is a $W$-equivariant triangulation of $T$.
  \end{proposition}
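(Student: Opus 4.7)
The plan is to establish three claims in sequence: that $\mathcal{Q}$ inherits a simplicial complex structure from $\mathcal{P}$, that this structure makes \eqref{homeo Q} a triangulation, and that the $W$-action on $T$ lifts to a simplicial action on $\mathcal{Q}$ under this identification.

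First, I will recall that $\mathcal{P}$ itself is naturally a simplicial complex, as already noted in the excerpt: its top-dimensional cells are the closures of Weyl alcoves around $0$, each of which is an $n$-simplex. The quotient $\mathcal{Q}=\mathcal{P}/\sim$ identifies boundary faces $\sigma$ and $\sigma+a$ with $a\in L$. The delicate point is that translation by $a$ must restrict to a simplicial isomorphism $\sigma\to\sigma+a$. Vertex preservation is immediate, since translation by $a\in L$ sends vertices of the Stiefel diagram to vertices. Edge preservation is precisely the content of Lemma \ref{Stiefel diagram}: if $v$ and $v+w$ are vertices of $\sigma$ joined by an edge of the Stiefel diagram, then $w$ is a vertex of the Stiefel diagram joined to $0$ by an edge, and hence so are $v+a$ and $v+a+w$ in $\sigma+a$. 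Since each Weyl alcove is an $n$-simplex, any higher-dimensional face is determined by its vertex set, so this suffices to conclude that $\sim$ is simplicial and that $\mathcal{Q}$ is a simplicial complex.

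Second, the homeomorphism \eqref{homeo Q} is already granted, so combining it with the simplicial structure on $\mathcal{Q}$ from the first step exhibits it as a triangulation of $T=\t/L$. Third, for $W$-equivariance I will use that $W$ acts on $\t$ fixing the origin, and by Lemma \ref{alcove permutation} applied to the isotropy subgroup of $0$ in $W_{\mathrm{aff}}$, which coincides with $W$, it permutes the Weyl alcoves adjacent to $0$ simply transitively. Hence $W$ acts simplicially on $\mathcal{P}$. Since $W$ stabilizes the coroot lattice $L$, the relation $\sim$ is $W$-invariant, so the action descends to a simplicial $W$-action on $\mathcal{Q}$. Under \eqref{homeo Q}, this action corresponds tautologically to the standard $W$-action on $T=\t/L$ induced from the $W$-action on $\t$.

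The main obstacle is the first step, which is where the nontrivial combinatorics lies; Lemma \ref{Stiefel diagram} is designed precisely to handle it by reducing simpliciality of $\sim$ to the translation-invariance of the edge structure of the Stiefel diagram at the origin. Once the gluing is known to be simplicial, the remaining two steps are essentially formal consequences of the setup.
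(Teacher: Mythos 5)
Your argument has a genuine gap. You have correctly identified the issue as showing that $\mathcal{Q}=\mathcal{P}/\sim$ is a simplicial complex, but the condition you verify---that the gluing maps $\sigma\to\sigma+a$ are simplicial isomorphisms---is not sufficient, and is in any case essentially automatic: translation by $a\in L$ sends the Stiefel diagram to itself, so it carries vertices to vertices, edges to edges, and faces to faces without any appeal to Lemma \ref{Stiefel diagram}. Your invocation of Lemma \ref{Stiefel diagram} to establish ``edge preservation'' is circular---you deduce that $v+a$ and $v+a+w$ are joined by an edge, but that follows immediately from translation invariance, and Lemma \ref{Stiefel diagram} is neither needed nor is its conclusion ($w$ is adjacent to $0$) actually being used.

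The condition that is genuinely needed, and that the paper verifies, is of a different nature: a quotient of a simplicial complex by simplicial identifications can fail to be a simplicial complex if two vertices of a single simplex become identified (producing a loop, or more generally a non-injective attaching map on a simplex). Concretely, one must rule out the existence of a vertex $v$ of $\partial\mathcal{P}$ such that $v$ and $v+a$ are joined by an edge of $\mathcal{P}$ for some $0\ne a\in L$, for otherwise that edge would collapse to a loop in $\mathcal{Q}$. This is precisely where Lemma \ref{Stiefel diagram} enters: if $v$ and $v+a$ were joined by an edge, then $a$ would be a vertex of the Stiefel diagram joined to $0$ by an edge, which cannot happen for a nonzero element of the coroot lattice $L$. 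Your proof never addresses this possibility, so the conclusion that $\mathcal{Q}$ is a simplicial complex does not follow from what you have shown. The $W$-equivariance discussion is fine but, as the paper notes, is the easy part.
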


  \begin{proof}
    The homeomorphism \eqref{homeo Q} is obviously $W$-equivariant, so it remains to prove $\mathcal{Q}$ is a simplicial complex. It suffices to show that there is no vertex $v$ of the boundary of $\mathcal{P}$ such that $v$ and $v+a$ are joined by an edge of $\mathcal{P}$ for $0\ne a\in L$. This has been already proved in Lemma \ref{Stiefel diagram}.
  \end{proof}

  \begin{proposition}
    \label{T/W}
    The quotient space $T/W$ is naturally identified with $\Delta$.
  \end{proposition}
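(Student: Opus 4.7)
The plan is to identify $T/W$ with the single quotient $\mathfrak{t}/W_\mathrm{aff}$ and then invoke the classical fact that the closed Weyl alcove $\Delta$ is a strict fundamental domain for the action of $W_\mathrm{aff}$ on $\mathfrak{t}$. First I would observe that since $G$ is simply-connected, $L$ is the integer lattice of $\mathfrak{t}$, so $T = \mathfrak{t}/L$ and the $W$-action on $T$ lifts to the $W_\mathrm{aff} = W \rtimes L$-action on $\mathfrak{t}$; consequently $T/W$ is naturally homeomorphic to $\mathfrak{t}/W_\mathrm{aff}$.

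Next, I would consider the composite
\[
  \Delta \hookrightarrow \mathfrak{t} \twoheadrightarrow \mathfrak{t}/W_\mathrm{aff}
\]
and show it is a homeomorphism. Continuity is immediate. For surjectivity, every $x \in \mathfrak{t}$ lies in the closure of some Weyl alcove, and by Lemma \ref{alcove permutation} some element of $W_\mathrm{aff}$ carries that alcove to $\Delta$; hence every $W_\mathrm{aff}$-orbit meets $\Delta$. For injectivity, suppose $x, y \in \Delta$ satisfy $y = w \cdot x$ with $w \in W_\mathrm{aff}$. Choose alcoves $A, A'$ whose closures contain $x$ and $y$ respectively; by Lemma \ref{alcove permutation} there is a unique $w' \in W_\mathrm{aff}$ with $w'(A) = A'$, and one sees that $w'$ fixes $x$ (so $x = y$) by decomposing $w'$ as a product of reflections across walls successively crossed by a minimal gallery from $A$ to $A'$, each of which must contain $x$ since $x \in \overline{\Delta} \cap \overline{w^{-1}\Delta}$ forces the intermediate walls to pass through $x$.

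Finally, since $\Delta$ is compact and $\mathfrak{t}/W_\mathrm{aff}$ is Hausdorff, this continuous bijection is automatically a homeomorphism, which gives $T/W \cong \Delta$ naturally.

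The main obstacle is the injectivity step, i.e.\ the claim that $\Delta$ meets each $W_\mathrm{aff}$-orbit in exactly one point. This is a standard consequence of the theory of affine reflection groups \cite[Theorem 4.8, Part I]{Hu}, so in practice I would simply cite it; a self-contained argument requires the stabilizer description above together with the simple transitivity of Lemma \ref{alcove permutation}, and these are really the only inputs needed beyond what has already been established.
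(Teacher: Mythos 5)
Your proposal takes essentially the same route as the paper: the paper's entire proof is a citation to \cite[Theorem 4.8, Part I]{Hu}, and you arrive at the same citation after spelling out the (implicit) reduction $T/W \cong \mathfrak{t}/W_{\mathrm{aff}}$ and why the fundamental-alcove theorem then finishes it. The only quibble is that your self-contained injectivity sketch is loose — the $w'$ carrying an arbitrary alcove $A$ to $A'$ need not relate $x$ to $y$, and one should instead argue with $\Delta$ and $w^{-1}\Delta$ directly — but since you explicitly defer to Humphreys for that step, this does not affect the correctness of the argument as you present it.
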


  \begin{proof}
    This follows from \cite[Theorem 4.8, Part I]{Hu}.
  \end{proof}

  A maximal torus $T$ will always be equipped with a $W$-equivariant triangulation \eqref{homeo Q}. We consider objects related to the triangulation of $T$. Let $\sigma$ be a face of $T$. Then there is a face $\widetilde{\sigma}$ of $\mathcal{Q}$ which is mapped onto $\sigma$. We can associate to $\widetilde{\sigma}$ roots corresponding to walls including $\widetilde{\sigma}$. Lifts of $\sigma$ are related by translation by $L$, so that the associated roots are equal. Then we can associate roots to $\sigma$. Let $\Phi(\sigma)$ denote the set of roots associated to $\sigma$. Clearly, we have:

  \begin{lemma}
    \label{Phi}
    If faces $\sigma,\tau$ of $T$ satisfy $\sigma<\tau$, then
    \[
      \Phi(\sigma)\supset\Phi(\tau).
    \]
  \end{lemma}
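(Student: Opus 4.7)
The plan is to unwind the definitions and reduce everything to the obvious geometric fact that any wall containing a simplex automatically contains its faces. Let $\sigma<\tau$ be faces of $T$. First I would lift $\tau$ to a face $\widetilde\tau$ of $\mathcal{P}$, using the quotient map $\mathcal{P}\to\mathcal{Q}\xrightarrow{\cong}T$ from \eqref{homeo Q}.

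Since the map $\mathcal{P}\to T$ is simplicial (the boundary identifications defining $\mathcal{Q}$ respect the simplicial structure, by Proposition \ref{triangulation}), the face relation lifts: there is a face $\widetilde\sigma$ of $\widetilde\tau$ in $\mathcal{P}$ whose image in $T$ is $\sigma$. In particular $\widetilde\sigma\subset\overline{\widetilde\tau}$ inside $\t$.

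Now recall that $\Phi(\tau)$ consists of those roots $\alpha\in\Phi$ for which some wall $\alpha^{-1}(i)$ contains $\widetilde\tau$, and similarly for $\Phi(\sigma)$, and these sets are independent of the choice of lift because $L$-translation changes the integer $i$ but not the root $\alpha$. If $\alpha\in\Phi(\tau)$, say $\widetilde\tau\subset\alpha^{-1}(i)$, then $\widetilde\sigma\subset\overline{\widetilde\tau}\subset\alpha^{-1}(i)$ (walls are closed affine hyperplanes), so $\alpha\in\Phi(\sigma)$. Hence $\Phi(\sigma)\supset\Phi(\tau)$.

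There is no real obstacle here: the only thing one must check carefully is that different lifts of the same face of $T$ give the same set of associated roots, which is precisely the translation-invariance observation made just before the lemma. Everything else is the tautology that an inclusion $\widetilde\sigma\subset\overline{\widetilde\tau}$ forces every wall containing $\widetilde\tau$ to contain $\widetilde\sigma$.
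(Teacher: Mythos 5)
Your proof is correct. The paper offers no argument for this lemma — it is introduced with ``Clearly, we have:'' — and your write-up is precisely the routine unwinding of the definitions that the authors treat as obvious: lift $\tau$ to $\mathcal{P}$, take the corresponding subface $\widetilde\sigma\subset\overline{\widetilde\tau}$, and observe that any wall containing $\widetilde\tau$ (being a closed affine hyperplane) also contains $\widetilde\sigma$, with the translation-invariance remark just before the lemma guaranteeing the answer is independent of the chosen lift.
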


  Let $\sigma$ be a face of $T$. We define two groups associated to $\sigma$. Let $W(\sigma)$ be a subgroup of $W$ generated by reflections corresponding to roots in $\Phi(\sigma)$, and let
  \[
    Z(\sigma)=\{x\in G\mid xy=yx\text{ for each }y\in\sigma\}.
  \]
  We have $W(\sigma)=1$ and $Z(\sigma)=T$ for $\dim\sigma=n$. Notice that since we may assume $\Delta$ is a face of $T$, we can consider the groups $W(\sigma)$ and $Z(\sigma)$ for a face $\sigma$ of $\Delta$.

  \begin{lemma}
    \label{W Z}
    If faces $\sigma,\tau$ of $T$ satisfy $\sigma<\tau$, then
    \[
      W(\sigma)>W(\tau)\quad\text{and}\quad Z(\sigma)>Z(\tau).
    \]
  \end{lemma}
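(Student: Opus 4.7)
The plan is to handle both containments in parallel, since each reduces to a single strengthening of Lemma \ref{Phi}. The easy $\supseteq$ inclusions are immediate: the generators defining $W(\tau)$ sit among those defining $W(\sigma)$, and because $\sigma \subseteq \overline{\tau}$, any element of $G$ centralizing $\tau$ automatically centralizes $\sigma$. So the real content is strictness, and I will reduce it to the claim $\Phi(\sigma) \supsetneq \Phi(\tau)$.

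To strengthen Lemma \ref{Phi}, I will observe that for any face $\rho$ of $T$, a lift $\widetilde{\rho}$ is connected and each root is affine, so $\widetilde\rho$ lies in at most one wall $\alpha^{-1}(i)$ per root $\alpha$; hence the walls of the Stiefel diagram containing $\widetilde\rho$ are in bijection with $\Phi(\rho)$. Combining this with the structural fact recalled just before Lemma \ref{Stiefel diagram} that a $k$-face lies in exactly $n-k$ walls yields $|\Phi(\rho)| = n - \dim\rho$. Since $\dim\sigma < \dim\tau$, the containment of Lemma \ref{Phi} must be strict.

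Fix some $\alpha \in \Phi(\sigma) \setminus \Phi(\tau)$. For the Weyl group, I will show the reflection $s_\alpha$ witnesses $W(\sigma) \supsetneq W(\tau)$. Membership $s_\alpha \in W(\sigma)$ is by definition. To exclude $s_\alpha$ from $W(\tau)$, I will first check that every element of $W(\tau)$ fixes $\tau$ pointwise in $T$: for a generator $s_\beta$ with $\beta \in \Phi(\tau)$ and any lift $\widetilde x \in \widetilde\tau$, $\beta(\widetilde x)$ is a fixed integer $k$, so $s_\beta(\widetilde x) = \widetilde x - k\beta^\vee \equiv \widetilde x \pmod{L}$. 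Then I will argue by contradiction: if $s_\alpha$ also fixed $\tau$ pointwise in $T$, the relation $\alpha(\widetilde x)\alpha^\vee \in L$ would hold throughout $\widetilde\tau$; since $G$ is simply-connected and simple, $\alpha^\vee$ is primitive in $L$, forcing $\alpha(\widetilde x) \in \Z$ and, by continuity, $\alpha \in \Phi(\tau)$, a contradiction. The appeal to primitivity of $\alpha^\vee$ in $L$ is the most delicate step; this is where simple-connectedness of $G$ enters crucially, and will be the main obstacle.

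For the centralizer, let $K_\alpha \subseteq G$ denote the rank-one connected subgroup attached to $\alpha$. Because $T$ acts on the root space $\mathfrak{g}_\alpha$ by the character $\alpha$, an element of $T$ centralizes all of $K_\alpha$ precisely when it lies in $\ker(\alpha\colon T \to S^1)$, i.e., in the image under $\exp$ of $\alpha^{-1}(\Z)$. Since $\alpha \in \Phi(\sigma)$, the face $\sigma$ lies in this kernel and so $K_\alpha \subseteq Z(\sigma)$; since $\alpha \notin \Phi(\tau)$, some point of $\tau$ lies outside the kernel, so $K_\alpha \not\subseteq Z(\tau)$, and $Z(\sigma) \supsetneq Z(\tau)$ follows.
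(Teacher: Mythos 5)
Your proof is correct, and it takes a genuinely different route from the paper's two-sentence argument in both halves. For the Weyl-group containment the paper just cites Lemma \ref{Phi} and leaves the strictness implicit; the elided step is that $\Phi(\tau)$ is a \emph{closed} root subsystem (if $\alpha,\beta$ take constant integer values on $\widetilde\tau$ then so does $\alpha+\beta$ whenever it is a root), so the reflections lying in $W(\tau)$ are exactly those indexed by $\Phi(\tau)$, and $s_\alpha\notin W(\tau)$ for $\alpha\in\Phi(\sigma)\setminus\Phi(\tau)$ is then immediate. You instead use the pointwise-fixing criterion in $T$ together with primitivity of $\alpha^\vee$; this also works, and your worry that primitivity is the ``main obstacle'' is misplaced: every coroot is $W$-conjugate to a simple coroot, hence to an element of a $\Z$-basis of $L=Q^\vee$, so primitivity is automatic once simple-connectedness identifies $L$ with the coroot lattice. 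For the centralizer the paper appeals to the structural fact that $Z(\sigma)$ is the union of all maximal tori containing $\sigma$ and again leaves strictness implicit; you instead directly exhibit the rank-one subgroup $K_\alpha$ lying in $Z(\sigma)$ but not in $Z(\tau)$, which is more explicit and self-contained. One small quibble: the correspondence you set up between walls through $\widetilde\rho$ and elements of $\Phi(\rho)$ is two-to-one rather than a bijection (both $\pm\alpha$ index the same wall), but this does not affect the strict inclusion $\Phi(\sigma)\supsetneq\Phi(\tau)$, which is all you use.
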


  \begin{proof}
    The first statement follows from Lemma \ref{Phi}. Since $Z(\sigma)$ is a union of all maximal tori including $\sigma$, the second statement is true.
  \end{proof}


  \section{Homotopy decomposition}\label{Homotopy decomposition}

  This section proves a new homotopy decomposition of $\Hom(\Z^m,G)_1$ (Theorem \ref{homotopy decomposition}). Let $\pi$ be a discrete group, and let $\Rep(\pi,G)$ be the quotient of the conjugation action of $G$ on $\Hom(\pi,G)$ as in Section \ref{Introduction}. Then we have
  \[
    \Rep(\Z,G)=\Hom(\Z,G)/G=T/W
  \]
  which is identified with $\Delta$ by Proposition \ref{T/W}. We will consider the composite
  \[
    \pi\colon\Hom(\Z^m,G)_1\to\Hom(\Z,G)_1=\Hom(\Z,G)\to\Rep(\Z,G)=\Delta,
  \]
  where the first map is the $m$-th projection and the second map is the quotient map. We aim to identify the fibers of the map $\pi$. We consider a map
  \[
    \phi\colon G/T\times T^m\to\Hom(\Z^m,G)_1,\quad(gT,t_1,\ldots,t_m)\mapsto(gt_1g^{-1},\ldots,gt_mg^{-1}).
  \]
  It is proved in \cite{Bo} that the map $\phi$ is surjective. Let the Weyl group $W$ act on $G/T\times T^m$ by
  \[
    (gT,t_1,\ldots,t_m)\cdot w=(gwT,w^{-1}t_1w,\ldots,w^{-1}t_mw)
  \]
  for $(gT,t_1,\ldots,t_m)\in G/T\times T^m$ and $w\in W$. Then the map $\phi$ is invariant under the action of $W$, and so it induces a surjective map
  \begin{equation}
    \label{phi}
    G/T\times_WT\to\Hom(\Z^m,G)_1.
  \end{equation}

  \begin{lemma}
    If $x,y\in\Delta$ belong to the interior of a common face, then
    \[
      \pi^{-1}(x)\cong\pi^{-1}(y).
    \]
  \end{lemma}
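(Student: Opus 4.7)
The plan is to construct an explicit homeomorphism $F\colon \pi^{-1}(x) \to \pi^{-1}(y)$ by swapping only the last coordinate, replacing a conjugate of $x$ by the corresponding conjugate of $y$. Let $\sigma$ denote the face of $\Delta$ whose interior contains both $x$ and $y$.

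The first step is to show $Z_G(x) = Z_G(y) = Z(\sigma)$. For a torus element $t \in T$, the centralizer $Z_G(t)$ is determined by the set of walls of the Stiefel diagram passing through $t$; any two points in the interior of $\sigma$ lie in exactly the walls containing $\sigma$, so $Z_G(x) = Z_G(y)$. Combined with the inclusions $Z(\sigma) \subseteq Z_G(x) \subseteq \bigcap_{z \in \sigma} Z_G(z) = Z(\sigma)$ (the latter because centralizers only grow as one passes from $\sigma^\circ$ toward the boundary of $\sigma$, by Lemma \ref{W Z}), this pins both down as $Z(\sigma)$.

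With this identification in hand, I would define a $G$-equivariant homeomorphism $\psi\colon G\cdot x \to G\cdot y$ between the conjugation orbits by $\psi(hxh^{-1}) = hyh^{-1}$; well-definedness follows because if $h_1 x h_1^{-1} = h_2 x h_2^{-1}$ then $h_2^{-1}h_1 \in Z_G(x) = Z_G(y)$, and both orbits identify with $G/Z(\sigma)$. The candidate map is then
\[
  F(g_1,\ldots,g_m) = (g_1,\ldots,g_{m-1},\psi(g_m)).
\]
Verifying that $F(g_1,\ldots,g_m)$ is a commuting tuple is routine: writing $g_m = hxh^{-1}$, commutativity of $g_i$ with $g_m$ is equivalent to $h^{-1}g_i h \in Z_G(x) = Z_G(y)$, which in turn gives commutativity of $g_i$ with $\psi(g_m) = hyh^{-1}$.

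The step I expect to be the main obstacle is verifying that $F$ actually lands in the identity component $\Hom(\Z^m,G)_1$ rather than merely in $\Hom(\Z^m,G)$. To handle this, I would exhibit an explicit path from $(g_1,\ldots,g_m)$ to $F(g_1,\ldots,g_m)$ inside $\Hom(\Z^m,G)$, namely $t\mapsto (g_1,\ldots,g_{m-1},h\gamma(t)h^{-1})$, where $\gamma\colon [0,1]\to \sigma^\circ$ is the straight line from $x$ to $y$ (which stays in $\sigma^\circ$ by convexity of the open face). The crucial point is that $\gamma(t)$ never leaves the interior of $\sigma$, so $Z_G(\gamma(t)) = Z(\sigma)$ for every $t$; hence $h^{-1}g_i h \in Z(\sigma)$ still commutes with $\gamma(t)$ throughout the deformation, keeping the path in $\Hom(\Z^m,G)$. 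The symmetric construction with $x$ and $y$ interchanged produces a continuous inverse of $F$, yielding the required homeomorphism $\pi^{-1}(x) \cong \pi^{-1}(y)$.
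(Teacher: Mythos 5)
Your proof is correct, and it takes a genuinely more explicit route than the paper's. The paper works upstream through the surjection $\phi\colon G/T\times T^m\to\Hom(\Z^m,G)_1$: it identifies $(\pi\circ\phi)^{-1}(x)=G/T\times T^{m-1}\times W\cdot x$ and $(\pi\circ\phi)^{-1}(y)=G/T\times T^{m-1}\times W\cdot y$ (note the composite is $\pi\circ\phi$; the paper has a typo), and then argues that because the isotropy subgroups of $(t_1,\ldots,t_{m-1},x)$ and $(t_1,\ldots,t_{m-1},y)$ under the conjugation action agree, the two quotients to $\pi^{-1}(x)$ and $\pi^{-1}(y)$ collapse in the same way, yielding a homeomorphism. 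You instead build the homeomorphism downstairs directly: the identification $Z_G(x)=Z_G(y)=Z(\sigma)$ gives $G$-equivariant homeomorphisms $G\cdot x\cong G/Z(\sigma)\cong G\cdot y$, and replacing the last coordinate by its image under $\psi$ carries commuting tuples to commuting tuples. The key input is the same in both (centralizers are constant on the relative interior of a face), but your version avoids any reasoning about quotient identifications, at the cost of an extra step: you have to check the image lands in the identity component, which you handle cleanly with the straight-line path in $\sigma^\circ$ (a detail the paper never confronts because $\phi$ has image in the identity component by construction). Both proofs are sound; yours is more elementary and self-contained, while the paper's is shorter and sits inside the $\phi$-quotient framework used throughout the rest of the section.
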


  \begin{proof}
    Suppose that $x,y\in\Delta$ belong to the interior of a common face. Consider the adjoint action of $G$ on $T^m\times\t$. Then for each $(t_1,\ldots,t_m)\in T^m$, the isotropy subgroups of $(t_1,\ldots,t_m,x)$ and $(t_1,\ldots,t_m,y)$ are equal, so that
    \[
      (\phi\circ\pi)^{-1}(x)=G/T\times T^{m-1}\times W\cdot x\quad\text{and}\quad(\phi\circ\pi)^{-1}(y)=G/T\times T^{m-1}\times W\cdot y.
    \]
    Thus by the definition of the map $\phi$, we obtain $\pi^{-1}(x)\cong\pi^{-1}(y)$, as stated.
  \end{proof}

  Let $P(\Delta)$ denote the face poset of $\Delta$, and let $\sigma_0$ denote the barycenter of a face $\sigma\in P(\Delta)$. For $\sigma\in P(\Delta)$, let $\phi_\sigma\colon G/T\times T^{m-1}\to\pi^{-1}(\sigma_0)$ denote the restriction of the quotient map \eqref{phi}. Observe that for $\sigma<\tau\in P(\Delta)$, there is a natural map $q_{\sigma,\tau}\colon\pi^{-1}(\tau_0)\to\pi^{-1}(\sigma_0)$ satisfying a commutative diagram
  \[
    \xymatrix{
    G/T\times T^{m-1}\ar[r]^(.6){\phi_\tau}\ar[d]_{\phi_\sigma}&\pi^{-1}(\tau_0)\ar[d]^{q_{\sigma,\tau}}\\
    \pi^{-1}(\sigma_0)\ar@{=}[r]&\pi^{-1}(\sigma_0).
    }
  \]
  Clearly, we have
  \[
    q_{\sigma,\tau}\circ q_{\tau,\nu}=q_{\sigma,\nu}
  \]
  for $\sigma<\tau<\nu\in P(\Delta)$. Let $\iota_{\tau,\sigma}\colon\sigma\to\tau$ denote the inclusion for $\sigma<\tau\in P(\Delta)$. Then the above observation implies that
  \begin{equation}
    \label{Hom coprod}
    \Hom(\Z^m,G)_1=\left(\coprod_{\sigma\in P(\Delta)}\pi^{-1}(\sigma_0)\times\sigma\right)/\sim
  \end{equation}
  where $(x,\iota_{\tau,\sigma}(y))\sim(q_{\sigma,\tau}(x),y)$ for $(x,y)\in\pi^{-1}(\tau_0)\times\sigma\subset\pi^{-1}(\tau_0)\times\tau$. Define a functor
  \[
    F_m\colon P(\Delta)\to\mathbf{Top},\quad\sigma\mapsto\pi^{-1}(\sigma_0)
  \]
  such that $F_m(\tau>\sigma)=q_{\sigma,\tau}$, where we understand a poset $P$ as a category by assuming an inequality $x>y\in P$ as a unique morphism $x\to y$. Thus by \eqref{Hom coprod}, we obtain:

  \begin{theorem}
    \label{hocolim}
    There is a homeomorphism
    \[
      \Hom(\Z^m,G)_1\cong\hocolim F_m.
    \]
  \end{theorem}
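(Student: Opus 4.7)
The plan is to reinterpret the explicit presentation of $\Hom(\Z^m,G)_1$ given in \eqref{Hom coprod} as a standard model of the homotopy colimit of the functor $F_m$. The substantive geometric and group-theoretic work---showing that the fibers of $\pi$ are locally constant on open faces of $\Delta$ and constructing the compatible collapse maps $q_{\sigma,\tau}$---has already been done, so the theorem itself is a categorical recognition. First, I would recall the Bousfield--Kan model of $\hocolim F_m$: its points are equivalence classes of triples $(x,c,t)$, where $c=(\tau_0\to\tau_1\to\cdots\to\tau_n)$ is a chain of morphisms in $P(\Delta)$, $x\in F_m(\tau_0)$, and $t\in\Delta^n$, modulo the standard face and degeneracy identifications. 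Since arrows in $P(\Delta)$ go from larger to smaller faces, a non-degenerate chain is precisely a strictly decreasing sequence $\tau_0>\tau_1>\cdots>\tau_n$ of faces of $\Delta$.

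Next, I would translate between combinatorics and geometry: to a non-degenerate chain $\tau_0>\cdots>\tau_n$ I associate the affine simplex $s_c=[(\tau_0)_0,(\tau_1)_0,\ldots,(\tau_n)_0]$, which lies in the barycentric subdivision of $\Delta$ and, since every $(\tau_i)_0$ lies in $\tau_i\subset\tau_0$, sits inside $\tau_0$. As the chain varies with fixed top element $\tau_0$, the simplices $s_c$ tile $\tau_0$; as $\tau_0$ itself varies, they tile $\Delta$. Using this, I would define the comparison map $\Phi\colon \hocolim F_m\to\bigl(\coprod_{\sigma\in P(\Delta)}F_m(\sigma)\times\sigma\bigr)/\sim$ by sending $(x,c,t)\in F_m(\tau_0)\times\Delta^n$ to the class of $(x,\mathrm{aff}_c(t))$, where $\mathrm{aff}_c\colon\Delta^n\to s_c$ is the canonical affine isomorphism. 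The face operator dropping the initial vertex of $c$ changes the top element from $\tau_0$ to $\tau_1$ and forces the replacement $x\mapsto q_{\tau_1,\tau_0}(x)$; this is exactly the identification $(x,\iota_{\tau_0,\tau_1}(y))\sim(q_{\tau_1,\tau_0}(x),y)$ of \eqref{Hom coprod}. The remaining face operators drop a $(\tau_i)_0$ with $i>0$, so $s_c$ restricts to a face still inside $\tau_0$ and the identification becomes tautological on $F_m(\tau_0)\times\tau_0$; degeneracies are handled in the usual way. Hence $\Phi$ descends to a continuous bijection, and the tiling observation together with the quotient-topology characterization of both sides upgrades it to a homeomorphism. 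Combining with \eqref{Hom coprod} then yields the theorem.

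The main obstacle is the bookkeeping in the comparison step: matching the simplicial face and degeneracy identifications of the Bousfield--Kan construction to the single gluing relation of \eqref{Hom coprod}, while keeping track of variances, since arrows in $P(\Delta)$ run opposite to face inclusions and $F_m$ sends them covariantly to the restriction maps $q_{\sigma,\tau}$.
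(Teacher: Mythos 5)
Your argument is correct and takes essentially the same route as the paper. The paper establishes the quotient presentation \eqref{Hom coprod} and then deduces Theorem \ref{hocolim} from it in a single line; what you have done is spell out that final inference---the identification of the right-hand side of \eqref{Hom coprod} with the Bousfield--Kan model of $\hocolim F_m$ via the barycentric subdivision of $\Delta$---which the paper treats as immediate.
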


  We further look into $F_m(\sigma)$ for $\sigma\in P(\Delta)$. Let $P(X)$ denote the face poset of a regular CW complex $X$, and recall that $T$ is equipped with the triangulation \eqref{homeo Q}. Let $\sigma$ be a face of $\Delta$. For $\tau=\tau_1\times\cdots\times\tau_{m-1}\in P(T^{m-1})$, let $Z(\tau)=Z(\tau_1)\cap\cdots\cap Z(\tau_{m-1})$. For $\tau<\mu\in P(T^{m-1})$, let $\iota_{\mu,\tau}\colon\tau\to\mu$ denote the inclusion, and let
  \[
    q_{\tau,\mu}^\sigma\colon G/Z(\mu)\cap Z(\sigma)\to G/Z(\tau)\cap Z(\sigma)
  \]
  be the natural projection. Then we have
  \[
    \pi^{-1}(\sigma_0)=\left(\left(\coprod_{\tau\in P(T^{m-1})}G/Z(\tau)\cap Z(\sigma)\times\tau\right)/\sim\right)/W(\sigma)
  \]
  where $(x,\iota_{\mu,\tau}(y))\sim(q_{\tau,\mu}^\sigma(x),y)$ for $(x,y)\in G/Z(\mu)\cap Z(\sigma)\times\tau\subset G/Z(\mu)\cap Z(\sigma)\times\mu$ and the quotient by $W(\sigma)$ is taken by the action of $W$ on $G/T\times T^{m-1}$. Now we define a functor
  \[
    F_m^\sigma\colon P(T^{m-1})\to\mathbf{Top},\quad\tau\mapsto G/Z(\tau)\cap Z(\sigma)
  \]
  where $F_m^\sigma(\mu>\tau)$ is the projection $q^\sigma_{\tau,\mu}$. Then we obtain:

  \begin{proposition}
    \label{fiber hocolim}
    For $\sigma\in P(\Delta)$, there is a natural homeomorphism
    \[
      F_m(\sigma)\cong(\hocolim F_m^\sigma)/W(\sigma).
    \]
  \end{proposition}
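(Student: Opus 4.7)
The plan is to deduce the proposition from the gluing description of $\pi^{-1}(\sigma_0)$ displayed just above its statement, by recognizing the inner assembly as $\hocolim F_m^\sigma$ in exactly the manner of Theorem \ref{hocolim}.

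First I would verify that displayed formula by a stratum-wise analysis of $\phi_\sigma$: fix a lift of $\sigma_0$ to a point $s$ in the interior of $\sigma \subset \Delta \subset T$, and for each face $\tau = \tau_1 \times \cdots \times \tau_{m-1} \in P(T^{m-1})$ consider the restriction of $\phi$ to $G/T \times \tau^\circ \times \{s\}$, where $\tau^\circ$ denotes the interior. For any $(t_1,\ldots,t_{m-1})$ in $\tau^\circ$, the $G$-isotropy of the tuple $(t_1,\ldots,t_{m-1},s)$ under simultaneous conjugation is exactly $Z(\tau)\cap Z(\sigma)$, so this stratum of $\phi_\sigma$ factors through $G/(Z(\tau)\cap Z(\sigma))\times\tau^\circ$. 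The $W$-invariance of $\phi$ restricted over $\sigma_0$ collapses to the action of the stabilizer $W(\sigma)$ of $\sigma_0$, giving the outer quotient. Across face inclusions $\tau<\mu$ the strata glue by the natural projections $q^\sigma_{\tau,\mu}$ coming from the centralizer containment $Z(\mu)\cap Z(\sigma)\subset Z(\tau)\cap Z(\sigma)$ (Lemma \ref{W Z}), so the strata assemble into the displayed formula.

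With that in hand, the inner gluing is of the same shape as the one exploited in the proof of Theorem \ref{hocolim}: for a covariant functor $F$ from the face poset of a regular CW complex $X$ to $\mathbf{Top}$, the cellular identification space $(\coprod_\tau F(\tau)\times\tau)/\sim$ is a model of $\hocolim F$. Here $X=T^{m-1}$, which inherits a regular product-cell structure from the triangulation of $T$ given by Proposition \ref{triangulation}, and $F=F_m^\sigma$, so the inner gluing is precisely $\hocolim F_m^\sigma$. The outer $W(\sigma)$-quotient is manifestly compatible with this identification, since $W(\sigma)$ acts on each $F_m^\sigma(\tau)$ through left multiplication on $G$ and on the indexing faces through its action on $T^{m-1}$, and these are intertwined by the $q^\sigma_{\tau,\mu}$. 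This yields the desired homeomorphism $F_m(\sigma)\cong(\hocolim F_m^\sigma)/W(\sigma)$.

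The main technical step is the isotropy calculation above: that the centralizer in $G$ of a point in the interior of a face $\tau$ of $T$ depends only on $\tau$ and equals $Z(\tau)$, and, iterated, that the centralizer of a tuple in the interior of a product face is the intersection of the individual centralizers. This is the Lie-theoretic content built into Section \ref{Triangulation of a maximal torus}; once it is in place, the rest of the argument is a formal unwinding of the hocolim definition, entirely parallel to the passage from \eqref{Hom coprod} to Theorem \ref{hocolim}.
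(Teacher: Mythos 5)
Your proposal matches the paper's approach exactly. The paper sets up the displayed gluing formula for $\pi^{-1}(\sigma_0)$ and reads off the homeomorphism from the definition of $F_m^\sigma$ without spelling out the isotropy computation; your stratum-wise analysis and the observation that the inner gluing is the standard hocolim model for a functor on a face poset are precisely the details the paper leaves implicit, and your check that the $W(\sigma)$-action (right translation on $G/(Z(\tau)\cap Z(\sigma))$ together with the permutation of indexing faces) is compatible with the identifications completes the argument in the intended way.
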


  Hereafter, we let
  \[
    q_m=d+n(m-2).
  \]
  Since the maximal dimension of $F_m^\sigma(\tau)$ is $q_m$, we get:

  \begin{corollary}
    \label{dim fiber}
    For each $\sigma\in P(\Delta)$, $\dim F_m(\sigma)=q_m$.
  \end{corollary}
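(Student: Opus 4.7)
The plan is to read the dimension directly off the explicit decomposition of $F_m(\sigma)=\pi^{-1}(\sigma_0)$ displayed just before Proposition \ref{fiber hocolim}. Since $W(\sigma)$ is a finite group, the quotient preserves dimension, so it is enough to bound and then realize the maximum over $\tau\in P(T^{m-1})$ of $\dim\bigl(G/(Z(\tau)\cap Z(\sigma))\times\tau\bigr)$.

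For the upper bound, the key observation I would record first is that $T\subset Z(\tau)\cap Z(\sigma)$ for every $\tau$ and $\sigma$. This is immediate from abelianness of $T$: each $\tau_i$ is a face of $T$ and is therefore a subset of $T$, while $\sigma\subset\Delta\subset T$ under the triangulation \eqref{homeo Q}, so every element of $T$ centralizes both $\sigma$ and each $\tau_i$. Hence $\dim(Z(\tau)\cap Z(\sigma))\ge n$, giving $\dim G/(Z(\tau)\cap Z(\sigma))\le d-n$. Combining this with the trivial bound $\dim\tau\le n(m-1)$ yields
\[
\dim\bigl(G/(Z(\tau)\cap Z(\sigma))\times\tau\bigr)\le (d-n)+n(m-1)=q_m.
\]
For the matching lower bound I would choose $\tau=\tau_1\times\cdots\times\tau_{m-1}$ with each $\tau_i$ a top-dimensional face of $T$, i.e.\ a Weyl alcove. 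Any interior point of an alcove is a regular element of $T$, whose centralizer in $G$ is exactly $T$; thus $Z(\tau_i)=T$ for each $i$, whence $Z(\tau)\cap Z(\sigma)=T$, and the corresponding piece $G/T\times\tau$ attains dimension exactly $(d-n)+n(m-1)=q_m$.

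I do not anticipate a serious obstacle. The whole argument is a dimension count; the only two ingredients are the inclusion $T\subset Z(\tau)\cap Z(\sigma)$ (abelianness of $T$) and the identification of the centralizer of a regular element with $T$, which is recorded in the excerpt itself as the base case $Z(\mu)=T$ for $\dim\mu=n$ just before Lemma \ref{W Z}.
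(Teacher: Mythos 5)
Your proof is correct and fills in exactly the dimension count that the paper compresses into the single observation ``the maximal dimension of $F_m^\sigma(\tau)$ is $q_m$'' preceding the corollary. Both arguments rest on the same two facts you identify — $T\subset Z(\tau)\cap Z(\sigma)$ (abelianness of $T$ plus $\sigma,\tau_i\subset T$) for the upper bound, and $Z(\tau)\cap Z(\sigma)=T$ when each $\tau_i$ is top-dimensional for the lower bound — so this is the same approach, merely written out in full.
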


  \begin{example}
    \label{example hocolim}
    We examine Theorem \ref{hocolim} in the $G=SU(2)$ case. Since $\rank SU(2)=1$, $\Delta$ is a 1-simplex. Let $v_0,v_1$ be vertices of $\Delta$, and let $e$ be an edge of $\Delta$. Since $G=SU(2)$, $\{v_0,v_1\}$ corresponds to the center, so we have
    \[
      F_m(v_i)=\Hom(\Z^{m-1},SU(2))_1
    \]
    for $i=0,1$. By Proposition \ref{fiber hocolim}, we also have
    \[
      F_m(e)=SU(2)/T\times T^{m-1}=S^2\times(S^1)^{m-1}.
    \]
    Then Theorem \ref{hocolim} for $G=SU(2)$ is equivalent to that there is a homotopy pushout
    \[
      \xymatrix{
        S^2\times(S^1)^{m-1}\ar[r]^(.42){g_m}\ar[d]_{g_m}&\Hom(\Z^{m-1},SU(2))_1\ar[d]\\
        \Hom(\Z^{m-1},SU(2))_1\ar[r]&\Hom(\Z^m,SU(2))_1.
      }
    \]
    For $m=2$, the map $g_2\colon S^2\times S^1\to S^3$ is of degree 2. On the other hand, the map $\Hom(\Z^{m-1},SU(2))_1\to\Hom(\Z^m,SU(2))_1$ has a retraction. Then the homotopy pushout above for $m=2$ splits after a suspension, and so we get a stable homotopy equivalence
    \begin{equation}
      \label{stable splitting}
      \Hom(\Z^2,SU(2))_1\underset{\mathrm{s}}{\simeq}S^2\vee S^3\vee(S^3\cup_2 e^4)
    \end{equation}
    which was previously proved by Baird, Jeffrey and Selick \cite{BJS} and Crabb \cite{C} in different ways.
  \end{example}


  \section{The functor $\widehat{F}_m$}\label{the functor}

  This section defines a functor $\widehat{F}_m\colon P(\Delta)\to\mathbf{Top}$ which extracts the top line of (a variant of) the Bousfield-Kan spectral sequence for $\hocolim F_m$. First, we compute some homology of $F_m(\sigma)$. To this end, we recall the work of Baird \cite{B} on the cohomology of $\Hom(\Z^m,G)_1$ over a field whose characteristic does not divide the order of $W$. Let $K$ be a topological group acting on a space $X$, and let $f\colon X\to Y$ be a $K$-equivariant map, where $K$ acts trivially on $Y$. Let $\bar{f}\colon X/K\to Y$ be the induced map from $f$. Baird \cite{B} defined that a map $f\colon X\to Y$ is a \emph{$\mathbb{F}$-cohomological principal $K$-bundle} if $f$ is a closed surjection and
  \[
    \widetilde{H}^*(\bar{f}^{-1}(y);\mathbb{F})=0
  \]
  for each $y\in Y$, where $\mathbb{F}$ is a field. The main result of Baird's work \cite{B} is the following.

  \begin{theorem}
    \label{Baird}
    Let $K$ be a finite group, and let $\mathbb{F}$ be a field of characteristic prime to $|K|$. If a map $f\colon X\to Y$ is a cohomological principal $K$-bundle over $\mathbb{F}$ where $X$ is paracompact and Hausdorff, then there is an isomorphism
    \[
      H^*(X;\mathbb{F})^K\cong H^*(Y;\mathbb{F}).
    \]
  \end{theorem}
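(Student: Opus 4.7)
The plan is to decompose $f$ as the composite $X\xrightarrow{p}X/K\xrightarrow{\bar{f}}Y$, where $p$ is the orbit projection, and to establish separately that each of the maps
\[
  H^*(X;\mathbb{F})^K\xleftarrow{p^*}H^*(X/K;\mathbb{F})\xleftarrow{\bar{f}^*}H^*(Y;\mathbb{F})
\]
is an isomorphism.

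First I would show that $\bar{f}^*$ is an isomorphism by invoking the Vietoris-Begle mapping theorem. Since $f$ is a closed surjection and $K$ acts by homeomorphisms, $\bar{f}$ is again a closed surjection: a closed subset $C\subset X/K$ has $K$-saturated closed preimage $p^{-1}(C)$ in $X$, and $\bar{f}(C)=f(p^{-1}(C))$ is closed in $Y$. The fibers $\bar{f}^{-1}(y)$ are $\mathbb{F}$-acyclic by hypothesis, and since $K$ is finite, $X/K$ inherits paracompactness and Hausdorffness from $X$. Hence Vietoris-Begle applies and produces the desired isomorphism.

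Second I would show that $p^*$ identifies $H^*(X/K;\mathbb{F})$ with the invariants $H^*(X;\mathbb{F})^K$. This is the classical statement that for a finite group action whose order is invertible in the coefficient field, the cohomology of the orbit space agrees with the ring of invariants. The cleanest route goes through the Borel construction $EK\times_K X$: the map $EK\times_K X\to X/K$ has fibers of the form $BK_x$ for isotropy subgroups $K_x\le K$, and these classifying spaces are $\mathbb{F}$-acyclic since $|K_x|$ divides $|K|$ and is invertible in $\mathbb{F}$, so a further application of Vietoris-Begle gives $H^*(EK\times_K X;\mathbb{F})\cong H^*(X/K;\mathbb{F})$. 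On the other hand, the Cartan-Leray spectral sequence for the fibration $X\to EK\times_K X\to BK$ has $E_2^{s,t}=H^s(K;H^t(X;\mathbb{F}))$, which vanishes for $s>0$ by the same invertibility hypothesis, yielding $H^*(EK\times_K X;\mathbb{F})\cong H^*(X;\mathbb{F})^K$. Composing these two identifications with the one from the first step completes the proof.

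The main obstacle I expect is the point-set bookkeeping needed to legitimately invoke Vietoris-Begle in this level of generality: ensuring that the appropriate cohomology theory (sheaf or \v{C}ech, agreeing with singular cohomology on our spaces) is in play throughout, and verifying that paracompactness and Hausdorffness are preserved not only under the quotient by the finite group $K$ but also under the Borel construction. Once this technicality is dispatched, each remaining ingredient is standard, and the theorem follows by composition.
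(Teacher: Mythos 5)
The paper provides no proof of this statement: it is explicitly presented as ``the main result of Baird's work \cite{B}'' and is simply quoted. So there is no in-paper argument to compare against; I can only assess your proposal on its own terms and against what Baird's argument (and the rest of this paper's toolkit) would plausibly look like.

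Your decomposition $f = \bar{f}\circ p$ with $p\colon X\to X/K$ the orbit map is exactly the right move, and it is forced by the definition the paper records: the acyclicity hypothesis is on the fibres of $\bar f$, not of $f$. Your verification that $\bar f$ inherits the closed-surjection property from $f$ is correct ($\bar f(C)=f(p^{-1}(C))$ via surjectivity of $p$), and the passage of paracompactness and Hausdorffness to $X/K$ holds because $K$ is finite (so $p$ is a closed map with compact fibres). Applying Vietoris--Begle to $\bar f$ in \v Cech/sheaf cohomology then gives the isomorphism $\bar f^*$, as you say.

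Where your route diverges from what one would expect is the identification $H^*(X/K;\mathbb{F})\cong H^*(X;\mathbb{F})^K$. You pass through the Borel construction and two spectral-sequence collapses. This does work, but it carries exactly the point-set overhead you flag: one must justify that the Leray-type arguments apply to $EK\times_K X$ for a general paracompact Hausdorff $X$ and that the answer agrees with the \v Cech cohomology used in the Vietoris--Begle step. The more economical route — and the one this paper itself uses elsewhere (see the proof of Lemma~\ref{W-rep G/T}, which invokes \cite[Theorem III.2.4]{Br}) — is to quote Bredon's theorem directly, or equivalently run a transfer argument: averaging over $K$ gives $\tfrac{1}{|K|}\sum_{g\in K}g^*$ as a retraction of $p^*$, and the $\mathbb{F}$-acyclicity of the orbit fibres $K/K_x$ lets one conclude $p^*$ is an isomorphism onto the invariants in \v Cech cohomology with no detour through $EK$. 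So: your argument is sound, it is a genuinely different route for the second step (Borel construction and Cartan--Leray rather than Bredon/transfer), and what the Bredon route buys is precisely that it stays inside the \v Cech framework where Vietoris--Begle lives, sidestepping the bookkeeping you correctly identify as the main obstacle.
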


  This theorem is applicable to $\Hom(\Z^m,G)_1$ as in \cite{B}.

  \begin{theorem}
    \label{cohomology Hom}
    Let $\mathbb{F}$ be a field of characteristic prime to $|W|$. Then the map \eqref{phi} is a $\mathbb{F}$-cohomological principal $W$-bundle, so that there is an isomorphism
    \[
      H^*(\Hom(\Z^m,G)_1;\mathbb{F})\cong H^*(G/T\times T^m;\mathbb{F})^W.
    \]
  \end{theorem}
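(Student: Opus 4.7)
The strategy is to apply Theorem \ref{Baird} directly to the map $\phi\colon G/T\times T^m\to\Hom(\Z^m,G)_1$ with $K=W$. Two hypotheses must be checked: (i) $\phi$ is a closed surjection, and (ii) for each $y\in\Hom(\Z^m,G)_1$, the fiber $\bar{\phi}^{-1}(y)$ of the induced map $\bar{\phi}\colon(G/T\times T^m)/W\to\Hom(\Z^m,G)_1$ has vanishing reduced $\mathbb{F}$-cohomology. The desired isomorphism is then immediate from the conclusion of Theorem \ref{Baird}.

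Condition (i) is essentially automatic: surjectivity of $\phi$ has already been recorded above as a consequence of Borel's theorem in \cite{Bo}, and closedness holds because $G/T\times T^m$ is compact (as $G$ is a compact Lie group) while $\Hom(\Z^m,G)_1\subseteq G^m$ is Hausdorff.

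For (ii), fix $y\in\Hom(\Z^m,G)_1$. By Borel's theorem applied to the identity component, I may conjugate so that $y\in T^m$. A point $(gT,t)\in\phi^{-1}(y)$ satisfies $t=g^{-1}yg\in T^m$, and since $t$ and $y$ are then conjugate tuples in $T$ inside $G$, there is a $w\in W$ with lift $\dot{w}\in N_G(T)$ such that $g^{-1}yg=\dot{w}y\dot{w}^{-1}$; equivalently $g\dot{w}\in Z_G(y)$. Routine coset bookkeeping then yields
\[
  \phi^{-1}(y)\cong\bigsqcup_{W_y\backslash W}Z_G(y)/T,
\]
where $W_y=N_{Z_G(y)}(T)/T$, with the $W$-action permuting the components transitively and the stabilizer $W_y$ acting on one component by right translation. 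Hence
\[
  \bar{\phi}^{-1}(y)\cong Z_G(y)/N_{Z_G(y)}(T).
\]
Since $G$ is simply-connected, $Z_G(y)$ is connected (Steinberg), so it is a compact connected Lie group with maximal torus $T$ and Weyl group $W_y\subseteq W$. Because $\mathrm{char}\,\mathbb{F}$ does not divide $|W|$ and hence not $|W_y|$, the standard averaging/transfer argument for the finite quotient $Z_G(y)/T\to Z_G(y)/N_{Z_G(y)}(T)$ gives
\[
  H^*\bigl(\bar{\phi}^{-1}(y);\mathbb{F}\bigr)\cong H^*(Z_G(y)/T;\mathbb{F})^{W_y}\cong\mathbb{F},
\]
the final identification following from the classical fact that the coinvariant algebra of a finite reflection group has $W_y$-invariants concentrated in degree zero. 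This verifies (ii), completing the proof.

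The main obstacle lies in the fiber computation in (ii): enumerating $\phi^{-1}(y)$ correctly via $W_y\backslash W$ and carefully tracking the $W$-action so that the quotient collapses to $Z_G(y)/N_{Z_G(y)}(T)$. The acyclicity then reduces to a standard Weyl-group averaging computation for the flag variety of the (connected) centralizer.
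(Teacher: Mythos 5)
Your proposal takes a genuinely different route from the paper: the paper does not prove this theorem but simply refers the reader to Baird's original argument in \cite{B}. You instead directly verify the two defining conditions of a cohomological principal $W$-bundle, which is a legitimate and self-contained approach. The closedness/surjectivity step and the fiber identifications
\[
  \phi^{-1}(y)\cong\bigsqcup_{W_y\backslash W}Z_G(y)/T,\qquad
  \bar{\phi}^{-1}(y)\cong Z_G(y)/N_{Z_G(y)}(T),
\]
are correct, including the point that two tuples in $T^m$ conjugate in $G$ are already conjugate by $N_G(T)$ (because $T$ and $gTg^{-1}$ are then both maximal tori of $Z_G(y)$ and hence conjugate inside $Z_G(y)^0$).

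There is, however, a gap in the justification of acyclicity. You invoke Steinberg to conclude that $Z_G(y)$ is connected, but Steinberg's theorem only asserts connectedness of the centralizer of a \emph{single} semisimple element in a simply-connected group; it does not extend to centralizers of tuples. Concretely, $Z_G(y_1,\ldots,y_m)=Z_{Z_G(y_1)}(y_2,\ldots,y_m)$, and while $Z_G(y_1)$ is connected it need not be simply-connected, so the inductive step fails and $Z_G(y)$ can in general be disconnected for a tuple $y\in T^m$. This is precisely the kind of phenomenon studied in \cite{BFM}. The conclusion you need is nevertheless true: writing $Z^0=Z_G(y)^0$ and $W_y^0=N_{Z^0}(T)/T$, the group $W_y$ surjects onto $\pi_0(Z_G(y))$ and so permutes the components of $Z_G(y)/T$ transitively with component stabilizer $W_y^0$; hence
\[
  H^*\bigl(Z_G(y)/T;\mathbb{F}\bigr)^{W_y}\cong H^*\bigl(Z^0/T;\mathbb{F}\bigr)^{W_y^0}\cong\mathbb{F},
\]
using that $|W_y^0|$ divides $|W|$ and the coinvariant algebra of $W_y^0$ has invariants only in degree zero. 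So the argument is repairable, but you should replace the appeal to Steinberg for tuples with this more careful treatment of the disconnected case.
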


  We also apply Theorem \ref{Baird} to $F_m(\sigma)$. The following lemma is immediate from the definition of a cohomological principal bundle.

  \begin{lemma}
    \label{bundle pullback}
    If $f\colon X\to Y$ is a $\mathbb{F}$-cohomological principal $K$-bundle, then for any closed subset $Z\subset Y$, the natural map
    \[
      f^{-1}(Z)\to Z
    \]
    is a $\mathbb{F}$-cohomological principal $K$-bundle.
  \end{lemma}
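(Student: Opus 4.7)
The plan is to verify the two defining conditions of an $\mathbb{F}$-cohomological principal $K$-bundle directly for the restriction $g := f|_{f^{-1}(Z)}\colon f^{-1}(Z)\to Z$. As the authors signal (``immediate from the definition''), this is essentially bookkeeping, with the only substantive point being the identification of quotient fibers.

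First I would check the closed surjection property. Surjectivity of $g$ is inherited from $f$. For closedness, note that $f^{-1}(Z)$ is closed in $X$ by continuity of $f$, so any subset closed in the subspace topology of $f^{-1}(Z)$ is already closed in $X$; applying closedness of $f$ then produces a closed subset of $Y$ contained in $Z$, which is closed in $Z$.

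The real content is to identify $\bar{g}^{-1}(y)$ with $\bar{f}^{-1}(y)$ for $y\in Z$. Since $f$ is $K$-equivariant with trivial action on $Y$, the subspace $f^{-1}(Z)$ is $K$-invariant, and the natural continuous map $f^{-1}(Z)/K \to X/K$ is injective with image $\bar{f}^{-1}(Z)$. Granted that the quotient map $X\to X/K$ is closed (standard in the compact/finite group settings where this notion is used), its restriction to the saturated closed set $f^{-1}(Z)$ is closed onto its image, so this continuous bijection is a homeomorphism onto $\bar{f}^{-1}(Z)$. Under this identification, $\bar{g}^{-1}(y) = \bar{f}^{-1}(y)$ for every $y\in Z$, and the latter has trivial reduced $\mathbb{F}$-cohomology by the hypothesis on $f$. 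There is no real obstacle; the only mildly delicate point is the closed-map property of the $K$-quotient, which is the standard ambient assumption.
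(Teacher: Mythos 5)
Your argument is correct and captures exactly the routine verification the paper declares ``immediate from the definition'': surjectivity and closedness of the restriction are inherited from $f$, and the induced map $f^{-1}(Z)/K \to \bar f^{-1}(Z)$ is a homeomorphism, so for $y\in Z$ the quotient fiber $\bar g^{-1}(y)$ coincides with $\bar f^{-1}(y)$ and thus still has trivial reduced $\mathbb{F}$-cohomology. The one place you lean on an auxiliary hypothesis is the closedness of the quotient $X\to X/K$; this is fine here since the $K$ appearing in the paper is always a finite group, but it is worth noting that the identification does not actually need it: $f^{-1}(Z)$ is a saturated closed subset, and the restriction of any quotient map to a saturated closed (or open) subset is again a quotient map onto its image, which already forces the continuous bijection $f^{-1}(Z)/K\to\bar f^{-1}(Z)$ to be a homeomorphism for an arbitrary topological group $K$.
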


  We consider special representations of $W$.

  \begin{lemma}
    \label{W-rep G/T}
    \begin{enumerate}
      \item The $W$-representation $H^n(T;\Q)$ is the sign representation.

      \item For $n\ge 2$, the $W$-representation $H^{n-1}(T;\Q)$ does not include the trivial representation.

      \item The $W$-representation $H^{\dim G-n}(G/T;\Q)$ is the sign representation.
    \end{enumerate}
  \end{lemma}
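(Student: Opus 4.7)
My plan is to treat the three parts in order, using the identifications $H^*(T;\Q) \cong \Lambda(\t^*)$ and Borel's presentation $H^*(G/T;\Q) \cong S(\t^*)/S(\t^*)^W_+$ of the cohomology of the flag manifold, where $S(\t^*)$ is the symmetric algebra and $S(\t^*)^W_+$ is the ideal generated by positive-degree $W$-invariants.

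For (1), I would use $T = \t/L$ to identify $H^1(T;\Q) \cong \t^*$ as a $W$-module, equipped with the contragredient of the reflection representation on $\t$. The top exterior power $H^n(T;\Q) \cong \Lambda^n \t^*$ is one-dimensional with character $w \mapsto \det(w \mid \t^*)$. Each simple reflection fixes a hyperplane of $\t$ and negates a complementary direction, so it acts on $\t^*$ with determinant $-1$; since the simple reflections generate $W$, this determinant character coincides with the sign character.

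For (2), I would invoke the natural $W$-equivariant isomorphism $\Lambda^{n-1}(\t^*) \cong \t \otimes \Lambda^n(\t^*)$, which by (1) becomes $\Lambda^{n-1}(\t^*) \cong \t \otimes \mathrm{sgn}$. Therefore the trivial representation occurs in $H^{n-1}(T;\Q)$ if and only if $\mathrm{sgn}$ occurs as a subrepresentation of $\t$. Because $G$ is simple, $\t$ is an irreducible $W$-module of dimension $n$, so for $n \ge 2$ it cannot coincide with the one-dimensional sign representation, and the conclusion follows.

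For (3), the plan is to combine Borel's theorem with the structure of the coinvariant algebra. The top cohomological degree $\dim G - n$ corresponds to polynomial degree $|\Phi^+|$ and, by a standard consequence of Chevalley's theorem, is spanned by the class of $\delta := \prod_{\alpha \in \Phi^+} \alpha$, since $\delta$ is (up to scalar) the unique nonzero alternating polynomial of minimal degree and multiplication by $\delta$ maps $S(\t^*)^W$ isomorphically onto the space of $W$-alternating elements. For $w \in W$, exactly $\ell(w)$ positive roots are sent to negative roots, so $w \cdot \delta = (-1)^{\ell(w)} \delta$, which by (1) agrees with $\det(w \mid \t^*)$; hence $H^{\dim G - n}(G/T;\Q)$ carries the sign representation. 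I expect the main subtle point to be the classical identification of the top class of the coinvariant algebra with $\delta$ modulo the invariant ideal, for which I would cite Chevalley's structure theorem rather than verify it directly.
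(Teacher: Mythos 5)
Your proof is correct, and for parts (2) and (3) it takes a genuinely different route from the paper's. Part (1) is essentially the same in both: identify $H^1(T;\Q)$ with (the dual of) the reflection representation and take the top exterior power, which carries the determinant character.

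For (2), the paper argues topologically: it invokes the transfer-type isomorphism $H^*(T/W;\Q)\cong H^*(T;\Q)^W$ of Bredon and the identification $T/W\cong\Delta$ (a contractible simplex, from Proposition~\ref{T/W}) to conclude $H^{n-1}(T;\Q)^W=0$. You instead observe that $\Lambda^{n-1}\t^*\cong\t\otimes\Lambda^n\t^*\cong\t\otimes\mathrm{sgn}$, so the trivial representation occurs in $H^{n-1}(T;\Q)$ iff $\mathrm{sgn}$ occurs in $\t$, and then use irreducibility of the reflection representation (valid because $G$ is simple) to rule this out for $n\ge 2$. Both are clean; yours is purely representation-theoretic and in fact gives slightly more information (the full $W$-module structure of $H^{n-1}$, not just the absence of invariants), while the paper's piggy-backs on the $T/W\cong\Delta$ identification it has already established and will use again.

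For (3), the paper again leans on its own machinery: it applies Baird's cohomological-principal-bundle theorem to $\Hom(\Z,G)_1=G$ to get $(H^{d-n}(G/T;\Q)\otimes H^n(T;\Q))^W\cong H^d(G;\Q)\cong\Q$, then uses that both factors are one-dimensional to conclude the tensor product is trivial and hence $H^{d-n}(G/T;\Q)$ must be $\mathrm{sgn}\otimes\mathrm{sgn}^{-1}$... rather, $\mathrm{sgn}$ by (1). Your argument instead goes through Borel's presentation of $H^*(G/T;\Q)$ as the coinvariant algebra and the classical fact that the top degree is spanned by the discriminant $\delta=\prod_{\alpha\in\Phi^+}\alpha$, on which $w$ acts by $(-1)^{\ell(w)}=\det(w)$. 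Both are correct; yours is more self-contained from classical Lie theory, but imports the nontrivial structure theorem about the top coinvariant, whereas the paper's is shorter given that Baird's theorem has already been set up and used in the same section. One small caveat: your step identifying the top class with $\delta$ is the one place where you appeal to a result (Chevalley/Steinberg) not otherwise needed in the paper, so it is worth citing explicitly if you adopt this route.
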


  \begin{proof}
    (1) Since each reflection of $W$ changes the orientation of $\t$ and $H^1(T;\R)\cong\t$ as a $W$-module, $H^n(T;\Q)\cong\Lambda^nH^1(T;\Q)$ is the sign representation of $W$.

    \noindent(2) By \cite[Theorem III.2.4]{Br}, there is an isomorphism
    \[
      H^*(T/W;\Q)\cong H^*(T;\Q)^W.
    \]
    Then by Proposition \ref{T/W}, $H^{n-1}(T;\Q)^W=0$ for $n\ge 2$, completing the proof.

    \noindent(3) By Theorem \ref{Baird}, there is an isomorphism
    \[
      H^*(G/T\times T;\Q)^W\cong H^*(G;\Q)
    \]
    because $\Hom(\Z,G)_1=G$. Then we get
    \[
      (H^{d-n}(G/T;\Q)\otimes H^n(T;\Q))^W\cong H^d(G/T\times T;\Q)^W\cong H^d(G;\Q)\cong\Q.
    \]
    So since $H^{d-n}(G/T;\Q)\cong H^n(T;\Q)\cong\Q$, $H^{d-n}(G/T;\Q)\otimes H^n(T;\Q)$ is the trivial $W$-representation. Thus the statement follows from (1).
  \end{proof}

  Now we compute the homology of $F_m(\sigma)$.

  \begin{lemma}
    \label{top homology fiber}
    For $\sigma\in\Delta$, we have
    \[
      H_{q_m}(F_m(\sigma))\cong
      \begin{cases}
        \Z&m\text{ is even or }\dim\sigma=n\\
        0&m\text{ is odd and }\dim\sigma<n.
      \end{cases}
    \]
  \end{lemma}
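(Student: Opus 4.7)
The plan is to split into the cases $\dim\sigma=n$ and $\dim\sigma<n$. The case $\dim\sigma=n$ is immediate: then $W(\sigma)=1$ and $Z(\sigma)=T$, so the functor $F_m^\sigma$ is constant with value $G/T$ and Proposition \ref{fiber hocolim} gives $F_m(\sigma)\cong G/T\times T^{m-1}$, a closed orientable $q_m$-manifold whose top integer homology is $\Z$.

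For $\dim\sigma<n$, I would first identify $F_m(\sigma)$ with a fiber bundle over $G/Z(\sigma)$. Picking a representative $\bar{\sigma}_0\in T$ of $\sigma_0\in\Delta=T/W$, the condition that $g_1,\ldots,g_{m-1}$ commute with $\bar{\sigma}_0$ confines them to the connected centralizer $Z(\sigma)$ and produces a natural $G$-equivariant homeomorphism
\[
F_m(\sigma)\;\cong\;G\times_{Z(\sigma)}\Hom(\Z^{m-1},Z(\sigma))_1,
\]
exhibiting $F_m(\sigma)$ as a fiber bundle over the closed, orientable, simply-connected manifold $G/Z(\sigma)$ of dimension $d-\dim Z(\sigma)$ with fiber $\Hom(\Z^{m-1},Z(\sigma))_1$. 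Setting $q_0=\dim Z(\sigma)+n(m-2)$ so that $q_m=q_0+(d-\dim Z(\sigma))$, I would then run the homology Serre spectral sequence; its local coefficients are trivial because the base is simply-connected, and a routine check shows the entry $E^2_{d-\dim Z(\sigma),\,q_0}=H_{q_0}(\Hom(\Z^{m-1},Z(\sigma))_1)$ is the unique nonzero term in total degree $q_m$, giving
\[
H_{q_m}(F_m(\sigma))\;\cong\;H_{q_0}(\Hom(\Z^{m-1},Z(\sigma))_1).
\]

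The next step is to compute this top homology of the fiber. The Baird-type argument behind Theorem \ref{cohomology Hom} applies verbatim to the connected compact Lie group $Z(\sigma)$ with Weyl group $W(\sigma)$, showing that the analogue $Z(\sigma)/T\times T^{m-1}\to\Hom(\Z^{m-1},Z(\sigma))_1$ of $\phi$ is a cohomological principal $W(\sigma)$-bundle. Theorem \ref{Baird} then yields
\[
H^{q_0}(\Hom(\Z^{m-1},Z(\sigma))_1;\mathbb{F})\;\cong\;H^{q_0}(Z(\sigma)/T\times T^{m-1};\mathbb{F})^{W(\sigma)}
\]
for any field $\mathbb{F}$ of characteristic not dividing $|W(\sigma)|$. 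The proofs of Lemma \ref{W-rep G/T}(1) and (3) go through with $(Z(\sigma),W(\sigma))$ in place of $(G,W)$, so $W(\sigma)$ acts on both $H^{\dim Z(\sigma)-n}(Z(\sigma)/T;\mathbb{F})$ and $H^n(T;\mathbb{F})$ by its sign character, and therefore on the one-dimensional $H^{q_0}(Z(\sigma)/T\times T^{m-1};\mathbb{F})$ by $\mathrm{sign}^m$. For $m$ even the invariants are $\mathbb{F}$, while for $m$ odd (so that $\mathrm{sign}^m=\mathrm{sign}$ is nontrivial, using $W(\sigma)\ne 1$) they vanish.

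The hard part will be promoting this field-coefficient statement to integer coefficients. Over $\Q$ the calculation pins down the rank of $H_{q_0}(\Hom(\Z^{m-1},Z(\sigma))_1;\Z)$ as $1$ (for $m$ even) or $0$ (for $m$ odd), and the Ramras-Stafa Poincar\'e-series argument underlying Lemma \ref{possible torsion}, applied to $Z(\sigma)$, already rules out $p$-torsion at primes $p\nmid|W(\sigma)|$. To exclude torsion at primes $p\mid|W(\sigma)|$, I would verify that $\Hom(\Z^{m-1},Z(\sigma))_1$ is a closed connected pseudomanifold of dimension $q_0$, its top-dimensional stratum being the image of the generic locus of the Weyl map for $Z(\sigma)$ and its singular set having codimension at least two. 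Under this, the top integer homology is forced to be $\Z$ in the orientable case and $0$ otherwise; orientability is detected precisely by triviality of $\mathrm{sign}^m$ on the fundamental class of $Z(\sigma)/T\times T^{m-1}$, matching the field-level computation and closing the argument.
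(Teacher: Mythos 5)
Your proposal takes a genuinely different route from the paper's proof, and the first half of it is sound, but the last step has a real gap that the paper sidesteps with a much simpler device.

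On the difference in method: the paper restricts the global $W$-equivariant map $\phi\colon G/T\times T^m\to\Hom(\Z^m,G)_1$ over $\pi^{-1}(\sigma_0)$, observes (via Lemma \ref{bundle pullback}) that $G/T\times T^{m-1}\to F_m(\sigma)$ is a $\Q$-cohomological principal $W(\sigma)$-bundle, and then applies Baird's theorem and Lemma \ref{W-rep G/T} directly to compute $\dim H^{q_m}(F_m(\sigma);\Q)$. You instead describe $F_m(\sigma)$ as an associated bundle $G\times_{Z(\sigma)}\Hom(\Z^{m-1},Z(\sigma))_1$ and feed it into the Serre spectral sequence, reducing to the top homology of $\Hom(\Z^{m-1},Z(\sigma))_1$; this identification is correct (the ``only if'' direction uses Borel's theorem that a commuting tuple in the identity component lies in a common maximal torus, which forces $(g_1,\ldots,g_{m-1})$ into a maximal torus of $Z(\sigma)$), and your application of Baird's theorem to the intrinsic $Z(\sigma)$-bundle is also valid, since Baird's theorem and the surjectivity of $\phi$ hold for any compact connected Lie group.

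The gap is in promoting the field computation to $\Z$ coefficients. You propose a pseudomanifold/orientability argument for $\Hom(\Z^{m-1},Z(\sigma))_1$; none of its hypotheses (pseudomanifold structure, codimension of the singular locus, the orientability criterion via $\mathrm{sign}^m$) are verified, and establishing them would be at least as hard as the lemma itself. The paper avoids all of this: Corollary \ref{dim fiber} says $F_m(\sigma)$ is a finite CW complex of dimension exactly $q_m$ (it is the $W(\sigma)$-quotient of the homotopy colimit $\hocolim F_m^\sigma$ over $P(T^{m-1})$, whose cells have dimension at most $q_m$), so $H_{q_m}(F_m(\sigma))$ is a subgroup of the free group of top cellular chains and is therefore automatically free abelian. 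Once freeness is known, the rational rank computation you did suffices. In your framework the analogous fix would be to show that $\Hom(\Z^{m-1},Z(\sigma))_1$ admits a CW structure of dimension exactly $q_0$, but that is an extra fact you would need to prove for the non-simply-connected, non-simple group $Z(\sigma)$; the paper's version avoids that by working with $F_m(\sigma)$ directly, for which the needed CW structure is already part of the homotopy decomposition.
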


  \begin{proof}
    By Corollary \ref{dim fiber}, $H_{q_m}(F_m(\sigma))$ is a free abelian group. Then we compute $\dim H^{q_m}(F_m(\sigma);\Q)$ because $\rank H_{q_m}(F_m(\sigma))=\dim H^{q_m}(F_m(\sigma);\Q)$. By Theorem \ref{cohomology Hom} and Lemma \ref{bundle pullback}, the map
    \[
      \phi^{-1}(\pi^{-1}(\sigma_0))\to\pi^{-1}(\sigma_0)
    \]
    is a $\Q$-cohomological principal $W$-bundle. The space $\phi^{-1}(\pi^{-1}(\sigma_0))$ has $|W|/|W(\sigma)|$ connected components and each component is homeomorphic with $G/T\times T^{m-1}$. Moreover, $W$ permutes these components transitively and each component is fixed by the action of $W(\sigma)$. Then the map
    \[
      G/T\times T^{m-1}\to\pi^{-1}(\sigma_0)=F_m(\sigma)
    \]
    is a $\Q$-cohomological principal $W(\sigma)$-bundle. Thus by Theorem \ref{Baird}, we obtain an isomorphism
    \[
      H^*(F_m(\sigma);\Q)\cong H^*(G/T\times T^{m-1};\Q)^{W(\sigma)}.
    \]
    If $\dim\sigma=n$, then $W(\sigma)=1$, implying $\dim H^{q_m}(F_m(\sigma);\Q)=1$. Now we assume $\dim\sigma<n$, or equivalently, $W(\sigma)\ne 1$. By Lemma \ref{W-rep G/T}, $H^{d-n}(G/T;\Q)$ and $H^n(T;\Q)$ are the sign representation of $W$. Then it follows from the K\"unneth theorem that $H^{q_m}(G/T\times T^{m-1};\Q)$ is the tensor product of $m$ copies of the sign representation of $W$. Thus since $W(\sigma)\ne 1$, we obtain
    \[
      H^{q_m}(G/T\times T^{m-1};\Q)^{W(\sigma)}\cong
      \begin{cases}
        \Q&m\text{ is even}\\
        0&m\text{ is odd.}
      \end{cases}
    \]
    Therefore the proof is complete.
  \end{proof}

  Now we define a functor $\widehat{F}_m\colon P(\Delta)\to\mathbf{Top}$ by
  \[
    \widehat{F}_m(\sigma)=
    \begin{cases}
      S^{q_m}&m\text{ is even or }\dim\sigma=n\\
      *&m\text{ is odd and }\dim\sigma<n
    \end{cases}
  \]
  such that the map $F_m(\sigma>\tau)$ is the constant map for $m$ odd and a map of degree $|W(\tau)|/|W(\sigma)|$ for $m$ even. Since
  \[
    (|W(\mu)|/|W(\tau)|)\cdot(|W(\tau)|/|W(\sigma)|)=|W(\mu)|/|W(\sigma)|
  \]
  for $\sigma>\tau>\mu\in P(\Delta)$, $\widehat{F}_m$ is well-defined.

  Next we define a natural transformation $\rho\colon F_m\to\widehat{F}_m$. For $m$ odd, $\rho$ is defined by the pinch map onto the top cell $G/T\times T^{m-1}\to S^{q_m}$ and the constant map. Suppose $m$ is even. For $\sigma\in P(\Delta)$, let $\mathcal{Q}(\sigma)$ be the union of the boundary of $\mathcal{P}$ and the image of all walls of the Stiefel diagram including $\sigma$ under the projection $\t\to\mathcal{Q}$, where $\mathcal{Q}$ is the triangulation of $T$ in Section \ref{Triangulation of a maximal torus} and $\t$ is the Lie algebra of $T$. Then by \eqref{homeo Q},
  \[
    \mathcal{Q}/\mathcal{Q}(\sigma)=\bigvee_{|W(\sigma)|}S^n.
  \]
  Moreover, for $\sigma>\tau\in P(\Delta)$, $\mathcal{Q}(\sigma)\subset\mathcal{Q}(\tau)$, implying there is a commutative diagram
  \begin{equation}
    \label{pinch map}
    \xymatrix{
      \mathcal{Q}/\mathcal{Q}(\sigma)\ar@{=}[r]\ar[d]&\bigvee_{|W(\sigma)|}S^n\ar[d]^{\bigvee_{|W(\sigma)|}\nabla}\\
      \mathcal{Q}/\mathcal{Q}(\tau)\ar@{=}[r]&\bigvee_{|W(\sigma)|}\bigvee_{|W(\tau)|/|W(\sigma)|}S^n\ar@{=}[r]&\bigvee_{|W(\tau)|}S^n,
    }
  \end{equation}
  where $\nabla\colon S^n\to\bigvee_{|W(\tau)|/|W(\sigma)|}S^n$ is the pinch map. On the other hand, a face $\tau$ of $\mathcal{Q}$ satisfies
  \[
    Z(\tau)\cap Z(\sigma)=T
  \]
  whenever $\Int(\tau)$ is in $\mathcal{Q}-\mathcal{Q}(\sigma)$, where $Z(\tau)\cap Z(\sigma)$ always includes $T$. Then by Proposition \ref{fiber hocolim}, there is a projection
  \begin{align*}
    F_m(\sigma)\to&((G/T\times T^{m-2})\wedge(\mathcal{Q}/\mathcal{Q}(\sigma)))/W(\sigma)\\
    &=\left((G/T\times T^{m-2})\wedge\bigvee_{|W(\sigma)|}S^n\right)/W(\sigma).
  \end{align*}
  Since $W(\sigma)$ permutes spheres in $\bigvee_{|W(\sigma)|}S^n$,
  \[
    \left((G/T\times T^{m-2})\wedge\bigvee_{|W(\sigma)|}S^n\right)/W(\sigma)=(G/T\times T^{m-2})\wedge S^n
  \]
  By \eqref{pinch map}, this map satisfies the commutative diagram
  \[
    \xymatrix{
      F_m(\sigma)\ar[r]\ar[d]_{F_m(\sigma>\tau)}&(G/T\times T^{m-2})\wedge S^n\ar[d]^{|W(\tau)|/|W(\sigma)|}\\
      F_m(\tau)\ar[r]&(G/T\times T^{m-2})\wedge S^n.
    }
  \]
  Thus composing with the pinch map onto the top cell $(G/T\times T^{m-2})\wedge S^n\to S^{q_m}$, we obtain a natural transformation $\rho\colon F_m\to\widehat{F}_m$.

  We show properties of the natural transformation $\rho\colon F_m\to\widehat{F}_m$ in homology. By the construction and Lemma \ref{top homology fiber}, we have:

  \begin{proposition}
    \label{top homology projection}
    Let $\sigma \in P(\Delta)$. If $m$ is even or $\dim \sigma=n$, then the map $\rho_\sigma\colon F_m(\sigma)\to\widehat{F}_m(\sigma)$ is an isomorphism in $H_{q_m}$ .
  \end{proposition}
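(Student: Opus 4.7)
The goal is to show $\rho_{\sigma*}\colon H_{q_m}(F_m(\sigma))\to H_{q_m}(\widehat{F}_m(\sigma))$ is an isomorphism whenever $m$ is even or $\dim\sigma=n$. By Lemma \ref{top homology fiber} the source is $\Z$, and the target $\widehat{F}_m(\sigma)=S^{q_m}$ also has $H_{q_m}\cong\Z$; hence $\rho_{\sigma*}$ is multiplication by some integer and the task reduces to showing this integer is $\pm1$. The plan is to split into two cases.

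The easy case is $\dim\sigma=n$, i.e.\ $\sigma=\Delta$. Here $W(\Delta)=1$ and $Z(\Delta)=T$, so by Proposition \ref{fiber hocolim} the space $F_m(\Delta)$ is the homotopy colimit of the constant functor $G/T$ over $P(T^{m-1})$ and therefore equals the closed orientable manifold $G/T\times T^{m-1}$ of dimension exactly $q_m$. Inspecting the construction of $\rho$ shows that $\rho_\Delta$ specializes in this case to the standard pinch map onto the top cell of this product manifold, which manifestly induces an isomorphism on $H_{q_m}$.

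For the remaining case, $m$ even with $\dim\sigma<n$, the plan is to apply naturality of $\rho$ along the morphism $\Delta>\sigma$ in $P(\Delta)$, producing the commutative square
\[
\xymatrix{
F_m(\Delta)\ar[r]^-{\phi_\sigma}\ar[d]_-{\rho_\Delta}&F_m(\sigma)\ar[d]^-{\rho_\sigma}\\
S^{q_m}\ar[r]_-{|W(\sigma)|}&S^{q_m}
}
\]
whose bottom edge has degree $|W(\sigma)|$ by the definition of $\widehat{F}_m$ and whose left edge is an isomorphism on $H_{q_m}$ by the previous paragraph. Passing to $H_{q_m}$ yields $\rho_{\sigma*}\cdot\phi_{\sigma*}=\pm|W(\sigma)|$, so it suffices to show that $\phi_{\sigma*}$ has degree $\pm|W(\sigma)|$.

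To handle the degree of $\phi_\sigma$, the plan is to exploit that $\phi_\sigma$ is $W(\sigma)$-equivariant (with trivial action on the target) and is generically an honest $|W(\sigma)|$-fold covering: on the open dense subset of $G/T\times T^{m-1}$ whose $T$-coordinates avoid every wall of the Stiefel diagram not containing $\sigma$, the action of $W(\sigma)$ is free and $\phi_\sigma$ restricts to an unramified $|W(\sigma)|$-sheeted cover onto an open dense submanifold of $F_m(\sigma)$. Since $m$ is even, Lemma \ref{W-rep G/T} shows that $W(\sigma)$ acts on the orientation class of $G/T\times T^{m-1}$ by $\mathrm{sign}^m$, which is trivial; hence orientations are preserved and the covering degree is $+|W(\sigma)|$. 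The main obstacle is to lift this local degree calculation to the integral top homology of the possibly singular space $F_m(\sigma)$; we expect to do this by excising the lower-dimensional singular locus and noting that $H_{q_m}(F_m(\sigma))\cong\Z$ is concentrated on the regular stratum, or alternatively by a direct cellular analysis using the homotopy colimit presentation of Proposition \ref{fiber hocolim}.
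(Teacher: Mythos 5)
Your two-case split is natural, and the $\dim\sigma=n$ case is correct and agrees with the paper: $F_m(\Delta)=G/T\times T^{m-1}$ is a closed oriented $q_m$-manifold and $\rho_\Delta$ is the pinch onto the top cell. The naturality square for $\Delta>\sigma$ is also set up correctly (note that $\phi_\sigma$ and $F_m(\Delta>\sigma)=q_{\sigma,\Delta}$ agree under the homeomorphism $\phi_\Delta$, and $\widehat F_m(\Delta>\sigma)$ has degree $|W(\sigma)|$ by definition), and from that square the desired conclusion would indeed follow once one knows that $(\phi_\sigma)_*$ on $H_{q_m}$ is multiplication by exactly $\pm|W(\sigma)|$.

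The gap you flag, however, is genuine and is not easily patched by the excision argument you sketch. Two issues. First, $\phi_\sigma$ is \emph{not} the orbit map of the free $W(\sigma)$-action on $G/T\times T^{m-1}$: the group action is free on all of $G/T\times T^{m-1}$ (since $W$ acts freely on $G/T$), but $\phi_\sigma$ identifies strictly more than the $W(\sigma)$-orbits whenever some $t_i$ lies on a wall (the whole $Z(t_i)/T$-direction in $G/T$ is collapsed). So one cannot invoke ``quotient by a free action'' to get a covering; $\phi_\sigma$ is an honest $|W(\sigma)|$-sheeted cover only on the locus where all $t_i$ are regular. Second, the complement of that locus has codimension exactly one in $G/T\times T^{m-1}$ (walls are codimension one in $T$), and the ``singular'' part of $F_m(\sigma)$ likewise has codimension one, so there is no automatic reason why $H_{q_m}(F_m(\sigma))\cong\Z$ should be ``concentrated on the regular stratum'' and excision does not directly apply. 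Knowing a local degree on a codimension-one complement does not determine the map on integral top homology without more work. (One does get for free that $(\phi_\sigma)_*\neq 0$, from the rational isomorphism $H^{q_m}(F_m(\sigma);\Q)\cong H^{q_m}(G/T\times T^{m-1};\Q)^{W(\sigma)}$ of Lemma \ref{top homology fiber}, but not the exact value $\pm|W(\sigma)|$.)

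The paper avoids this problem by reading the statement off the \emph{construction} of $\rho$ in Section \ref{the functor}: for $m$ even, $\rho_\sigma$ is manufactured as the composite
\[
F_m(\sigma)\longrightarrow (G/T\times T^{m-2})\wedge(\mathcal{Q}/\mathcal{Q}(\sigma))/W(\sigma)=(G/T\times T^{m-2})\wedge S^n\longrightarrow S^{q_m},
\]
where the first arrow collapses a subcomplex of dimension $<q_m$ and the second is the pinch onto the top cell of $\Sigma^n(G/T\times T^{m-2})_+$. Combined with Lemma \ref{top homology fiber}, which identifies both $H_{q_m}$-groups with $\Z$, this exhibits $\rho_\sigma$ as an explicit cellular collapse inducing an isomorphism in degree $q_m$, with no need to compute the degree of $\phi_\sigma$. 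If you want to pursue your route, the cleanest fix is precisely to run the cellular analysis you mention in the alternative: use Proposition \ref{fiber hocolim} to give $F_m(\sigma)$ a CW structure whose $q_m$-cells are indexed by $W(\sigma)$-orbits of top simplices, and compare chains directly --- but that is essentially a rederivation of the paper's construction of $\rho$, not a shortcut around it.
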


  The following variant of the Bousfield-Kan spectral sequence for a homotopy colimit is constructed in \cite{HKTT}. See \cite[XII 4.5]{BK} for the original Bousfield-Kan spectral sequence.

  \begin{proposition}
    \label{spectral sequence}
    Let $F\colon P(K)\to\mathbf{Top}$ be a functor, where $P(K)$ denotes the face poset of a simplicial complex $K$. Then there is a spectral sequence
    \[
      E^1_{p,q}=\bigoplus_{\sigma\in P_p(K)}H_q(F(\sigma))\quad\Longrightarrow\quad H_{p+q}(\hocolim F),
    \]
    where $P_p(K)$ denotes the set of $p$-simplices of $K$.
  \end{proposition}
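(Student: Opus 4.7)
The plan is to derive this spectral sequence from the skeletal filtration of the underlying simplicial complex $K$, mirroring the coproduct description of $\hocolim F$ used in \eqref{Hom coprod} to prove Theorem \ref{hocolim}. For an arbitrary functor $F\colon P(K)\to\mathbf{Top}$ one has the analogous presentation
\[
\hocolim F=\left(\coprod_{\sigma\in P(K)}F(\sigma)\times\sigma\right)/\sim
\]
with the face-gluing relation generated by $(x,\iota_{\tau,\sigma}(y))\sim(F(\tau{>}\sigma)(x),y)$ for $\sigma<\tau$. Setting $X_p$ to be the subspace assembled only from simplices of dimension at most $p$, equivalently $X_p=\hocolim(F|_{P(K^{(p)})})$ where $K^{(p)}$ denotes the $p$-skeleton, produces an exhaustive filtration $\emptyset=X_{-1}\subset X_0\subset X_1\subset\cdots$ of $\hocolim F$.

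The key computation is the identification of the successive quotients. Because the equivalence relation only relates $F(\tau)\times\tau$ to $F(\sigma)\times\sigma$ for strict face inclusions $\sigma<\tau$, each $p$-simplex $\sigma$ is attached to $X_{p-1}$ purely along $F(\sigma)\times\partial\sigma$. Consequently
\[
X_p/X_{p-1}\cong\bigvee_{\sigma\in P_p(K)}(F(\sigma)\times\sigma)/(F(\sigma)\times\partial\sigma)\cong\bigvee_{\sigma\in P_p(K)}\Sigma^p F(\sigma)_+,
\]
and the suspension isomorphism gives $H_{p+q}(X_p,X_{p-1})\cong\bigoplus_{\sigma\in P_p(K)}H_q(F(\sigma))$, which is the claimed $E^1$ page. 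Feeding $\{X_p\}$ into the standard homology spectral sequence of a filtered space then produces a spectral sequence converging to $H_{p+q}(\hocolim F)$; when $K$ is finite-dimensional this is immediate, and otherwise follows from the exhaustive cellular structure $\hocolim F=\bigcup_p X_p$ by passing to direct limits.

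The main obstacle, such as it is, lies at the very first step: verifying the cellular description of $\hocolim F$ in this generality, i.e.\ that the face-poset homotopy colimit admits the explicit coproduct presentation that was used for $F_m$ in Section \ref{Homotopy decomposition}. Once this is in hand the remainder is formal, since the identification of $X_p/X_{p-1}$ depends only on the combinatorics of face inclusions and nothing about the specific functor $F$ enters beyond its values $F(\sigma)$ and the structure maps $F(\tau{>}\sigma)$.
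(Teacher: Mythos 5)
The paper does not actually prove Proposition \ref{spectral sequence}: it cites the construction from \cite{HKTT}, describing it only as ``a variant of the Bousfield--Kan spectral sequence.'' So there is no in-paper proof to compare against. Your argument is nonetheless a correct and natural way to obtain the stated spectral sequence, and it is worth recording that your derivation is sound.

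The skeletal-filtration route you take is essentially the shortest path to this result, and you have correctly isolated the one non-formal step. The filtration $X_p=\hocolim(F|_{P(K^{(p)})})$ is exhaustive, each inclusion $X_{p-1}\hookrightarrow X_p$ is a cofibration because it is obtained by a pushout along $\coprod_{\sigma\in P_p(K)}F(\sigma)\times\partial\sigma\hookrightarrow\coprod_{\sigma\in P_p(K)}F(\sigma)\times\sigma$, and the resulting identification
\[
X_p/X_{p-1}\cong\bigvee_{\sigma\in P_p(K)}\Sigma^p F(\sigma)_+,\qquad H_{p+q}(X_p,X_{p-1})\cong\bigoplus_{\sigma\in P_p(K)}H_q(F(\sigma)),
\]
gives exactly the stated $E^1$-term. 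Convergence for finite-dimensional $K$ is immediate, and you correctly note the direct-limit argument for the general case.

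The step you flag as the potential obstacle, the ``unsubdivided'' coproduct presentation
\[
\hocolim F\cong\left(\coprod_{\sigma\in P(K)}F(\sigma)\times\sigma\right)/\sim,
\]
is indeed the only place requiring a nontrivial input, and it is precisely what makes this a \emph{variant} of the Bousfield--Kan spectral sequence rather than the original: the classical construction filters by skeleta of the nerve of $P(K)$ (i.e.\ by simplices of the barycentric subdivision $\mathrm{sd}(K)$), whereas your filtration uses the simplices of $K$ itself, collapsing each chain $\sigma_0<\cdots<\sigma_p$ of the poset to its top element. The equivalence of these two models is standard for face posets of simplicial (or regular CW) complexes and is what the paper implicitly relies on in \eqref{Hom coprod} as well; a clean way to justify it is to observe that the natural map from the bar-construction model to your coproduct model is a map of filtered spaces inducing a homeomorphism on each filtration quotient, hence a homeomorphism of total spaces. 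With that remark added, your proof is complete.
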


  By Proposition \ref{top homology projection}, we get:

  \begin{lemma}
    \label{Bousfield-Kan}
    Let $E^r$ and $\widehat{E}^r$ be the spectral sequences of Proposition \ref{spectral sequence} for $\hocolim F_m$ and $\hocolim\widehat{F}_m$, respectively. Then the natural transformation $\rho\colon F_m\to\widehat{F}_m$ induces an isomorphism of the top lines
    \[
      \rho_*\colon E_{*,q_m}^1\xrightarrow{\cong}\widehat{E}_{*,q_m}^1.
    \]
  \end{lemma}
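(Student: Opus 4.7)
The plan is to exploit the fact that the $E^1$-page of the spectral sequence in Proposition \ref{spectral sequence} is a direct sum indexed over faces $\sigma \in P_p(\Delta)$, so that $\rho_*$ on the $E^1$-page is induced coordinate-wise by the maps $(\rho_\sigma)_*\colon H_{q_m}(F_m(\sigma))\to H_{q_m}(\widehat{F}_m(\sigma))$. To conclude, it suffices to show that each of these summand maps is an isomorphism, which reduces to a case analysis depending on the parity of $m$ and the dimension of $\sigma$.

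First I would split the argument into the two relevant cases. If $m$ is even, or $m$ is odd and $\dim\sigma = n$, then Proposition \ref{top homology projection} directly gives that $(\rho_\sigma)_*\colon H_{q_m}(F_m(\sigma))\to H_{q_m}(\widehat{F}_m(\sigma))$ is an isomorphism. Second, if $m$ is odd and $\dim \sigma < n$, then by definition $\widehat{F}_m(\sigma) = *$, so $H_{q_m}(\widehat{F}_m(\sigma)) = 0$, and by Lemma \ref{top homology fiber} we also have $H_{q_m}(F_m(\sigma)) = 0$. Hence $(\rho_\sigma)_*$ is (trivially) an isomorphism in this case as well.

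Assembling these isomorphisms over $\sigma \in P_p(\Delta)$ yields the desired isomorphism
\[
  \rho_*\colon E^1_{p,q_m} = \bigoplus_{\sigma\in P_p(\Delta)} H_{q_m}(F_m(\sigma)) \xrightarrow{\cong} \bigoplus_{\sigma\in P_p(\Delta)} H_{q_m}(\widehat{F}_m(\sigma)) = \widehat{E}^1_{p,q_m}
\]
for every $p$. There is essentially no obstacle here since all the substantive content has already been absorbed into Lemma \ref{top homology fiber} and Proposition \ref{top homology projection}; the only mild point to note is that the bookkeeping handles the odd $m$ case uniformly because both sides vanish whenever $\dim\sigma < n$.
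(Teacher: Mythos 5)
Your proposal is correct and takes essentially the same approach as the paper, which simply cites Proposition \ref{top homology projection} (the paper leaves implicit the trivial case where $m$ is odd and $\dim\sigma<n$, which you spell out via Lemma \ref{top homology fiber} and the definition of $\widehat{F}_m$).
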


  \begin{proposition}
    \label{direct summand}
    $H_*(\hocolim\widehat{F}_m)$ is a direct summand of $H_*(\Hom(\Z^m,G)_1)$ for $*\ge q_m$.
  \end{proposition}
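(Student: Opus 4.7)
The plan is to exploit the map $\rho_* \colon \hocolim F_m \to \hocolim \widehat{F}_m$ induced by the natural transformation $\rho$, using the isomorphism on top lines provided by Lemma~\ref{Bousfield-Kan}. Write $E$ and $\widehat{E}$ for the Bousfield-Kan spectral sequences of $\hocolim F_m$ and $\hocolim \widehat{F}_m$ from Proposition~\ref{spectral sequence}. I would first record the geometric inequality $q_m > n$, valid for every $m \geq 2$ since $q_m - n = d - n + n(m-2) \geq d - n > 0$. Both spectral sequences then live in the strip $0 \leq p \leq n$, and the nonzero rows are separated by a gap exceeding $n$.

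The main step is to identify $E^\infty_{*, q_m}$ on both sides. For $\widehat{E}$, each $\widehat{F}_m(\sigma)$ has homology concentrated in degrees $0$ and $q_m$, so $\widehat{E}^1$ is supported in the rows $q = 0$ and $q = q_m$. The only potential differential into the top line with $r \geq 2$ is $d^{q_m+1}$ from $(p + q_m + 1, 0)$, whose source vanishes because $p + q_m + 1 > n$. Hence $\widehat{E}^\infty_{*, q_m} = \widehat{E}^2_{*, q_m}$, and for $q_m \leq k \leq q_m + n$ one has $H_k(\hocolim \widehat{F}_m) = \widehat{E}^\infty_{k - q_m, q_m}$. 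For $E$, Lemma~\ref{Bousfield-Kan} gives an isomorphism $\rho_* \colon E^1_{*, q_m} \xrightarrow{\cong} \widehat{E}^1_{*, q_m}$ which descends to $E^2_{*, q_m}$ via $d^1$. I would then argue inductively that $E^r_{*, q_m}(F_m) = E^2_{*, q_m}(F_m)$ for every $r \geq 2$: any incoming differential $d^r$ on the top line of $E$ maps under $\rho_*$ to the corresponding $\widehat{d}^r$, whose source is zero, so $\rho_*$ kills its image; since $\rho_*$ on $E^2_{*, q_m}$ is injective, the image must already vanish. Passing to the limit yields the isomorphism $\rho_* \colon E^\infty_{*, q_m}(F_m) \xrightarrow{\cong} \widehat{E}^\infty_{*, q_m}$. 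This iterative bookkeeping is the main obstacle.

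Finally, I would extract the direct summand. For $k = q_m + s$ with $0 \leq s \leq n$, the inequality $q_m > n$ forces $E^\infty_{p, k - p}(F_m) = 0$ for $p < s$, so $F_{s-1} H_k(\hocolim F_m) = 0$ and the subgroup $F_s H_k(\hocolim F_m)$ coincides with $E^\infty_{s, q_m}(F_m)$. The analogous statement holds in $\widehat{E}$, and moreover $F_s H_k(\hocolim \widehat{F}_m) = H_k(\hocolim \widehat{F}_m)$ because no higher filtration contributes. Consequently $\rho_*$ restricts to an isomorphism $F_s H_k(\hocolim F_m) \xrightarrow{\cong} H_k(\hocolim \widehat{F}_m)$, which provides the splitting
\[
  H_k(\hocolim F_m) = F_s H_k(\hocolim F_m) \oplus \ker \rho_*,
\]
with the first summand carrying $H_k(\hocolim \widehat{F}_m)$. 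For $k > q_m + n$ the same dimensional analysis forces both sides to vanish, so the conclusion is trivial. Combining with Theorem~\ref{hocolim} identifies $H_k(\hocolim \widehat{F}_m)$ as a direct summand of $H_k(\Hom(\Z^m, G)_1)$ for every $k \geq q_m$.
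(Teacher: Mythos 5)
Your proof is correct and follows essentially the same strategy as the paper's: it uses the map of spectral sequences induced by $\rho$, shows the top lines of both sides collapse at $E^2$ by comparing incoming differentials to the vanishing source terms on the $\widehat{E}$ side, and then reads off the splitting from the filtration. The only cosmetic difference is that you phrase the collapse as an induction on the page $r$, while the paper argues by contradiction with a minimal $r$; these are the same argument.
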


  \begin{proof}
    Let $(E^r,d^r)$ and $(\widehat{E}^r,\hat{d}^r)$ denote the spectral sequences of Proposition \ref{spectral sequence} for $\hocolim F_m$ and $\hocolim\widehat{F}_m$, respectively. Let $r$ be the smallest integer $\ge 2$ such that there is a non-trivial differential $d^r_{p,q_m-r+1}\colon E^r_{p,q_m-r+1}\to E^r_{p-r,q_m}$ for some $p\ge 0$. Suppose that $d^r_{p,q_m-r+1}(x)\ne 0$ for $x\in E^r_{p,q_m-r+1}$. Then $E^r$ and $\widehat{E}^r$ are illustrated below, where possibly non-trivial parts are shaded.

    \begin{figure}[H]
      \centering
      \scalebox{0.9}{
      \begin{tikzpicture}[x=0.7cm, y=0.7cm, thick]
        \fill[black!10!white](0,0)--(1,0)--(1,1)--(10,1)--(10,7)--(0,7)--(0,0);
        \draw[->](0,0)--(11,0);
        \draw[->](0,0)--(0,8);
        \draw(1,0)--(1,1)--(10,1)--(10,7)--(0,7);
        \draw(2,7)--(2,6)--(3,6)--(3,7);
        \draw(8,3)--(7,3)--(7,4)--(8,4)--(8,3);
        \draw(7.5,4)[->, out=90, in=0] to(2.5,6.5);
        \draw[dashed](0,6)--(2,6);
        \draw[dashed](7,0)--(7,3);
        \draw[dashed](8,0)--(8,3);
        \draw[dashed](2,0)--(2,6);
        \draw[dashed](3,0)--(3,6);
        \draw[dashed](0,3)--(7,3);
        \draw[dashed](0,4)--(7,4);
        \draw(0,6) node[above left]{$q_m$};
        \draw(0,3) node[above left]{$q_m-r+1$};
        \draw(0,0.1) node[above left]{$0$};
        \draw(0.85,-0.1) node[below left]{$0$};
        \draw(10,-0.1) node[below left]{$n$};
        \draw(7.8,-0.1) node[below left]{$p$};
        \draw(2.5,-0.1) node[below]{$p-r$};
        \draw(7.15,3.8) node[below right]{$x$};
        \draw(8.7,4.5) node[above]{$d^r_{p,q_m-r+1}$};
        \draw(5,7.5) node[above]{$E^r$};
      \end{tikzpicture}
      }
    \end{figure}

    \begin{figure}[H]
      \centering
      \scalebox{0.9}{
      \begin{tikzpicture}[x=0.7cm, y=0.7cm, thick]
        \fill[black!10!white](0,0)--(1,0)--(1,1)--(0,1)--(0,0);
        \fill[black!10!white](0,7)--(10,7)--(10,6)--(0,6)--(0,7);
        \draw[->](0,0)--(11,0);
        \draw[->](0,0)--(0,8);
        \draw(1,0)--(1,1)--(0,1);
        \draw(0,7)--(10,7)--(10,6)--(0,6);
        \draw(2,7)--(2,6);
        \draw(3,7)--(3,6);
        \draw(2,7)--(2,6);
        \draw(7,3)--(8,3)--(8,4)--(7,4)--(7,3);
        \draw(7.5,4)[->, out=90, in=0] to(2.5,6.5);
        \draw[dashed](7,0)--(7,3);
        \draw[dashed](8,0)--(8,3);
        \draw[dashed](2,0)--(2,6);
        \draw[dashed](3,0)--(3,6);
        \draw[dashed](0,3)--(7,3);
        \draw[dashed](0,4)--(7,4);
        \draw(0,6) node[above left]{$q_m$};
        \draw(0,3) node[above left]{$q_m-r+1$};
        \draw(0,0.1) node[above left]{$0$};
        \draw(0.85,-0.1) node[below left]{$0$};
        \draw(10,-0.1) node[below left]{$n$};
        \draw(7.8,-0.1) node[below left]{$p$};
        \draw(2.5,-0.1) node[below]{$p-r$};
        \draw(7.15,3.85) node[below right]{$0$};
        \draw(8.7,4.5) node[above]{$\hat{d}^r_{p,q_m-r+1}$};
        \draw(5,7.5) node[above]{$\widehat{E}^r$};
      \end{tikzpicture}
      }
    \end{figure}

    By Lemma \ref{Bousfield-Kan}, the natural map $\rho_*\colon E_{p,q_m}^r\to\widehat{E}_{p,q_m}^r$ is an isomorphism, implying
    \[
      0\ne\rho_*(d^r_{p,q_m-r+1}(x))=\widehat{d}^r_{p,q_m-r+1}(\rho_*(x))=0.
    \]
    This is a contradiction. Thus we obtain $E^2_{p,q_m}\cong E^\infty_{p,q_m}$. On the other hand, we have $\widehat{E}^2_{p,q_m}\cong\widehat{E}^\infty_{p,q_m}\cong H_{*+q_m}(\hocolim\widehat{F}_m)$. Then the composite
    \[
      E^\infty_{p,q_m}\to H_{p+q_m}(\hocolim F_m)\xrightarrow{\rho_*}H_{p+q_m}(\hocolim\widehat{F}_m)\cong\widehat{E}^\infty_{p,q_m}
    \]
    is identified with $\rho_*\colon E^2_{p,q_m}\to\widehat{E}^2_{p,q_m}$, implying it is an isomorphism. Therefore by Theorem \ref{hocolim}, the proof is complete.
  \end{proof}

  \begin{example}
    We examine $\hocolim\widehat{F}_m$ for $G=SU(2)$. In this case, $\Delta$ is a 1-simplex, and so as in Example \ref{example hocolim}, there is a homotopy pushout involving $\hocolim\widehat{F}_m$ which yields a homotopy equivalence
    \[
      \hocolim\widehat{F}_m\simeq
      \begin{cases}
        S^{m+2}&m\text{ is odd}\\
        S^{m+1}\vee(S^{m+1}\cup_2e^{m+2})&m\text{ is even.}
      \end{cases}
    \]
    In particular, we can see from \eqref{stable splitting} that $\hocolim\widehat{F}_m$ computes the top homology of $\Hom(\Z^m,SU(2))_1$. This will be generalized in the next section to an arbitrary simply-connected simple $G$.
  \end{example}


  \section{Top homology}\label{Top homology}

  This section computes the top homology of $\Hom(\Z^m,G)_1$ and proves Theorem \ref{main top}. The result depends on the parity of $m$. We start with the case $m$ is odd.

  \begin{theorem}
    \label{top homology odd}
    If $m$ is odd, then the top homology of $\Hom(\Z^m,G)_1$ is
    \[
      H_{q_m+n}(\Hom(\Z^m,G)_1)\cong\Z.
    \]
  \end{theorem}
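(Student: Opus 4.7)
The plan is to apply the Bousfield-Kan-type spectral sequence of Proposition \ref{spectral sequence} to the identification $\Hom(\Z^m,G)_1\cong\hocolim F_m$ from Theorem \ref{hocolim} and read off the unique surviving entry on the diagonal $p+q=q_m+n$.

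I first locate the relevant entries on the $E^1$-page. The poset $P(\Delta)$ has $\Delta$ as its unique $n$-simplex and no simplices of dimension greater than $n$, so the column $p=n$ consists of the single summand $E^1_{n,q_m}=H_{q_m}(F_m(\Delta))$, which is $\Z$ by Lemma \ref{top homology fiber} (applicable because $\dim\Delta=n$). For any entry with $p<n$ on the diagonal $p+q=q_m+n$ one has $q>q_m$, and Corollary \ref{dim fiber} forces $E^1_{p,q}=0$ since $\dim F_m(\sigma)=q_m$. Thus $E^1_{n,q_m}\cong\Z$ is the only potentially nonzero term on this diagonal.

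Next I argue that this copy of $\Z$ survives to $E^\infty$. Incoming differentials to $E^r_{n,q_m}$ would originate from $E^r_{n+r,q_m-r+1}$, and these vanish because $P_{n+r}(\Delta)=\emptyset$. The outgoing differential $d^1\colon E^1_{n,q_m}\to E^1_{n-1,q_m}=\bigoplus_{\sigma\in P_{n-1}(\Delta)}H_{q_m}(F_m(\sigma))$ is precisely where the hypothesis that $m$ is odd enters: Lemma \ref{top homology fiber} gives $H_{q_m}(F_m(\sigma))=0$ for $\dim\sigma<n$ when $m$ is odd, so the codomain vanishes and $d^1=0$. For $r\geq 2$, the outgoing $d^r$ lands in the row $q=q_m+r-1>q_m$, which is zero by Corollary \ref{dim fiber}. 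Hence $E^\infty_{n,q_m}\cong\Z$, and since it is the only nonzero $E^\infty$-term in total degree $q_m+n$, we conclude $H_{q_m+n}(\Hom(\Z^m,G)_1)\cong\Z$.

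I do not anticipate a serious obstacle: once Lemma \ref{top homology fiber} is in hand, the proof reduces to a direct inspection of the top row of the spectral sequence, with the parity of $m$ entering only through the vanishing of $H_{q_m}(F_m(\sigma))$ on proper faces of $\Delta$. This vanishing is exactly what fails when $m$ is even, which is presumably why the even case of Theorem \ref{main top} requires the more elaborate comparison with $\hocolim\widehat{F}_m$ and Proposition \ref{direct summand} rather than a bare spectral-sequence computation.
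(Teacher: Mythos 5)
Your proof is correct and follows essentially the same route as the paper's own first proof of this theorem: run the spectral sequence of Proposition \ref{spectral sequence} for $\hocolim F_m$, observe via Corollary \ref{dim fiber} and Lemma \ref{top homology fiber} that $E^1_{n,q_m}\cong\Z$ is the only nonzero entry on and above the line of total degree $q_m+n$, and conclude by degree reasons. (The paper also offers a second proof that instead computes $\dim_\Q H^{q_m+n}(\Hom(\Z^m,G)_1;\Q)$ directly from Baird's theorem and Lemma \ref{W-rep G/T}, using that the top homology is free; you did not need that alternative.)
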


  \begin{proof}
    We present two proofs.

    \noindent\textbf{First proof.} By Corollary \ref{dim fiber}, the $E^1$-term of the spectral sequence of Proposition \ref{spectral sequence} for $\hocolim F_m$ is given below, where a possibly non-trivial part is shaded. Then by degree reasons, the statement is proved.
    \begin{figure}[htbp]
      \scalebox{0.9}{
      \begin{tikzpicture}[x=0.7cm, y=0.7cm, thick]
        \fill[black!10!white](0,0)--(0,3)--(8,3)--(8,4)--(9,4)--(9,0)--(0,0);
        \draw[->](0,0)--(10,0);
        \draw[->](0,0)--(0,5);
        \draw(0,3)--(8,3)--(8,4)--(9,4)--(9,0);
        \draw[dashed](0,4)--(8,4);
        \draw[dashed](8,3)--(9,3);
        \draw[dashed](8,0)--(8,4);
        \draw(8.1,3.1) node[above right]{$\Z$};
        \draw(0,3.1) node[above left]{$q_m$};
        \draw(-0.2,0.1) node[above left]{$0$};
        \draw(0.8,-0.1) node[below left]{$0$};
        \draw(8.9,-0.1) node[below left]{$n$};
      \end{tikzpicture}
      }
    \end{figure}

    \noindent\textbf{Second proof.} $\Hom(\Z^m,G)_1$ is of dimension $q_m+n$ as mentioned above, implying $H_{q_m+n}(\Hom(\Z^m,G)_1)$ is a free abelian group. Then it suffices to compute the dimension of the rational cohomology $H^{q_m+n}(\Hom(\Z^m,G)_1;\Q)$. By Theorem \ref{cohomology Hom} and the K\"unneth theorem,
    \[
      H^{q_m+n}(\Hom(\Z^m,G)_1;\Q)\cong (H^{\dim G-n}(G/T;\Q)\otimes\underbrace{H^n(T;\Q)\otimes\cdots\otimes H^n(T;\Q)}_m)^W.
    \]
    By Lemma \ref{W-rep G/T}, $H^{\dim G-n}(G/T;\Q)$ and $H^n(T;\Q)$ are the sign representation of $W$, and so we get $\dim H^{q_m+n}(\Hom(\Z^m,G)_1;\Q)=1$, completing the proof.
  \end{proof}

  Next, we consider the case $m$ is even.

  \begin{lemma}
    \label{even top}
    If $m$ is even and $n\ge 2$, then $H_{q_m+n-1}(\Hom(\Z^m,G)_1;\Q)=0$.
  \end{lemma}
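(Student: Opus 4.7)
The plan is to work rationally via Theorem \ref{cohomology Hom}: $H^{q_m+n-1}(\Hom(\Z^m,G)_1;\Q) \cong H^{q_m+n-1}(G/T \times T^m;\Q)^W$. Since $q_m + n - 1 = (d-n) + nm - 1$ sits exactly one degree below the top of $G/T \times T^m$, the K\"unneth formula produces only two types of summands: Type A, of the form $H^{d-n}(G/T;\Q) \otimes H^{j_1}(T;\Q) \otimes \cdots \otimes H^{j_m}(T;\Q)$ with all $j_k = n$ except exactly one equal to $n-1$ (yielding $m$ summands); and Type B, the single summand $H^{d-n-1}(G/T;\Q) \otimes H^n(T;\Q)^{\otimes m}$.

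For Type A, by Lemma \ref{W-rep G/T}(1) and (3), each of $H^n(T;\Q)$ and $H^{d-n}(G/T;\Q)$ is the sign representation of $W$, so since $m$ is even the accumulated tensor of $m$ sign factors collapses to the trivial representation; each Type A summand therefore reduces, as a $W$-representation, to $H^{n-1}(T;\Q)$, whose invariants vanish by Lemma \ref{W-rep G/T}(2). This is precisely where the hypothesis $n \ge 2$ enters the argument.

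For Type B, I would observe that $G/T$ is a compact complex flag manifold, so $d-n = \dim_\R(G/T)$ is even and $H^*(G/T;\Q)$ is concentrated in even degrees (for instance via the Schubert cell decomposition, or equivalently from $d = \sum_{i=1}^n(2d_i-1)$, which gives $d-n = 2\sum(d_i-1)$). In particular $H^{d-n-1}(G/T;\Q) = 0$, and Type B contributes nothing. Combining the two types yields $H^{q_m+n-1}(\Hom(\Z^m,G)_1;\Q)=0$ and hence the desired $H_{q_m+n-1}(\Hom(\Z^m,G)_1;\Q)=0$. The only real obstacle is the Type B step, which depends on the parity of $d-n$ together with the even-degree concentration of the flag variety's rational cohomology; everything else is direct bookkeeping with Lemma \ref{W-rep G/T} and K\"unneth.
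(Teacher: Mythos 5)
Your proof is correct and follows essentially the same route as the paper: reduce to $W$-invariants via Theorem \ref{cohomology Hom}, apply K\"unneth one degree below the top, use that $H^{d-n}(G/T;\Q)$ and $H^n(T;\Q)$ are sign representations so the accumulated $m$ sign factors cancel when $m$ is even, and finish with Lemma \ref{W-rep G/T}(2). The one thing you do beyond what the paper spells out is explicitly dispose of the K\"unneth summand $H^{d-n-1}(G/T;\Q)\otimes H^n(T;\Q)^{\otimes m}$ via the even-degree concentration of $H^*(G/T;\Q)$; the paper's proof is terse on this point, so your bookkeeping is a welcome completion rather than a deviation.
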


  \begin{proof}
    By Lemma \ref{W-rep G/T}, $H^{n-1}(T;\Q)$ does not include the trivial representation of $W$. Then by arguing as in the second proof of Theorem \ref{top homology odd}, the statement is proved.
  \end{proof}

  \begin{remark}
    \label{tophomology SU(2)}
    For $n=1$, $H^{n-1}(T;\Q)$ is the trivial representation of $W$, so that the $n=1$ case is special in Lemma \ref{even top}. The top homology of $\Hom(\Z^m,SU(2))_1$ for $m$ even can be deduced from the results of Baird, Jeffrey and Selick \cite{BJS} and Crabb \cite{C}; it is given by $H_{m+1}(\Hom(\Z^m,G)_1)\cong\Z/2$.
  \end{remark}

  \begin{theorem}
    \label{top homology even}
    If $m$ is even and $n\ge 2$, then the top homology of $\Hom(\Z^m,G)_1$ is
    \[
      H_{q_m+n-1}(\Hom(\Z^m,G)_1)\cong\Z/2.
    \]
  \end{theorem}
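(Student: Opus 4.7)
The plan is to prove the theorem in three stages: reduce $H_{q_m+n-1}$ to finite torsion, exhibit $\Z/2$ as a direct summand via $\hocolim\widehat{F}_m$, and then rule out any further contribution using the Bousfield--Kan spectral sequence of Proposition \ref{spectral sequence} applied to $\hocolim F_m$. First I would show $H_{q_m+n}(\Hom(\Z^m,G)_1)=0$. Since $\Hom(\Z^m,G)_1$ is a CW complex of dimension $q_m+n$, its top integral homology is free abelian, so it suffices to compute the rank. By Theorem \ref{cohomology Hom} and K\"unneth the rational top cohomology equals $(H^{d-n}(G/T;\Q)\otimes H^n(T;\Q)^{\otimes m})^W$, which by Lemma \ref{W-rep G/T}(1),(3) is $(\mathrm{sign}^{\otimes(m+1)})^W$; for $m$ even this is $(\mathrm{sign})^W=0$ since $W\neq 1$. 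Together with Lemma \ref{even top}, this forces $H_{q_m+n-1}(\Hom(\Z^m,G)_1)$ to be a finite torsion group.

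Next I would compute $H_{q_m+n-1}(\hocolim\widehat{F}_m)$. For $m$ even, $\widehat{F}_m$ is constant with value $S^{q_m}$, so the spectral sequence of Proposition \ref{spectral sequence} for $\hocolim\widehat{F}_m$ is concentrated in rows $q=0$ and $q=q_m$, with no differentials crossing between them; hence it collapses at $E^2$ and $H_{q_m+n-1}(\hocolim\widehat{F}_m)=H_{n-1}(C_*)$, where $C_p=\bigoplus_{\sigma\in P_p(\Delta)}\Z$ and $d(\sigma)=\sum_{\tau\lessdot\sigma}\epsilon_{\sigma,\tau}[W(\tau):W(\sigma)]\tau$ with $\epsilon_{\sigma,\tau}$ the simplicial incidence sign. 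Since $W(\Delta)=1$ and every codim-$1$ face $\tau$ of $\Delta$ lies on exactly one wall, $W(\tau)=\Z/2$, so $d(\Delta)=2\xi$ for $\xi:=\sum_\tau\epsilon_{\Delta,\tau}\tau$. A direct check shows $\xi\in\ker d_{n-1}$: for each codim-$2$ face $\mu$ the weight $|W(\mu)|/2$ in $d(\tau)$ depends only on $\mu$, so it factors out of the inner sum, which reduces to the vanishing of the standard simplicial $\partial\circ\partial$. Over $\Q$ the rescaling $\sigma\mapsto|W(\sigma)|\sigma$ identifies $C_*\otimes\Q$ with the ordinary chain complex of $\Delta$, which is acyclic in positive degrees, so $\ker d_{n-1}$ is torsion-free of rank one. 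Because $\xi$ has coordinates $\pm 1$, it generates $\ker d_{n-1}$, whence $H_{n-1}(C_*)\cong\Z/2$. By Proposition \ref{direct summand} this $\Z/2$ is a direct summand of $H_{q_m+n-1}(\Hom(\Z^m,G)_1)$.

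Finally I would show no further contribution appears, using the spectral sequence of Proposition \ref{spectral sequence} for $\hocolim F_m$. By Corollary \ref{dim fiber} and $\dim P(\Delta)=n$, the only potentially nonzero $E^\infty_{p,q}$ with $p+q=q_m+n-1$ lie at $(p,q)=(n-1,q_m)$ and $(n,q_m-1)$; the first is $\Z/2$ by the previous step. For the second, $E^1_{n,q_m-1}=H_{q_m-1}(G/T\times T^{m-1})$ is torsion-free, and no differential enters $E^r_{n,q_m-1}$ because the columns $p>n$ vanish. Hence each $E^r_{n,q_m-1}$ is the kernel of the outgoing $d^{r-1}$, so $E^\infty_{n,q_m-1}$ sits as a subgroup of a free abelian group and is itself torsion-free. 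Since Lemma \ref{even top} forces $E^\infty_{n,q_m-1}\otimes\Q=0$, we conclude $E^\infty_{n,q_m-1}=0$, and therefore $H_{q_m+n-1}(\Hom(\Z^m,G)_1)\cong\Z/2$. The main technical obstacle is the middle step: identifying $\xi$ as a generator of $\ker d_{n-1}$ and locating the factor of $2$ in $d(\Delta)$ as the sole source of torsion in the top line, both of which depend crucially on $W(\Delta)=1$ and $|W(\tau)|=2$ for every codim-$1$ face of the alcove.
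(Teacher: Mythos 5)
Your proof is correct and follows essentially the same route as the paper: compare the spectral sequences of Proposition \ref{spectral sequence} for $\hocolim F_m$ and $\hocolim\widehat{F}_m$, use Lemma \ref{Bousfield-Kan}/Proposition \ref{direct summand} to transfer the top line, compute $d^1$ on the top line from the indices $[W(\tau):W(\sigma)]$ (so $d^1(\Delta)=2\xi$), and invoke Lemma \ref{even top} to kill everything rational. You do fill in two points that the paper leaves terse: you justify that $\ker d_{n-1}=\Z\xi$ (via the rescaling $\sigma\mapsto|W(\sigma)|\sigma$, the very device the paper deploys later in the proof of Lemma \ref{acyclic}, combined with primitivity of $\xi$), and you explicitly show $E^\infty_{n,q_m-1}=0$ by noting it is a torsion-free subquotient of $H_{q_m-1}(G/T\times T^{m-1})$ of rank zero. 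These are the same ideas the paper is implicitly relying on, so there is no genuinely new mechanism; your write-up is simply a more detailed version of the same argument.
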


  \begin{proof}
    Let $E^r$ and $\widehat{E}^r$ denote the spectral sequences of Proposition \ref{spectral sequence} for $\hocolim F_m$ and $\hocolim\widehat{F}_m$, respectively. By Proposition \ref{fiber hocolim}, $E^1_{n,*}\cong H_*(G/T\times T^{m-1})$. Then by Corollary \ref{dim fiber}, $E^1$ is given below, where a possibly non-trivial part is shaded.

    \begin{figure}[H]
      \scalebox{0.9}{
      \begin{tikzpicture}[x=0.7cm, y=0.7cm, thick]
        \fill[black!10!white](0,0)--(0,5)--(10.5,5)--(10.5,0)--(0,0);
        \draw[->](0,0)--(11.5,0);
        \draw[->](0,0)--(0,6);
        \draw(0,5)--(10.5,5)--(10.5,0);
        \draw[dashed](0,4)--(10.5,4);
        \draw[dashed](0,3)--(10.5,3);
        \draw[dashed](8,0)--(8,5);
        \draw(8.1,4.1) node[above right]{$\Z$};
        \draw(8.1,3.1) node[above right]{$\Z^{(m-1)n}$};
        \draw(0,4.1) node[above left]{$q_m$};
        \draw(0,3.1) node[above left]{$q_m-1$};
        \draw(-0.2,0.1) node[above left]{$0$};
        \draw(0.8,-0.1) node[below left]{$0$};
        \draw(9.6,-0.1) node[below left]{$n$};
      \end{tikzpicture}
      }
    \end{figure}

    \noindent Thus $H_*(\Hom(\Z^m,G)_1)=0$ for $*>q_m+n$, and by Corollary \ref{Bousfield-Kan} and Lemma \ref{even top},
    \[
      H_{q_m+n-i}(\Hom(\Z^m,G)_1)\cong E^2_{n-i,q_m}\cong\widehat{E}^2_{n-i,q_m}
    \]
    for $i=0,1$. Let $\sigma$ be the only one $n$-face of $\Delta$, and let $\tau_0,\ldots,\tau_n$ be $(n-1)$-faces of $\Delta$. Then
    \[
      \widehat{E}^1_{n,q_m}=\Z\langle u\otimes\sigma\rangle\quad\text{and}\quad\widehat{E}^1_{n-1,q_m}=\Z\langle u\otimes\tau_i\mid i=0,\ldots,n\rangle,
    \]
    where $u$ is a generator of $H_{q_m}(S^{q_m})$. Since $|W(\sigma)|=1$ and $|W(\tau_i)|=2$ for each $i$, we have
    \[
      d^1\colon \widehat{E}^1_{n,q_m}\to\widehat{E}^1_{n-1,q_m},\quad u\otimes\sigma\mapsto 2\sum_{i=0}^n(-1)^iu\otimes\tau_i,
    \]
    implying $\widehat{E}^2_{n,q_m}=0$, which is proved by the same way as the second proof of Theorem \ref{top homology odd}. Since $\widehat{E}^1_{n-1,q_m}$ is a free abelian group, $\sum_{i=0}^n(-1)^iu\otimes\tau_i\in\widehat{E}^1_{n-1,q_m}$ is a non-trivial cycle. Then by Lemma \ref{even top}, $\widehat{E}^2_{n-1,q_m}\cong\Z/2$, completing the proof.
  \end{proof}

  \begin{proof}
    [Proof of Theorem \ref{main top}]
    Combine Theorems \ref{top homology odd}, \ref{top homology even} and Remark \ref{tophomology SU(2)}.
  \end{proof}


  \section{The complex $\Delta_p(k)$}\label{the complex}

  This section provides a combinatorial way to detect of torsion in the homology of $\Hom(\Z^m,G)_1$. Define a subcomplex of $\Delta$ by
  \[
    \Delta_p(k)=\{\sigma\in\Delta\mid p^{k+1}\text{ does not divide }|W|/|W(\sigma)|\}.
  \]
  Then there is a filtration of subcomplexes
  \[
    \Delta_p(0)\subset\Delta_p(1)\subset\cdots\subset\Delta_p(r)=\Delta,
  \]
  where $r$ is given by $|W|=p^rq$ with $(p,q)=1$.

  \begin{example}
    Let $G=SU(3)$, so that $\Delta$ is a 2-simplex and $|W|=6$. Then possibly non-trivial $\Delta_p(k)$ are $\Delta_2(0)$ and $\Delta_3(0)$. It is easy to see that $\Delta_2(0)$ is the 1-skeleton of $\Delta$ and $\Delta_3(0)$ is the 0-skeleton of $\Delta$.
  \end{example}

  The following lemma will play a fundamental role in connecting the mod $p$ homology of $\Delta_p(k)$ to $p$-torsion in the homology of $\Hom(\Z^2,G)_1$.

  \begin{lemma}
    \label{acyclic}
    $H_*(\hocolim\widehat{F}_2)$ is a finite abelian group for each $*>q_2$ and a finitely generated abelian group of rank 1 for $*=q_2$.
  \end{lemma}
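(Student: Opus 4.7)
The plan is to analyze the spectral sequence of Proposition \ref{spectral sequence} applied to $\hocolim\widehat{F}_2$. Since $m=2$ is even, the definition of $\widehat{F}_m$ gives $\widehat{F}_2(\sigma)=S^{q_2}$ for every $\sigma\in P(\Delta)$, so the $E^1$-term $\widehat{E}^1_{p,q}$ is concentrated in the two rows $q=0$ and $q=q_2$. Moreover $\hocolim\widehat{F}_2$ is a finite CW complex, hence its homology is finitely generated, and it only remains to pin down the ranks and the total degrees in which homology can appear.

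First I would show that all higher differentials hitting the top row vanish. Any $d^r$ leaving row $q_2$ lands in a row above it, hence is trivial; the only differentials that can hit row $q_2$ beyond $d^1$ itself are copies of $d^{q_2+1}$ starting in row $q=0$. But $d^1$ on row $q=0$ is the ordinary simplicial boundary of $\Delta$, since every map $\widehat{F}_2(\sigma>\tau)\colon S^{q_2}\to S^{q_2}$ acts trivially on $H_0$. Since $\Delta$ is contractible, this forces $\widehat{E}^2_{p,0}=0$ for all $p>0$, killing every possible source of $d^{q_2+1}$. Hence $\widehat{E}^\infty_{p,q_2}=\widehat{E}^2_{p,q_2}$ for every $p\ge 0$, and the filtration on $H_{p+q_2}(\hocolim\widehat{F}_2)$ collapses to this single group in each total degree $\ge q_2$.

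Next I would compute $\widehat{E}^2_{*,q_2}$ by unraveling $d^1$ on the top row. Writing $u_\sigma$ for the generator of $H_{q_2}(\widehat{F}_2(\sigma))\cong\Z$, the prescription of $\widehat{F}_2$ on morphisms together with the standard sign rule of the spectral sequence yields
\[
  d^1(u_\sigma)=\sum_{\substack{\tau<\sigma\\ \dim\tau=\dim\sigma-1}}(-1)^{[\tau:\sigma]}\,\frac{|W(\tau)|}{|W(\sigma)|}\,u_\tau.
\]
After tensoring with $\Q$, the diagonal rescaling $v_\sigma=|W(\sigma)|\,u_\sigma$ turns this into the standard simplicial boundary of $\Delta$, so $\widehat{E}^2_{*,q_2}\otimes\Q$ is the rational homology of the contractible complex $\Delta$: namely $\Q$ in bidegree $(0,q_2)$ and $0$ elsewhere. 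Combined with the previous paragraph this gives the result: $H_{q_2}(\hocolim\widehat{F}_2)\cong\widehat{E}^2_{0,q_2}$ has rank $1$, and for each $*>q_2$ the group $H_*(\hocolim\widehat{F}_2)\cong\widehat{E}^2_{*-q_2,q_2}$ has rank $0$, hence is finite. The step requiring the most care is the explicit identification of $d^1$ as the weighted simplicial boundary displayed above; once this formula is in hand, the rescaling trick and the vanishing of higher differentials are routine.
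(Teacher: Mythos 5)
Your proof is correct and, despite the spectral sequence packaging, it rests on exactly the same computational idea as the paper's proof: the diagonal rescaling $v_\sigma=|W(\sigma)|u_\sigma$ (the paper uses $\sigma\mapsto\frac{|W(\sigma)|}{|W|}u\times\sigma$, which differs only by the global factor $\frac{1}{|W|}$) turns the weighted boundary $u_\sigma\mapsto\sum_\tau(-1)^{[\tau:\sigma]}\frac{|W(\tau)|}{|W(\sigma)|}u_\tau$ into the ordinary simplicial boundary of the contractible simplex $\Delta$, forcing the rational homology in degrees $\ge q_2$ to be $\Q$ at $q_2$ and zero above. The paper phrases this as a direct chain isomorphism $C_*(\Delta;\Q)\to C_{*+q_2}(\hocolim\widehat F_2;\Q)$ rather than via $\widehat E^1_{*,q_2}$, which sidesteps the (true but, in your write-up, extra) verification that no higher differentials touch the top row; since $q_2=\dim G>n$, the two chain models agree on the nose in total degree $\ge q_2$, so the two arguments are essentially interchangeable.
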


  \begin{proof}
    Since $\hocolim\widehat{F}$ is a CW complex of finite type, the statement is equivalent to that $H_*(\hocolim\widehat{F}_2;\Q)$ is trivial for each $*>q_2$ and isomorphic with $\Q$ for $*=q_2$. Then we compute the rational homology of $\hocolim\widehat{F}_2$. Let $C_*(-;R)$ denote the cellular chain complex over a field $R$. We assume that a sphere $S^{q_2}$ is given a cell decomposition $S^{q_2}=e^0\cup e^{q_2}$. Then for $0\le *\le n$, we can define a map
    \[
      \phi\colon C_*(\Delta;\Q)\to C_{*+q_2}(\hocolim\widehat{F}_2;\Q),\quad\sigma\mapsto \frac{|W(\sigma)|}{|W|}u\times\sigma
    \]
    where $\sigma$ is a face of $\Delta$ and $u$ is a generator of $C_{q_2}(S^{q_2};\Z)$. Then
    \[
      \phi(\partial\sigma)=\sum_{i=0}^{\dim\sigma}(-1)^i\frac{|W(\tau_i)|}{|W|}u\times\tau_i=\frac{|W(\sigma)|}{|W|}\sum_{i=0}^{\dim\sigma}(-1)^i\frac{|W(\tau_i)|}{|W(\sigma)|}u\times\tau_i=\partial\phi(\sigma),
    \]
    where $\partial\sigma=\sum_{i=0}^{\dim\sigma}(-1)^i\tau_i$. Thus $\phi$ is a chain map. Clearly, $\phi$ is bijective, and so $H_*(\Delta;\Q)\cong H_{*+q_2}(\hocolim\widehat{F}_2;\Q)$. Since $\Delta$ is contractible, the proof is done.
  \end{proof}

  Now we state the main theorem of this section.

  \begin{theorem}
    \label{Delta torsion F}
    The mod $p$ homology of $\Delta_p(k)$ is non-trivial for some $k$ if and only if $\hocolim\widehat{F}_2$ has $p$-torsion in homology.
  \end{theorem}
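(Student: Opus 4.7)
The plan is to compute $H_*(\hocolim\widehat{F}_2;\mathbb{F}_p)$ by refining the rational chain-level argument in Lemma~\ref{acyclic} to $p$-local coefficients, and then to compare it with the integral homology via universal coefficients.

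The starting point is that $\widehat{F}_2(\sigma)=S^{q_2}$ for every $\sigma$, so the spectral sequence of Proposition~\ref{spectral sequence} has only two non-trivial rows, $q=0$ and $q=q_2$. The only higher differential that could possibly mix them is $d^{q_2+1}$ originating in row $q=0$; but $E^2_{p,0}=H_p(\Delta)=0$ for $p\ge 1$, so the spectral sequence collapses at $E^2$ with no extension problems. For any coefficient ring $R$ one therefore obtains
\[
  H_j(\hocolim\widehat{F}_2;R)\cong H_j(\Delta;R)\oplus H_{j-q_2}(D_*;R),
\]
where $D_*$ is the weighted chain complex on the faces of $\Delta$ with $\partial\sigma=\sum_i(-1)^i\tfrac{|W(\tau_i)|}{|W(\sigma)|}\tau_i$ already used in the proof of Lemma~\ref{acyclic}.

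The key step is a $p$-local change of basis on $D_*$. Write $|W|/|W(\sigma)|=p^{k(\sigma)}u_\sigma$ with $u_\sigma$ coprime to $p$, and rescale each generator to $\sigma^\ast=u_\sigma^{-1}\sigma$ over $\Z_{(p)}$. By Lemma~\ref{W Z} we have $k(\tau)\le k(\sigma)$ whenever $\tau<\sigma$, so the rescaled differential reads $\partial\sigma^\ast=\sum_i(-1)^i p^{k(\sigma)-k(\tau_i)}\tau_i^\ast$. Reducing mod $p$ kills all terms with $k(\tau_i)<k(\sigma)$, and since $\{\sigma:k(\sigma)=k\}=\Delta_p(k)\setminus\Delta_p(k-1)$ we obtain a splitting of $\mathbb{F}_p$-chain complexes
\[
  D_*\otimes\mathbb{F}_p\;\cong\;\bigoplus_{k\ge 0}C_*(\Delta_p(k),\Delta_p(k-1);\mathbb{F}_p).
\]

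To conclude, Lemma~\ref{acyclic} together with universal coefficients gives $\dim_{\mathbb{F}_p}H_*(\hocolim\widehat{F}_2;\mathbb{F}_p)=2+2T$, where $T$ is the total $\mathbb{F}_p$-dimension of the $p$-torsion of $H_*(\hocolim\widehat{F}_2)$. Combining with the two displayed splittings yields $\sum_k\dim H_*(\Delta_p(k),\Delta_p(k-1);\mathbb{F}_p)=1+2T$. Since $\Delta_p(0)$ is non-empty, a straightforward induction on $k$ using the long exact sequences of the pairs $(\Delta_p(k),\Delta_p(k-1))$ then shows that this sum equals $1$ if and only if every $\Delta_p(k)$ is $\mathbb{F}_p$-acyclic, giving the desired biconditional. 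The main technical point is the mod $p$ splitting of $D_*$, which depends on both the monotonicity in Lemma~\ref{W Z} and careful bookkeeping of the $p$-local unit factors $u_\sigma$.
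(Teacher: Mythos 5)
Your proof is correct and it holds up under scrutiny; I checked the rescaling computation, the monotonicity $k(\tau)\le k(\sigma)$ for $\tau<\sigma$ coming from Lemma~\ref{W Z} (subgroup inclusion gives divisibility of indices, hence of $p$-valuations), the chain-level splitting, the universal-coefficients count $2+2T$, and the final induction over the filtration using the pair sequences. However, it takes a genuinely different route from the paper's, and the comparison is worth recording. The paper proves both directions by working directly with cycles and boundaries: it introduces chain maps $\phi_k\colon C_*(\Delta_p(k);\Z/p)\to C_{*+q_2}(\hocolim\widehat{F}_2;\Z/p)$, $\sigma\mapsto\frac{|W(\sigma)|}{p^{r-k}}u\times\sigma$, which annihilate $C_*(\Delta_p(k-1);\Z/p)$, and in the ``only if'' direction decomposes a mod $p$ torsion representative as $\widehat{\alpha}=\widehat{\alpha}_0+\cdots+\widehat{\alpha}_r$ according to the $p$-valuation $k(\sigma)$ and kills each piece separately. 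Your proof isolates the same structural fact — that over $\mathbb{F}_p$ the weighted complex $D_*$ becomes block-diagonal in the $p$-valuation — and states it as a clean isomorphism of chain complexes
$D_*\otimes\mathbb{F}_p\cong\bigoplus_{k\ge 0}C_*(\Delta_p(k),\Delta_p(k-1);\mathbb{F}_p)$,
after which the biconditional drops out of a single numerical identity $1+\sum_k\dim H_*(\Delta_p(k),\Delta_p(k-1);\mathbb{F}_p)=2+2T$ together with the easy observation that the left-hand sum equals $1$ iff every $\Delta_p(k)$ is $\mathbb{F}_p$-acyclic. What you gain: both directions are dispatched at once, and you avoid the slightly delicate point in the paper's ``if'' direction where one must separately argue that $\phi_k(\alpha)$ is not the mod $p$ reduction of a representative of an infinite-order integral class. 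What you pay for it: you invoke universal coefficients and the vanishing $H_j(\hocolim\widehat{F}_2)=0$ for $0<j<q_2$ (both true, but extra inputs), whereas the paper's argument is pure chain-level algebra. One small presentational remark: the collapse-with-no-extensions you invoke is automatic here because $q_2>n$, so the two nonzero rows of the $E^1$-page never contribute to the same total degree; alternatively you can avoid the spectral sequence altogether and simply observe, as in the proof of Lemma~\ref{acyclic}, that the cellular chain complex of $\hocolim\widehat{F}_2$ with the cell structure $S^{q_2}=e^0\cup e^{q_2}$ splits on the nose as $C_*(\Delta;R)\oplus D_*\otimes R$ shifted by $q_2$.
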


  \begin{proof}
    We assume that a sphere $S^{q_2}$ is given a cell decomposition $S^{q_2}=e^0\cup e^{q_2}$. Consider a map
    \[
      \phi_k\colon C_*(\Delta_p(k);\Z/p)\to C_{*+q_2}(\hocolim\widehat{F}_2;\Z/p),\quad\sigma\mapsto\frac{|W(\sigma)|}{p^{r-k}}u\times\sigma,
    \]
    where $\sigma$ is a face of $\Delta_p(k)$, $u$ is a generator of $C_{q_2}(S^{q_2};\Z/p)\cong\Z/p$ and $r$ is given by $|W|=p^rq$ with $(p,q)=1$. Then
    \[
      \phi_k(\partial\sigma)=\sum_{i=0}^{\dim\sigma}(-1)^i\frac{|W(\tau_i)|}{p^{r-k}}u\times\tau_i=\frac{|W(\sigma)|}{p^{r-k}}\sum_{i=0}^{\dim\sigma}(-1)^i\frac{|W(\tau_i)|}{|W(\sigma)|}u\times\tau_i=\partial\phi_k(\sigma)
    \]
    and so $\phi_k$ is a chain map.

    Let $P(K)$ denote the face poset of a simplicial complex $K$ as in Section \ref{the functor}. Suppose that $\widetilde{H}_*(\Delta_p(k);\Z/p)\ne 0$. Then there is a non-boundary cycle
    \[
      \alpha=\sum_{\sigma\in P(\Delta_p(k))}a_\sigma\sigma
    \]
    in the reduced cellular chain complex $\widetilde{C}_*(\Delta_p(k);\Z/p)$. We may assume that the homology class $[\alpha]$ does not lie in the image of the natural map
    \[
      H_*(\Delta_p(k-1);\Z/p)\to H_*(\Delta_p(k);\Z/p)
    \]
    because we can replace $\Delta_p(k)$ with $\Delta_p(l)$ for some $l<k$ otherwise. Since $\phi_k$ annihilates $C_*(\Delta_p(k-1);\Z/p)$,
    \[
      \phi_k(\alpha)=\sum_{\sigma\in P(\Delta_p(k))-P(\Delta_p(k-1))}\frac{|W(\sigma)|}{p^{r-k}}a_\sigma(u\times\sigma)\in C_{*+q_2}(\hocolim\widehat{F}_2;\Z/p)
    \]
    which is a cycle because $\alpha$ is a cycle and $\phi_k$ is a chain map. Suppose
    \[
      \phi_k(\alpha)=\partial\left(\sum_{\tau\in P(\Delta_p(k))}b_\tau(u\times\tau)\right).
    \]
    Then we have
    \begin{align*}
      \phi_k(\alpha)&=\partial\left(\sum_{\tau\in P(\Delta_p(k))-P(\Delta_p(k-1))}b_\tau(u\times\tau)\right)\\
      &=\partial\phi_k\left(\sum_{\tau\in P(\Delta_p(k))-P(\Delta_p(k-1))}\frac{p^{r-k}}{|W(\tau)|}b_\tau\tau\right).
    \end{align*}
    By definition, $\phi_k$ is injective on the subgroup of $C_*(\Delta_p(k);\Z/p)$ generated by faces in $P(\Delta_p(k))-P(\Delta_p(k-1))$. So we obtain that $\alpha$ is homologous to a cycle in $\Delta_p(k-1)$. Since $\alpha$ is not homologous to a non-boundary cycle in $\Delta_p(k-1)$ by assumption, $\alpha$ is homologous to a boundary in $\Delta_p(k)$. Then $\alpha$ itself is a boundary, which is a contradiction. Then $\phi_k(\alpha)$ is a non-boundary cycle in $C_{*+q_2}(\hocolim\widehat{F}_2;\Z/p)$. Moreover, by definition, $\phi_k(\alpha)$ is not the mod $p$ reduction of a representative of an integral homology class of infinite order. Thus by Lemma \ref{acyclic}, $H_*(\hocolim\widehat{F}_2)$ has $p$-torsion in homology.

    Assume $\widetilde{H}_*(\Delta_p(k);\Z/p)=0$ for each $k$. So we suppose $\hocolim\widehat{F}_2$ has $p$-torsion in homology and derive a contradiction. The $E^2$-term of the spectral sequence of Proposition \ref{spectral sequence} for $\hocolim\widehat{F}_2$ is illustrated as follows, where possibly non-trivial parts are shaded.
    \begin{center}
    \scalebox{0.9}{
    \begin{tikzpicture}[x=0.7cm, y=0.7cm, thick]
      \fill[black!10!white](0,0)--(1,0)--(1,1)--(0,1)--(0,0);
      \fill[black!10!white](0,4)--(9,4)--(9,3)--(0,3)--(0,4);
      \draw[->](0,0)--(10,0);
      \draw[->](0,0)--(0,5);
      \draw(1,0)--(1,1)--(0,1);
      \draw(0,4)--(9,4)--(9,3)--(0,3);
      \draw[dashed](9,0)--(9,3);
      \draw(0,3) node[above left]{$q_2$};
      \draw(0,0.1) node[above left]{$0$};
      \draw(0.85,-0.1) node[below left]{$0$};
      \draw(9,-0.1) node[below left]{$n$};
    \end{tikzpicture}}
    \end{center}
    Then $H_*(\hocolim\widehat{F}_2)$ has $p$-torsion only for $*\ge q_2$. So there is a cycle
    representing a $p$-torsion element in the homology of $\hocolim\widehat{F}_2$. We may assume that its mod $p$ reduction
    \[
      \widehat{\alpha}=\sum_{\sigma\in P(\Delta)}\bar{a}_\sigma(u\times\sigma)
    \]
    represents a non-trivial mod $p$ homology class of $\hocolim\widehat{F}_2$. Let
    \[
      \widehat{\alpha}_k=\sum_{\sigma\in P(\Delta_p(k))-P(\Delta_p(k-1))}\bar{a}_\sigma(u\times\sigma).
    \]
    By definition, $\widehat{\alpha}=\widehat{\alpha}_0+\cdots+\widehat{\alpha}_r$. Clearly, $\partial\widehat{\alpha}_k$ is a linear combination of simplices in $P(\Delta_p(k))$. For $\sigma\in P(\Delta_p(k))-P(\Delta_p(k-1))$ and $\tau\in P(\Delta_p(k-1))$, if $\sigma>\tau$, then $|W(\tau)|/|W(\sigma)|\equiv 0\mod p$, implying that $\partial\widehat{\alpha}_k$ is actually a linear combination of simplices in $P(\Delta_p(k))-P(\Delta_p(k-1))$. Thus since $\partial\widehat{\alpha}=0$, we get
    \[
      \partial\widehat{\alpha}_k=0
    \]
    for each $k$. Let
    \[
      \alpha_k=\sum_{\sigma\in P(\Delta_p(k))-P(\Delta_p(k-1))}\frac{p^{r-k}}{|W(\sigma)|}a_\sigma\sigma\in\widetilde{C}_*(\Delta_p(k);\Z/p).
    \]
    Then $\phi_k(\alpha_k)=\widehat{\alpha}_k$, implying $\partial\phi_k(\alpha_k)=0$. So we get $\partial\alpha_k\in\widetilde{C}_*(\Delta_p(k-1);\Z/p)$. Since $\widetilde{H}_*(\Delta_p(k-1);\Z/p)=0$, there is $\beta_k\in\widetilde{C}_*(\Delta_p(k-1);\Z/p)$ such that $\partial\alpha_k=\partial\beta_k$. Then since the map $\phi_k$ annihilates $C_*(\Delta_p(k-1);\Z/p)$, we get
    \[
      \phi_k(\alpha_k-\beta_k)=\phi_k(\alpha_k)=\widehat{\alpha}_k.
    \]
    So since $\widetilde{H}_*(\Delta_p(k-1);\Z/p)=0$, $\alpha_k-\beta_k$ is a boundary, implying $\widehat{\alpha}_k$ is a boundary. Thus since $k$ is arbitrary, $\widehat{\alpha}$ is a boundary, which is a contradiction. Therefore $\hocolim\widehat{F}_2$ does not have $p$-torsion in homology, completing the proof.
  \end{proof}

  \begin{corollary}
    \label{Delta torsion}
    If the mod $p$ homology of $\Delta_p(k)$ is non-trivial for some $k$, then $\Hom(\Z^m,G)_1$ has $p$-torsion in homology for each $m\ge 2$.
  \end{corollary}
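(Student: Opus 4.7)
The plan is to chain together three results already established in the paper. Given the hypothesis that $\widetilde{H}_*(\Delta_p(k);\Z/p)\neq 0$ for some $k$, the first step is to invoke Theorem \ref{Delta torsion F} directly, which immediately gives that $\hocolim\widehat{F}_2$ has $p$-torsion in homology. This is the ``combinatorics to topology'' bridge that the preceding section was built to provide.

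Next, I would promote this torsion from $\hocolim\widehat{F}_2$ to $\Hom(\Z^2,G)_1$ using Proposition \ref{direct summand} with $m=2$. That proposition asserts $H_*(\hocolim\widehat{F}_2)$ is a direct summand of $H_*(\Hom(\Z^2,G)_1)$ in degrees $*\ge q_2$. The $p$-torsion produced in Theorem \ref{Delta torsion F} lives precisely in these degrees, since the chain map $\phi_k$ used to produce the torsion cycles shifts chain degree by exactly $q_2$ (each generator $\widehat{F}_2(\sigma)$ is either a point or $S^{q_2}$, so the whole homotopy colimit is $(q_2-1)$-connected). Hence $\Hom(\Z^2,G)_1$ has $p$-torsion in homology.

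Finally, to pass from $m=2$ to arbitrary $m\ge 2$, I would use that $\Hom(\Z^2,G)_1$ is a retract of $\Hom(\Z^m,G)_1$, as noted immediately after Theorem \ref{main top}. Concretely, the projection $\Z^m\twoheadrightarrow\Z^2$ onto the first two coordinates and its splitting $\Z^2\hookrightarrow\Z^m$ induce continuous maps
\[
  \Hom(\Z^2,G)_1\to\Hom(\Z^m,G)_1\to\Hom(\Z^2,G)_1
\]
whose composite is the identity, so $H_*(\Hom(\Z^2,G)_1)$ is a direct summand of $H_*(\Hom(\Z^m,G)_1)$ and the $p$-torsion in the former forces $p$-torsion in the latter.

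There is essentially no obstacle here; the work was done in Theorem \ref{Delta torsion F} and Proposition \ref{direct summand}. The only point requiring a modicum of care is verifying that the torsion produced lies in the range $*\ge q_2$ where the direct summand statement applies, but this is automatic from the construction of $\widehat{F}_2$.
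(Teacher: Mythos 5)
Your proof is correct and follows exactly the three-step chain the paper uses: Theorem \ref{Delta torsion F} to get torsion in $\hocolim\widehat{F}_2$, Proposition \ref{direct summand} with $m=2$ to transfer it to $\Hom(\Z^2,G)_1$, then the retraction to reach general $m\ge 2$. Your additional check that the torsion lies in degrees $\ge q_2$ (so that Proposition \ref{direct summand} applies) is a valid and slightly more careful touch; the paper leaves this implicit, though it is also covered by Lemma \ref{acyclic}.
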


  \begin{proof}
    By Theorem \ref{Delta torsion F}, if the mod $p$ homology of $\Delta_p(k)$ is non-trivial for some $k$, then $\hocolim\widehat{F}_2$ has $p$-torsion in homology. Thus by Proposition \ref{direct summand}, $\Hom(\Z^2,G)_1$ has $p$-torsion in homology too. Since $\Hom(\Z^2,G)_1$ is a retract of $\Hom(\Z^m,G)_1$ for $m\ge 2$, the proof is complete.
  \end{proof}


  \section{Computation of torsion in homology}\label{Computation of torsion in homology}

  This section computes torsion in the homology of $\Hom(\Z^m,G)_1$ for $G$ of type $A$, $D$ and exceptional type by describing the complex $\Delta_p(k)$ in terms of the extended Dynkin diagram of $G$.

  Note that every facet of $\Delta$ corresponds to a simple root or the highest root and every $i$-face is an intersection of $n-i$ facets. Then there is a one-to-one correspondence between $i$-faces of $\Delta$ and choices of $n-i$ simple roots and the highest root. Recall that the extended Dynkin diagram of $G$ is a graph whose vertices are simple roots and the highest root. We will mean by a colored extended Dynkin diagram of $G$ an extended Dynkin diagram of $G$ whose vertices are colored by black and white. Here is an example of a colored extended Dynkin diagram of $Spin(12)$.

  \begin{figure}[H]
    \scalebox{0.9}{
    \begin{tikzpicture}[x=0.7cm, y=0.7cm, thick]
      \draw(-1,-1)--(0,0)--(-1,1);
      \draw(0,0)--(2.8,0);
      \draw(3.8,-1)--(2.8,0)--(3.8,1);
      \fill[white](0,0) circle[radius=2.5pt];
      \draw(0,0) circle[radius=2.5pt];
      \fill(-1,-1) circle[radius=2.5pt];
      \fill[white](-1,1) circle[radius=2.5pt];
      \draw(-1,1) circle[radius=2.5pt];
      \fill(1.4,0) circle[radius=2.5pt];
      \fill(2.8,0) circle[radius=2.5pt];
      \fill[white](3.8,-1) circle[radius=2.5pt];
      \draw(3.8,-1) circle[radius=2.5pt];
      \fill[white](3.8,1) circle[radius=2.5pt];
      \draw(3.8,1) circle[radius=2.5pt];
    \end{tikzpicture}}
  \end{figure}

  Let $\mathbb{D}_i$ be the set of all colored extended Dynkin diagram with $i+1$ white vertices and $n-i$ black vertices, where the extended Dynkin diagram of $G$ has $n+1$ vertices. Then by the observation above, we get:

  \begin{lemma}
    \label{Psi_k}
    There is a bijection
    \[
      \Psi_i\colon P_i(\Delta)\xrightarrow{\cong}\mathbb{D}_i
    \]
    which sends an $i$-face $\sigma\in P_i(\Delta)$ to a colored extended Dynkin diagram such that only $n-i$ vertices corresponding to $\sigma$ are black-colored.
  \end{lemma}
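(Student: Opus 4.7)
The plan is essentially a bookkeeping argument: identify facets of $\Delta$ with vertices of the extended Dynkin diagram, and then use the simplex combinatorics of $\Delta$ to read off its $i$-faces.

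First I would recall the definition
\[
  \Delta = \{x \in \mathfrak{t} \mid \alpha_1(x) \geq 0, \ldots, \alpha_n(x) \geq 0,\ \widetilde{\alpha}(x) \leq 1\},
\]
so that $\Delta$ is an $n$-simplex whose $n+1$ facets are precisely the subspaces cut out by $\alpha_1 = 0, \ldots, \alpha_n = 0$, and $\widetilde{\alpha} = 1$. This gives a canonical labelling of the facets of $\Delta$ by the set $\{\alpha_1,\ldots,\alpha_n,\widetilde{\alpha}\}$, which is exactly the vertex set of the extended Dynkin diagram of $G$.

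Next, I would invoke the standard fact that in an $n$-simplex every $i$-face is the intersection of a unique $(n-i)$-element collection of facets, and conversely any such collection intersects in a unique $i$-face. Combined with the labelling above, an $i$-face $\sigma \in P_i(\Delta)$ determines, and is determined by, the $(n-i)$-element subset of $\{\alpha_1,\ldots,\alpha_n,\widetilde{\alpha}\}$ consisting of those roots whose associated facet contains $\sigma$. (Note that this is consistent with the definition of $\Phi(\sigma)$ from Section \ref{Triangulation of a maximal torus}.)

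I then define $\Psi_i(\sigma)$ to be the colored extended Dynkin diagram obtained by coloring these $n-i$ vertices black and the remaining $i+1$ vertices white; by construction the result lies in $\mathbb{D}_i$. The inverse map sends a coloring in $\mathbb{D}_i$ to the intersection of the facets indexed by its black vertices, which is an $i$-face of $\Delta$ since the $n-i$ facets are distinct. These two assignments are clearly mutually inverse, so $\Psi_i$ is a bijection. There is no substantive obstacle here; the only thing to be careful about is the off-by-one shift between vertices (of which there are $n+1$) and the dimension $i$ of the face, so that $\dim\sigma = i$ corresponds to $n-i$ black vertices and $i+1$ white ones.
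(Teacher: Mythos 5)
Your proof is correct and takes essentially the same approach as the paper, which deduces the bijection directly from the observation that facets of $\Delta$ are labelled by simple roots and the highest root, and each $i$-face is the intersection of a unique set of $n-i$ facets. The paper states this without a separate proof; your writeup merely spells out the same correspondence, including the $n-i$ black versus $i+1$ white count, with no gaps.
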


  We will compute the mod $p$ homology of $\Delta_p(k)$ by specifying $\Psi_i(P_i(\Delta_p(k)))$. Let $\Gamma$ be a colored extended Dynkin diagram of $G$. For an induced subgraph $\Theta$ of $\Gamma$, let $W_\Theta$ denote the subgroup of $W$ generated by simple reflections corresponding to the vertices of $\Theta$, where we put $W_\emptyset=1$. By definition, we have:

  \begin{lemma}
    \label{W_Gamma}
    \begin{enumerate}
      \item If a colored extended Dynkin diagram $\Gamma$ is the disjoint union of induced subgraphs $\Gamma_1,\ldots,\Gamma_k$ after removing all white vertices, then
      \[
        W_\Gamma=W_{\Gamma_1}\times\cdots\times W_{\Gamma_k}.
      \]

      \item For an $i$-face $\sigma$ of $\Delta$, there is an isomorphism
      \[
        W(\sigma)\cong W_{\Psi_i(\sigma)}.
      \]
    \end{enumerate}
  \end{lemma}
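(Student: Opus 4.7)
The plan is to prove (2) first since it follows immediately from unwinding the definitions of $\Psi_i$ and $W(\sigma)$, and then to prove (1) using standard facts about root systems encoded in the extended Dynkin diagram.

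For (2), by the construction of $\Psi_i$ in Lemma~\ref{Psi_k}, the $n-i$ black vertices of $\Psi_i(\sigma)$ correspond to the facets of $\Delta$ whose intersection is $\sigma$, that is, to the walls of the Stiefel diagram containing $\sigma$. By the definitions of $\Phi(\sigma)$ and $W(\sigma)$ in Section~\ref{Triangulation of a maximal torus}, these walls are exactly those corresponding to the roots whose linear reflections generate $W(\sigma)$. On the other hand, these are by definition the reflections generating $W_{\Psi_i(\sigma)}$. Hence $W(\sigma)$ and $W_{\Psi_i(\sigma)}$ are generated by the same set of reflections in $W$, so they coincide.

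For (1), the key input is the classical fact that two vertices of the extended Dynkin diagram of a simple Lie group are joined by an edge if and only if the corresponding roots are non-orthogonal. Suppose $\alpha$ is a vertex of $\Gamma_i$ and $\beta$ is a vertex of $\Gamma_j$ with $i\neq j$. Since $\Gamma_i$ and $\Gamma_j$ are distinct connected components of the subdiagram of $\Gamma$ obtained by deleting the white vertices, there is no edge joining $\alpha$ and $\beta$ in $\Gamma$; hence $\alpha \perp \beta$. It follows that the reflections $s_\alpha$ and $s_\beta$ commute, and more strongly, $W_{\Gamma_i}$ fixes the span of the roots of $\Gamma_j$ pointwise for $i\neq j$. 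The multiplication map
\[
W_{\Gamma_1}\times\cdots\times W_{\Gamma_k}\to W_\Gamma
\]
is therefore a well-defined surjective homomorphism, and it is injective because each factor $W_{\Gamma_i}$ acts faithfully on the span of the roots of $\Gamma_i$ while the other factors act trivially there.

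The main technical point is the orthogonality of roots corresponding to vertices in distinct connected components after removing white vertices, which I would verify by appealing to the explicit edges of the extended Dynkin diagrams of the classical and exceptional types; once this is granted, the remainder is routine Coxeter group theory together with a direct comparison of generating sets.
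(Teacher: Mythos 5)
Your proof is correct. The paper states this lemma without proof, remarking only that it follows ``by definition''; your argument supplies precisely the details one would expect: for (2), you unwind the definitions of $\Phi(\sigma)$, $W(\sigma)$, $\Psi_i$, and $W_{\Psi_i(\sigma)}$ to see both groups are generated by the same set of reflections, and for (1), you invoke the orthogonality of roots labelling vertices in distinct connected components (once the white vertices are deleted) together with the faithfulness of the action of $W$ on $\t$ to get both surjectivity and injectivity of the multiplication map.
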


  Let $v_1,\ldots,v_i$ be vertices of the extended Dynkin diagram. We denote by $v_{i_1}\cdots v_{i_k}$ an $(k-1)$-face $\sigma$ such that the white vertices of the extended Dynkin diagram $\Psi_{i-1}(\sigma)$ are $v_{i_1},\ldots,v_{i_k}$. For example, as for $G=SU(3)$, 13 corresponds the following colored extended Dynkin diagram.

  \begin{figure}[H]
    \scalebox{0.9}{
    \begin{tikzpicture}[x=0.7cm, y=0.7cm, thick]
      \draw(0,0) circle[radius=1];
      \fill(0.87,0.5) circle[radius=2.5pt];
      \fill[white](-0.87,0.5) circle[radius=2.5pt];
      \draw(-0.87,0.5) circle[radius=2.5pt];
      \fill[white](0,-1) circle[radius=2.5pt];
      \draw(0,-1) circle[radius=2.5pt];
      \draw(0.97,0.5) node[right]{2};
      \draw(-0.97,0.5) node[left]{1};
      \draw(0,-1.1) node[below]{3};
    \end{tikzpicture}}
  \end{figure}


  \subsection{Type $A$}

  Throughout this subsection, let $G=SU(n+1)$. Recall that the extended Dynkin diagram of $SU(n+1)$ is the cycle graph with $n+1$ vertices, denoted by $C_{n+1}$.

  \begin{example}
    \label{SU(3)}
    The following figure shows all colored extended Dynkin diagrams of $SU(3)$, where $\Delta$ is a 2-simplex. The left three graphs correspond to vertices, the middle three graphs correspond to edges, and the most right graph corresponds to the 2-cell.

    \begin{figure}[H]
      \scalebox{0.9}{
      \begin{tikzpicture}[x=0.7cm, y=0.7cm, thick]
        \draw(0,0) circle[radius=1];
        \draw(3,0) circle[radius=1];
        \draw(6,0) circle[radius=1];
        \fill(0.87,0.5) circle[radius=2.5pt];
        \fill(-0.87,0.5) circle[radius=2.5pt];
        \fill[white](0,-1) circle[radius=2.5pt];
        \draw(0,-1) circle[radius=2.5pt];
        \fill[white](3.87,0.5) circle[radius=2.5pt];
        \fill(2.13,0.5) circle[radius=2.5pt];
        \fill(3,-1) circle[radius=2.5pt];
        \draw(3.87,0.5) circle[radius=2.5pt];
        \fill(6.87,0.5) circle[radius=2.5pt];
        \fill[white](5.13,0.5) circle[radius=2.5pt];
        \fill(6,-1) circle[radius=2.5pt];
        \draw(5.13,0.5) circle[radius=2.5pt];
        \draw(9,0) circle[radius=1];
        \draw(12,0) circle[radius=1];
        \draw(15,0) circle[radius=1];
        \fill[white](9.87,0.5) circle[radius=2.5pt];
        \fill(8.13,0.5) circle[radius=2.5pt];
        \fill[white](9,-1) circle[radius=2.5pt];
        \draw(9,-1) circle[radius=2.5pt];
        \draw(9.87,0.5) circle[radius=2.5pt];
        \fill[white](12.87,0.5) circle[radius=2.5pt];
        \fill[white](11.13,0.5) circle[radius=2.5pt];
        \fill(12,-1) circle[radius=2.5pt];
        \draw(12.87,0.5) circle[radius=2.5pt];
        \draw(11.13,0.5) circle[radius=2.5pt];
        \fill(15.87,0.5) circle[radius=2.5pt];
        \fill[white](14.13,0.5) circle[radius=2.5pt];
        \fill[white](15,-1) circle[radius=2.5pt];
        \draw(14.13,0.5) circle[radius=2.5pt];
        \draw(15,-1) circle[radius=2.5pt];
        \draw(18,0) circle[radius=1];
        \fill[white](18.87,0.5) circle[radius=2.5pt];
        \draw(18.87,0.5) circle[radius=2.5pt];
        \fill[white](17.13,0.5) circle[radius=2.5pt];
        \draw(17.13,0.5) circle[radius=2.5pt];
        \fill[white](18,-1) circle[radius=2.5pt];
        \draw(18,-1) circle[radius=2.5pt];
      \end{tikzpicture}}
    \end{figure}
  \end{example}

  Now we describe faces of $\Delta_p(0)$. Let $C(i)$ be the following graph with $p^i-1$ black vertices and one gray vertex.

  \begin{figure}[H]
    \scalebox{0.9}{
    \begin{tikzpicture}[x=0.7cm, y=0.7cm, thick]
      \draw(0,0.2) node[above]{$1$};
      \draw(1.5,0.2) node[above]{$2$};
      \draw(4.5,0.2) node[above]{$p^i-1$};
      \draw(6,0.2) node[above]{$p^i$};
      \draw(0,0)--(2.5,0);
      \draw(4,0)--(6,0);
      \draw[dashed](2.5,0)--(4,0);
      \fill(0,0) circle[radius=2.5pt];
      \fill(1.5,0) circle[radius=2.5pt];
      \fill(4.5,0) circle[radius=2.5pt];
      \fill[lightgray](6,0) circle[radius=2.5pt];
      \draw(6,0) circle[radius=2.5pt];
    \end{tikzpicture}}
  \end{figure}

  \begin{proposition}
    \label{cycle graph}
    Let $n>1$ and $n+1=\sum_{j=0}^la_jp^j$ be the $p$-adic expansion, where $0\le a_j<p$ for each $j$. A colored extended Dynkin diagram of $SU(n+1)$ is in $\Psi_i(P_i(\Delta_p(0)))$ if and only if it is obtained by gluing $a_j$ copies of $C(j)$ for $j=0,\ldots,l$ such that $i+1$ gray vertices are replaced by white-colored vertices and other $\sum_{j=0}^la_j-i$ gray vertices are replaced by black-colored by vertices.
  \end{proposition}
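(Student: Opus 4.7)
The plan is to translate the defining condition $p \nmid |W|/|W(\sigma)|$ of $\sigma \in \Delta_p(0)$ into a numerical statement about the colouring $\Psi_i(\sigma)$ of the cycle $C_{n+1}$, and then recognise that statement as the combinatorial gluing described in the proposition.

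First, I would use Lemma \ref{W_Gamma} to compute $W(\sigma)$. Given an $i$-face $\sigma$, the diagram $\Psi_i(\sigma)$ has $i+1$ white and $n-i$ black vertices on the cycle $C_{n+1}$; removing the white vertices leaves $i+1$ arcs of consecutive black vertices of lengths $m_1,\dots,m_{i+1}\ge 0$ with $\sum_j m_j = n-i$, each a type-$A_{m_j}$ subdiagram. Thus $W(\sigma) \cong \prod_j \Sigma_{m_j+1}$, and setting $k_j = m_j+1$ gives
\[
\frac{|W|}{|W(\sigma)|} \;=\; \frac{(n+1)!}{\prod_j k_j!} \;=\; \binom{n+1}{k_1,\dots,k_{i+1}},\qquad \sum_j k_j = n+1.
\]
So the condition $\sigma \in \Delta_p(0)$ becomes exactly that this multinomial coefficient is coprime to $p$.

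Next I would invoke Kummer's theorem (applied iteratively to the multinomial): coprimality holds if and only if the base-$p$ addition $k_1 + \cdots + k_{i+1} = n+1$ has no carries. Writing $k_j = \sum_t b_{j,t}\, p^t$ with $0 \le b_{j,t} < p$, the no-carry condition is the digit-wise equality $\sum_j b_{j,t} = a_t$ for each $t$. Geometrically, this data is precisely a subdivision of each arc of length $k_j$ into sub-paths of prime-power length, with exactly $b_{j,t}$ sub-paths of length $p^t$; identifying each sub-path (with a marked endpoint) with a copy of $C(t)$ whose gray vertex is the mark, one exhibits the entire cycle $C_{n+1}$ as a cyclic concatenation of $a_t$ copies of $C(t)$ for each $t = 0,\dots,l$. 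In this presentation, the $i+1$ white vertices of $\sigma$ occupy the gray vertex opening each of the original $i+1$ arcs, while the remaining gray vertices become black, as claimed. The converse direction (starting from such a gluing, reading off $b_{j,t}$ from the sub-path lengths) is immediate.

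The main technical point to handle with care will be this last bijective bookkeeping: one must verify that the cyclic gluing data (an arrangement of $a_j$ copies of $C(j)$ for each $j$, together with a choice of $i+1$ of the resulting gray vertices to become the white markings) corresponds uniquely to the digit data $(b_{j,t})$ with $\sum_j b_{j,t} = a_t$, and that no colouring information is lost or double-counted in passing between the two pictures. Once Kummer's theorem supplies the core numerical equivalence, the remainder is a careful cyclic bookkeeping argument.
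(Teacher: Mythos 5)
Your proof is correct and takes a genuinely different, arguably cleaner, route than the paper. The paper proves the ``if'' direction by directly computing $|W|/|W_{\Gamma'}|$ as a product of binomial coefficients $\binom{n+1-\cdots}{p^j}$ and invoking Lucas's theorem on each factor, and proves the ``only if'' direction by induction on $n$: it isolates one arc $\Theta$ of the cycle, applies Lucas to $\binom{n+1}{n'+1}$ to control the $p$-adic digits of that arc's length, contracts $\Theta$ to a point to get a smaller cycle graph, and appeals to the inductive hypothesis for $SU(n-n')$. You instead package the entire condition $p\nmid |W|/|W(\sigma)|$ as coprimality of a single multinomial coefficient $\binom{n+1}{k_1,\dots,k_{i+1}}$ and invoke Kummer's theorem once: no carries in the base-$p$ addition $k_1+\cdots+k_{i+1}=n+1$ is exactly the digit-wise condition $\sum_j b_{j,t}=a_t$, which in turn is equivalent to the existence of a gluing decomposition. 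This avoids the induction entirely and makes the role of the $p$-adic digits transparent in one step, at the cost of having to verify the combinatorial dictionary between digit data and gluings (which you flag, but do not fully carry out).

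One small caution on the bookkeeping you defer: the correspondence between a colored diagram and the digit data $(b_{j,t})$ is canonical (the $b_{j,t}$ are forced to be the base-$p$ digits of $k_j$ since $\sum_j b_{j,t}=a_t<p$ bounds each $b_{j,t}$ below $p$), but the correspondence between digit data and an actual cyclic arrangement of $C(j)$-pieces is not a bijection — the order of pieces within an arc is unconstrained, and different orders yield the same coloring after the interior gray vertices are painted black. The proposition only asserts existence of some gluing, so this many-to-one-ness is harmless, but the phrase ``corresponds uniquely'' should be replaced by a simple existence argument: for each arc $j$, lay down $b_{j,t}$ copies of $C(t)$ (in any order), with the last gray vertex of the arc being the white vertex. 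Since $\sum_t b_{j,t}p^t=k_j$ by construction, the arc lengths are reproduced and the resulting coloring is $\Gamma$.
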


  \begin{proof}
    First, we prove the if part. Let $\Gamma$ be the colored extended Dynkin diagram specified in the statement, and let $\Gamma'$ be a colored extended Dynkin diagram which is constructed by repainting the originally gray vertices of $\Gamma$ by white. Then by Lemma \ref{W_Gamma},
  		\[
  		  |W_{\Gamma'}|=\prod_{j=0}^l(p^j!)^{a_j},
  		\]
  		and so we get
  		\[
  		  \frac{|W|}{|W_{\Gamma'}|}=\prod_{j=1}^{l}\prod_{j'=1}^{a_j-1}\binom{n+1-j'p^j\sum_{k>j}a_kp^k}{p^j}.
  		\]
  	By Lucas's theorem, $\binom{n+1-j'p^j\sum_{k>j}a_kp^k}{p^j}$ is prime to $p$ for each $j,j'$, implying that $\frac{|W|}{|W_{\Gamma'}|}$ is prime to $p$. Since $|W_{\Gamma'}|$ divides $|W_\Gamma|$, $\frac{|W|}{|W_{\Gamma}|}$ is prime to $p$, so that we obtain $\Gamma \in \Psi_i(P_i(\Delta_p(0)))$.

    Next, we prove the only if part. The case $n=2$ can be easily deduced from Example \ref{SU(3)}. Suppose we have the one-to-one correspondence in the statement for $SU(k+1)$ with $k\le n-1$. Since $W(v)=W$ for each vertex $v$ of $\Delta$, vertices of $\Delta$ are vertices of $\Delta_p(0)$. Then by Lemma \ref{Psi_k}, there is a one-to-one correspondence between $\Psi_0(P_0(\Delta_p(0)))$ and vertices of $\Delta_p(0)$.

    Let $\Gamma\in\Psi_i(P_i(\Delta_p(0)))$ for $i>0$. Then $\Gamma$ includes the following subgraph $\Theta$ with $n'$ black vertices and two white vertices for $0\le n'\le n-2$.

    \begin{figure}[H]
      \scalebox{0.9}{
      \begin{tikzpicture}[x=0.7cm, y=0.7cm, thick]
        \draw(0,0.2) node[above]{$1$};
        \draw(1.5,0.2) node[above]{$2$};
        \draw(4,0.2) node[above]{$n'+1$};
        \draw(5.5,0.2) node[above]{$n'+2$};
        \draw(0,0)--(2,0);
        \draw(3.5,0)--(5.5,0);
        \draw[dashed](2,0)--(3.5,0);
        \fill[white](0,0) circle[radius=2.5pt];
        \draw(0,0) circle[radius=2.5pt];
        \fill(1.5,0) circle[radius=2.5pt];
        \fill(4,0) circle[radius=2.5pt];
        \fill[white](5.5,0) circle[radius=2.5pt];
        \draw(5.5,0) circle[radius=2.5pt];
      \end{tikzpicture}}
    \end{figure}

    \noindent Let $n'+1=\sum_{j=0}^la_j'p^j$ be the $p$-adic expansion. By Lemma \ref{W_Gamma},
    \[
      |W_\Gamma|=|W_\Theta||W_{\Gamma-\Theta}|
    \]
    such that $|W_\Theta|=(n'+1)!$ and $|W_{\Gamma-\Theta}|$ divides $(n-n')!$. Then since $|W|/|W_\Gamma|$ is prime to $p$,
    \[
      \frac{|W|}{(n'+1)!(n-n')!}=\frac{(n+1)!}{(n'+1)!(n-n')!}=\binom{n+1}{n'+1}
    \]
    is also prime to $p$. Thus by Lucas's theorem, we obtain
    \[
      a_j'\le a_j
    \]
    for each $j$. Let $\overline{\Theta}$ be a cycle graph with $n-n'+1$ vertices which is obtained from $\Gamma$ by contracting $\Theta$ to a single white vertex. Then by the induction hypothesis, $\overline{\Theta}$ belongs to $\Psi_{i-1}(\Delta_p(0))$ for $G=SU(n-n')$, and so $\overline{\Theta}$ is obtained by gluing $C(j)$. By definition, the graph $\Theta$ is also obtained by gluing $C(j)$. Therefore $\Gamma$ itself is obtained by gluing $C(j)$, completing the proof.
  \end{proof}

  Now we compute torsion in the homology of $\Hom(\Z^m,SU(n+1))_1$.

  \begin{theorem}
    \label{torsion SU}
    The homology of $\Hom(\Z^m,SU(n+1))_1$ for $m\ge 2$ has $p$-torsion in homology if and only if $p\le n+1$.
  \end{theorem}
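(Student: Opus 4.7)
The plan is to prove both directions separately. The \emph{only if} direction is immediate from Lemma~\ref{possible torsion}: since $|W|=(n+1)!$ for $SU(n+1)$, any prime dividing $|W|$ satisfies $p\le n+1$.

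For the \emph{if} direction, assume $p\le n+1$. By Corollary~\ref{Delta torsion} it suffices to prove $\widetilde H_*(\Delta_p(0);\Z/p)\ne 0$, which I would do by computing the Euler characteristic $\chi(\Delta_p(0))$ and showing $\chi\not\equiv 1\pmod p$. By Proposition~\ref{cycle graph}, an $(i-1)$-face of $\Delta_p(0)$ corresponds to a coloring of the cyclic extended Dynkin diagram $C_{n+1}$ with $i$ white vertices whose consecutive arc-lengths $(\ell_1,\ldots,\ell_i)$ form a composition of $n+1$ without base-$p$ carries---equivalently, writing $n+1=\sum_j a_j p^j$, for every $j$ the $j$-th digits of $\ell_1,\ldots,\ell_i$ sum to $a_j$.

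Let $N_i$ count such colorings and $C(i)$ count ordered no-carry compositions of $n+1$ into $i$ positive parts. Pairing each coloring with a distinguished white vertex and reading arc-lengths clockwise would yield a bijection between (coloring, marked white) and (ordered composition, starting vertex in $C_{n+1}$), giving
\[
    N_i\cdot i=C(i)\cdot(n+1),\qquad \chi(\Delta_p(0))=(n+1)\sum_{i\ge 1}\frac{(-1)^{i-1}C(i)}{i}.
\]
I would then introduce variables $y_j$ and the multivariate generating function $g(y)=\prod_j(1-y_j^p)/(1-y_j)-1$, for which $C(i)=[\prod_j y_j^{a_j}]\,g(y)^i$. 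The Mercator identity $\sum_{i\ge 1}(-1)^{i-1}t^i/i=\log(1+t)$ converts the sum above into
\[
    \sum_{i\ge 1}\frac{(-1)^{i-1}C(i)}{i}=\Bigl[\prod_j y_j^{a_j}\Bigr]\sum_j\log\frac{1-y_j^p}{1-y_j},
\]
a sum of terms each involving only one variable $y_j$. Its coefficient of $\prod_j y_j^{a_j}$ vanishes unless $n+1=a_k p^k$ has a single nonzero base-$p$ digit, in which case (since $0<a_k<p$) it equals $1/a_k$, giving $\chi(\Delta_p(0))=p^k$; otherwise $\chi(\Delta_p(0))=0$. The hypothesis $p\le n+1$ forces $k\ge 1$ in the single-digit case, so $\chi(\Delta_p(0))\equiv 0\pmod p$ in both situations, and Corollary~\ref{Delta torsion} produces the required $p$-torsion.

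The main obstacle I foresee is rigorously verifying the counting identity $N_i\cdot i=C(i)(n+1)$ in the presence of colorings with nontrivial rotational symmetry, where I must check that distinct markings of a single coloring still correspond to distinct (composition, starting vertex) pairs; once this bijection is secured, the remaining generating-function computation is standard.
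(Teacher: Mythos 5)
Your proposal is correct, and it takes a genuinely different route from the paper in the interesting direction. Both arguments reduce, via Corollary~\ref{Delta torsion}, to showing that the mod $p$ reduced homology of $\Delta_p(0)$ is nonzero, and both do this through the Euler characteristic (noting that for a nonempty complex $K$, $\chi(K)\ne 1$ forces $\widetilde H_*(K;\Z/p)\ne 0$). The paper's proof uses the action of the cyclic group $\Z/(n+1)$ on $P_i(\Delta_p(0))$ by rotations and an orbit-stabilizer count: any element of a stabilizer must permute the $a_l$ copies of $C(l)$ freely, so every orbit size is divisible by $(n+1)/(n+1,a_l)>1$, hence so is every $|P_i(\Delta_p(0))|$ and therefore so is $\chi(\Delta_p(0))$, which thus cannot equal $1$. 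Your argument uses the same cyclic symmetry but differently: the marking bijection gives the transparent identity $iN_i=(n+1)C(i)$, and the Mercator/log trick collapses the multinomial generating function into single-variable contributions, yielding the \emph{exact} value $\chi(\Delta_p(0))=p^k$ when $n+1=a_kp^k$ has a single nonzero base-$p$ digit, and $\chi(\Delta_p(0))=0$ otherwise. The worry you flag about rotational symmetry is unfounded: the map $(\text{coloring},\text{marked white})\mapsto(\text{ordered arc composition read clockwise from the mark},\text{the marked vertex})$ is a bijection regardless of symmetry, precisely because the marked vertex is recorded on both sides. One small point of precision: the correct criterion is $\chi\ne 1$ (not $\chi\not\equiv 1\pmod p$), but your conclusion $\chi\in\{0\}\cup\{p^k:k\ge 1\}$ under the hypothesis $p\le n+1$ gives this immediately. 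Also, the ``no base-$p$ carries'' description of faces of $\Delta_p(0)$ is most directly Lemma~\ref{W_Gamma} plus Kummer's theorem applied to $\binom{n+1}{\ell_1,\ldots,\ell_i}$; it is equivalent to Proposition~\ref{cycle graph} (since $a_j<p$ forces the multiplicity of each $C(j)$ in any arc to be $<p$), but invoking Kummer directly would shorten the exposition. Your method computes $\chi(\Delta_p(0))$ exactly, whereas the paper only establishes a divisibility; in exchange the paper's orbit-counting argument is shorter.
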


  \begin{proof}
    As mentioned in Section \ref{Introduction}, $\Hom(\Z^m,SU(n+1))_1$ has no $p$-torsion in homology for $p>n+1$. So we assume $p\le n+1$ and prove that $\Hom(\Z^m,SU(n+1))_1$ has $p$-torsion in homology. By Corollary \ref{Delta torsion}, it suffices to show that the mod $p$ homology of $\Delta_p(0)$ for $SU(n+1)$ is non-trivial. To this end, we aim to prove $\chi(\Delta_p(0))\ne 1$, where $\chi(K)$ denotes the Euler characteristic of a simplicial complex $K$. By Lemma \ref{Psi_k} and Proposition \ref{cycle graph}, rotations of a cycle graph induce the action of a cyclic group $\Z/(n+1)$ on $\Psi_i(P_i(\Delta_p(0)))$. Let $n+1=\sum_{j=0}^la_jp^j$ be the $p$-adic expansion, where $a_l\ne 0$. Then every element of the stabilizer of a face $\sigma\in P_i(\Delta_p(0))$ permutes $C(l)$-parts of $\Psi_i(\sigma)$ because $C(l)$ is not obtained by gluing $a_i$ copies of $C(i)$ for $i<l$. Thus the order of the stabilizer of $\Psi_i(\sigma)$ is at most $(n+1,a_l)$, implying $|P_i(\Delta_p(0))|$ is divisible by
    \[
      1<\frac{n+1}{(n+1,a_l)}<n+1
    \]
    for each $i$. Therefore since $a_l<n+1$ and $\chi(\Delta_p(0))=\sum_{i\ge 0}(-1)^i|P_i(\Delta_p(0))|$, we obtain that $\chi(\Delta_p(0))$ is divisible by $\frac{n+1}{(n+1,a_l)}$, implying $\chi(\Delta_p(0))\ne 1$.
  \end{proof}


  \subsection{Type $D$}

  Throughout this subsection, let $G=Spin(2n)$, and let $r$ be the integer such that $|W|=p^rq$ for $(p,q)=1$. We aim to prove the non-triviality of the homology of $\Delta_p(r-1)$. Fix a vertex $v$ of the extended Dynkin diagram of $SU(n+1)$. Let $f_i(n,p)$ denote the number of colored extended Dynkin diagrams $\Gamma$ of $SU(n+1)$ such that $\Gamma\in\mathbb{D}_i$, $v$ is white-colored and $|W_\Gamma|$ is prime to $p$. Let
  \[
    \chi(n,p)=\sum_{i=0}^n(-1)^if_i(n,p).
  \]

  \begin{example}
    All colored extended Dynkin diagrams $\Gamma$ of $SU(4)$ satisfying that a fixed vertex $v$ is white-colored and $|W_\Gamma|$ is prime to $p=3$ are as below. Then $f_0=0,\,f_1=1,\,f_2=3,\,f_3=1$, implying $\chi(3,3)=1$.

    \begin{figure}[H]
      \scalebox{0.9}{
      \begin{tikzpicture}[x=0.7cm, y=0.7cm, thick]
        \draw(0,0) circle[radius=1];
        \draw(3,0) circle[radius=1];
        \draw(6,0) circle[radius=1];
        \draw(9,0) circle[radius=1];
        \draw(12,0) circle[radius=1];
        \draw(0,1.1) node[above]{$v$};
        \draw(3,1.1) node[above]{$v$};
        \draw(6,1.1) node[above]{$v$};
        \draw(9,1.1) node[above]{$v$};
        \draw(12,1.1) node[above]{$v$};
        \fill[white](0,1) circle[radius=2.5pt];
        \draw(0,1) circle[radius=2.5pt];
        \fill[white](0,-1) circle[radius=2.5pt];
        \draw(0,-1) circle[radius=2.5pt];
        \fill[white](1,0) circle[radius=2.5pt];
        \draw(1,0) circle[radius=2.5pt];
        \fill[white](-1,0) circle[radius=2.5pt];
        \draw(-1,0) circle[radius=2.5pt];
        \fill[white](3,1) circle[radius=2.5pt];
        \draw(3,1) circle[radius=2.5pt];
        \fill[white](3,-1) circle[radius=2.5pt];
        \draw(3,-1) circle[radius=2.5pt];
        \fill[white](4,0) circle[radius=2.5pt];
        \draw(4,0) circle[radius=2.5pt];
        \fill(2,0) circle[radius=2.5pt];
        \fill[white](6,1) circle[radius=2.5pt];
        \draw(6,1) circle[radius=2.5pt];
        \fill(6,-1) circle[radius=2.5pt];
        \fill[white](7,0) circle[radius=2.5pt];
        \draw(7,0) circle[radius=2.5pt];
        \fill[white](5,0) circle[radius=2.5pt];
        \draw(5,0) circle[radius=2.5pt];
        \fill[white](9,1) circle[radius=2.5pt];
        \draw(9,1) circle[radius=2.5pt];
        \fill[white](9,-1) circle[radius=2.5pt];
        \draw(9,-1) circle[radius=2.5pt];
        \fill(10,0) circle[radius=2.5pt];
        \fill[white](8,0) circle[radius=2.5pt];
        \draw(8,0) circle[radius=2.5pt];
        \fill[white](12,1) circle[radius=2.5pt];
        \draw(12,1) circle[radius=2.5pt];
        \fill[white](12,-1) circle[radius=2.5pt];
        \draw(12,-1) circle[radius=2.5pt];
        \fill(13,0) circle[radius=2.5pt];
        \fill(11,0) circle[radius=2.5pt];
      \end{tikzpicture}}
    \end{figure}
  \end{example}

  We compute $\chi(n,p)$ for general $n$ and $p$.

  \begin{lemma}
    \label{chi(n,p)}
    We have
    \[
      \chi(n,p)=
      \begin{cases}
        -1&n\equiv-1\mod p\\
        1&n\equiv 0\mod p\\
        0&\text{otherwise.}
      \end{cases}
    \]
  \end{lemma}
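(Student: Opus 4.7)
The plan is to reduce the computation of $\chi(n,p)$ to a single generating-function identity. First I would translate the combinatorial data defining $f_i(n,p)$ into a composition count. The extended Dynkin diagram of $SU(n+1)$ is the cycle $C_{n+1}$, so fixing the vertex $v$ to be white and choosing $i$ additional white vertices among the remaining $n$ splits the cycle into $i+1$ arcs of consecutive black vertices whose lengths, read off in cyclic order starting from $v$, form an ordered tuple $(c_0, c_1, \ldots, c_i)$ with $c_0 + \cdots + c_i = n - i$. By Lemma \ref{W_Gamma}, $W_\Gamma$ is the product of the Weyl groups attached to the connected black components, and an arc of length $k$ contributes $\Sigma_{k+1}$ of order $(k+1)!$. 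Since $(k+1)!$ is coprime to $p$ exactly when $k \le p-2$, we obtain
$$f_i(n,p) = \#\{(c_0,\ldots,c_i) \in \Z_{\ge 0}^{i+1} \mid c_0 + \cdots + c_i = n - i,\ c_j \le p-2 \text{ for all } j\}.$$

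Second, I would package these counts as a single power series. Writing $g(x) = 1 + x + \cdots + x^{p-2} = (1 - x^{p-1})/(1 - x)$, the count above is precisely $[x^{n-i}]\, g(x)^{i+1}$, so
$$\chi(n,p) = \sum_{i \ge 0}(-1)^i [x^{n-i}]\, g(x)^{i+1} = [x^n]\, g(x) \sum_{i \ge 0} \bigl(-x\, g(x)\bigr)^i = [x^n]\, \frac{g(x)}{1 + x\, g(x)},$$
where the geometric sum is valid in $\Z[[x]]$ because $x\, g(x)$ has vanishing constant term. Since $1 + x\, g(x) = (1 - x^p)/(1-x)$, the fraction collapses to
$$\chi(n,p) = [x^n]\, \frac{1 - x^{p-1}}{1 - x^p}.$$

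Third, I would expand $(1 - x^{p-1})/(1 - x^p) = \sum_{k \ge 0}\bigl(x^{kp} - x^{(k+1)p - 1}\bigr)$ and read off the coefficient of $x^n$, which is $+1$ when $n \equiv 0 \mod p$, equals $-1$ when $n \equiv -1 \mod p$, and vanishes otherwise, matching the stated formula. The only real care point is the first step: one must check that the cyclic structure is correctly unrolled into a linear composition (the vertex $v$ is distinguished, so the arcs may be listed as an ordered tuple rather than taken modulo rotation), and that the coprimality condition translates precisely to each arc length being at most $p-2$. After that, the computation is routine formal power series manipulation.
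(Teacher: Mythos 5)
Your proof is correct, and it takes a genuinely different route from the paper. Both arguments begin from the same combinatorial translation: with $v$ forced white, a colored cycle $C_{n+1}$ contributing to $f_i(n,p)$ is the same as an ordered tuple $(c_0,\dots,c_i)$ of black-arc lengths with $c_0+\cdots+c_i=n-i$, and since a black arc of length $k$ gives $W_{A_k}\cong\Sigma_{k+1}$, the coprimality condition is exactly $c_j\le p-2$. From there the paper argues by induction on $n$: it checks the base cases $n\le p-1$ directly and, for $n\ge p$, peels off the (necessarily short) black arc adjacent to $v$ to contract the cycle to one of smaller length, yielding the recurrence $\chi(n,p)=-\sum_{j=1}^{p-1}\chi(n-j,p)$ (the exponent range printed in the paper appears to have an off-by-one typo; the correct upper limit is $p-1$). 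You instead encode the bounded-composition count with $g(x)=(1-x^{p-1})/(1-x)$, sum the geometric series in $\Z[[x]]$, and collapse to $\chi(n,p)=[x^n]\,(1-x^{p-1})/(1-x^p)$, from which the answer drops out by inspection. The generating-function route handles all $n$ at once with no separate base case, and the algebraic collapse to a clean closed form is a pleasant bonus; the paper's inductive route is slightly more elementary (no formal power series) but is morally the same recurrence that your geometric-series step automates. Your cautionary note about unrolling the cycle into a linear composition is exactly the right thing to check, and it is satisfied precisely because $v$ is a distinguished white vertex.
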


  \begin{proof}
    It is easy to see the statement holds for $n\le p-1$. Let $\Gamma$ be a colored extended Dynkin diagram of $SU(n+1)$ such that $v$ is white-colored and $|W_\Gamma|$ is prime to $p$. Then for $n\ge p$, it follows from Lemma \ref{W_Gamma} that $\Gamma$ is given as below, where $0\le k\le p-2$.

    \begin{figure}[H]
      \scalebox{0.9}{
      \begin{tikzpicture}[x=0.7cm, y=0.7cm, thick]
        \draw([shift={(0,0)}]80:3) arc [radius=3, start angle=80, end angle=160];
        \draw[dashed]([shift={(0,0)}]160:3) arc [radius=3, start angle=160, end angle=200];
        \draw([shift={(0,0)}]200:3) arc [radius=3, start angle=200, end angle=280];
        \draw[dashed]([shift={(0,0)}]280:3) arc [radius=3, start angle=280, end angle=440];
        \fill[white](0,3) circle[radius=2.5pt];
        \draw(0,3) circle[radius=2.5pt];
        \fill[white](0,-3) circle[radius=2.5pt];
        \draw(0,-3) circle[radius=2.5pt];
        \fill(-1.4,2.65) circle[radius=2.5pt];
        \fill(-2.5,1.65) circle[radius=2.5pt];
        \fill(-1.4,-2.65) circle[radius=2.5pt];
        \fill(-2.5,-1.65) circle[radius=2.5pt];
        \draw(0,3.2) node[above]{$v$};
        \draw(0,-3.2) node[below]{$k+1$};
        \draw(-1.6,2.75) node[above]{$1$};
        \draw(-2.85,1.7) node[above]{$2$};
        \draw(-3.2,-1.8) node[below]{$k-1$};
        \draw(-1.6,-2.75) node[below]{$k$};
      \end{tikzpicture}}
    \end{figure}
    \noindent Thus we get
    \[
      \chi(n,p)=\sum_{i=1}^{p-2}-\chi(n-i,p)
    \]
    and so the proof is done by induction on $n$.
  \end{proof}

  We name the vertices of the extended Dynkin diagram of $Spin(2n)$ as follows.

  \begin{figure}[H]
    \scalebox{0.9}{
    \begin{tikzpicture}[x=0.7cm, y=0.7cm, thick]
      \draw(-1,-1)--(0,0)--(-1,1);
      \draw(0,0)--(2,0);
      \draw(4,0)--(6,0);
      \draw[dashed](2,0)--(4,0);
      \draw(7,-1)--(6,0)--(7,1);
      \fill(0,0) circle[radius=2.5pt];
      \fill(-1,-1) circle[radius=2.5pt];
      \fill(-1,1) circle[radius=2.5pt];
      \fill(1.4,0) circle[radius=2.5pt];
      \fill(4.6,0) circle[radius=2.5pt];
      \fill(6,0) circle[radius=2.5pt];
      \fill(7,-1) circle[radius=2.5pt];
      \fill(7,1) circle[radius=2.5pt];
      \draw(-1.2,1) node[left]{$v_1$};
      \draw(-1.2,-1) node[left]{$v_2$};
      \draw(-0.2,0) node[left]{$v_3$};
      \draw(1.4,0.2) node[above]{$v_4$};
      \draw(4.6,0.2) node[above]{$v_{n-2}$};
      \draw(6.2,0) node[right]{$v_{n-1}$};
      \draw(7.2,1) node[right]{$v_n$};
      \draw(7.2,-1) node[right]{$v_{n+1}$};
    \end{tikzpicture}}
  \end{figure}

  \noindent Let $\Gamma_1$ and $\Gamma_2$ be colored extended Dynkin diagrams of $Spin(2n)$. Suppose all but one vertices of $\Gamma_1$ and $\Gamma_2$ have the same colors. If $\Gamma_1$ and $\Gamma_2$ correspond to faces of $\Delta_p(k)$, then we can cancel out these faces in the computation of $\chi(\Delta_p(k))$. Thus we specify the case $\Gamma_1$ is a face of $\Delta_p(k)$ but $\Gamma_2$ is not.

  \begin{lemma}
    \label{one vertex difference}
    Let $\Gamma_1$ and $\Gamma_2$ be colored extended Dynkin diagrams of $Spin(2n)$ such that all vertices but the vertex $v_1$ have the same color. If $\Gamma_1$ corresponds to a face of $\Delta_p(r-1)$ and $\Gamma_2$ is not, then $\Gamma_2$ is of the following form.

    \begin{figure}[H]
      \scalebox{0.9}{
      \begin{tikzpicture}[x=0.7cm, y=0.7cm, thick]
        \draw(-1,-1)--(0,0)--(-1,1);
        \draw(0,0)--(2,0);
        \draw(4,0)--(6.6,0);
        \draw[dashed](2,0)--(4,0);
        \draw[dashed](6.6,0)--(8.6,0);
        \fill(0,0) circle[radius=2.5pt];
        \fill[white](-1,1) circle[radius=2.5pt];
        \draw(-1,1) circle[radius=2.5pt];
        \fill[white](-1,-1) circle[radius=2.5pt];
        \draw(-1,-1) circle[radius=2.5pt];
        \fill(1.4,0) circle[radius=2.5pt];
        \fill(4.6,0) circle[radius=2.5pt];
        \fill[white](6.2,0) circle[radius=2.5pt];
        \draw(6.2,0) circle[radius=2.5pt];
        \draw(-1.2,-1) node[left]{$v_2$};
        \draw(-1.2,1) node[left]{$v_1$};
        \draw(-0.2,0) node[left]{$v_3$};
        \draw(1.4,0.2) node[above]{$v_4$};
        \draw(4.6,0.2) node[above]{$v_{p}$};
        \draw(6.2,0.2) node[above]{$v_{p+1}$};
      \end{tikzpicture}}
    \end{figure}
  \end{lemma}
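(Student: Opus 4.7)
The plan is to localize the effect of toggling $v_1$'s color to a single irreducible component of the black subgraph of $\Gamma_2$, and then pin down that component using $p$-adic Weyl-group computations.

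First I would determine the color of $v_1$ in each diagram. Making a vertex black only enlarges the set of generating reflections, so if $v_1$ were black in $\Gamma_2$ and white in $\Gamma_1$ we would have $W_{\Gamma_1}\subseteq W_{\Gamma_2}$, contradicting $p\mid|W_{\Gamma_1}|$ and $p\nmid|W_{\Gamma_2}|$. Hence $v_1$ is black in $\Gamma_1$ and white in $\Gamma_2$.

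Next, since $v_1$ is adjacent only to $v_3$ in the extended Dynkin diagram of $D_n$, toggling $v_1$ affects exactly the irreducible component of the black subgraph that contains $v_3$. Let $C$ denote that component in $\Gamma_2$ (so $C=\emptyset$ when $v_3$ is white), and let $C'=C\cup\{v_1\}$ be the corresponding component in $\Gamma_1$. All other components of the black subgraphs agree and contribute a common factor $M$ to both $|W_{\Gamma_1}|$ and $|W_{\Gamma_2}|$, so Lemma \ref{W_Gamma}(1) combined with the hypothesis yields $p\nmid|W_C|$, $p\nmid M$, and $p\mid|W_{C'}|$.

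Third I would case-split on the colors of $v_2$ and $v_3$. In the main case $v_2$ white and $v_3$ black, $C$ is a chain $v_3,v_4,\dots,v_k$ terminating at the first white vertex, so $C$ is of type $A_{k-2}$ with $|W_C|=(k-1)!$ while $C'$ is a chain of type $A_{k-1}$ with $|W_{C'}|=k!$; the conditions $p\nmid(k-1)!$ and $p\mid k!$ force $k=p$, so the chain must end at $v_p$ with $v_{p+1}$ white, which is precisely the pictured configuration. If instead $v_2$ is black, then $v_2\in C$ and $C'$ is either a longer $A$-chain (when $v_4$ is white) or has a fork at $v_3$ of type $D_k$ (when $v_4$ is black); using $|W(A_k)|=(k+1)!$ and $|W(D_k)|=2^{k-1}k!$, one checks $|W_{C'}|/|W_C|$ is a pure power of $2$ in both sub-cases, and since $|W_C|$ is already even once $|C|\ge 2$, no prime $p$ can satisfy $p\nmid|W_C|$ and $p\mid|W_{C'}|$ simultaneously. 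The degenerate case $v_3$ white (so $C'=\{v_1\}$, $|W_{C'}|=2$) forces $p=2$ and the remaining components to be trivial, fitting the picture with an empty middle chain.

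The main obstacle I anticipate is the bookkeeping in the fork case, particularly when the chain has the option to propagate into the right-hand fork of the extended Dynkin diagram; Kummer's theorem on the $p$-adic valuation of factorials is the tool for comparing $p$-divisibility across the different Dynkin types that arise, and the identity $|W(D_k)|/|W(A_{k-1})|=2^{k-1}$ is what uniformly kills the $v_2$-black branches of the case analysis.
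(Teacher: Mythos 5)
Your approach is sound in outline: determine $v_1$'s color by monotonicity, localize to the black component $C$ containing $v_3$, and compare $|W_C|$ with $|W_{C'}|$ via Lemma \ref{W_Gamma}(1). The determination that $v_1$ is black in $\Gamma_1$ and white in $\Gamma_2$ is correct, and the dismissal of the $v_2$-black branches via $|W(D_k)|/|W(A_{k-1})|=2^{k-1}$ together with the evenness of $|W_C|$ is also correct (the would-be sub-case where the chain reaches the right fork with both fork vertices black is ruled out because $C'$ would then be the whole extended diagram, which exceeds $n$ black vertices and is not a face of $\Delta$).

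The gap is exactly the one you flag and defer. In the main case ($v_2$ white, $v_3$ black) you assert that $C$ is a chain $v_3,\dots,v_k$ of type $A_{k-2}$ terminating at the first white vertex, but that is only true when the black run stops before the right-hand fork. If $v_3,\dots,v_{n-1}$ are all black and both $v_n,v_{n+1}$ are black, then $C$ is of type $D_{n-1}$ and $C'=C\cup\{v_1\}$ is of type $D_n$; here $|W_{C'}|/|W_C|=2n$, which is not a power of $2$, so it is not dismissed by the argument that kills the $v_2$-black branches. Imposing $p\nmid 2^{n-2}(n-1)!$ and $p\mid 2^{n-1}n!$ for odd $p$ forces $n=p$, and the resulting $\Gamma_2$ has $v_3,\dots,v_{p+1}$ all black — so $v_{p+1}$ is black, not white, contradicting the pictured form. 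Likewise the sub-case $v_n$ white, $v_{n+1}$ black gives a chain ending at $v_{p+1}$ rather than $v_p$ when $n=p$. So your ``terminating at the first white vertex'' step silently excludes configurations that actually satisfy the hypotheses. Note that the figure in the lemma is drawn for the generic situation where the chain does not reach the right fork; the paper's own use of the lemma in Theorem \ref{torsion Spin} for $n=p$ applies the constraint only to $v_1,\dots,v_{p-1}$ and then treats $v_p,v_{p+1}$ by hand (allowing both to be black), which is consistent with the exceptional sub-cases above. Your proof should therefore carry out this fork analysis explicitly and record that the stated picture is attained only when the chain stops strictly before the right fork (equivalently, when $n\geq p+2$), with the $n=p$ and $n=p+1$ configurations requiring the small modifications just described.
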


  \begin{proof}
    The statement follows from Lemma \ref{W_Gamma}.
  \end{proof}

  \begin{theorem}
    \label{torsion Spin}
    If $p\le n$ and $n\equiv 0,1\mod p$, then for $m\ge 2$, $\Hom(\Z^m,Spin(2n))_1$ has $p$-torsion in homology.
  \end{theorem}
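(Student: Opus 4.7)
The plan is to apply Corollary~\ref{Delta torsion}: it suffices to exhibit some $k$ for which $\widetilde{H}_*(\Delta_p(k);\Z/p)\ne 0$, and since the Euler characteristic of a finite simplicial complex is independent of the field of coefficients, it is enough to show $\chi(\Delta_p(k))\ne 1$. I would take $k=r-1$, the largest proper level of the filtration, whose combinatorics has already been pinned down in Lemma~\ref{one vertex difference}.

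First I would compute $\chi(\Delta_p(r-1))$ by a pairing argument modeled on the proof of Lemma~\ref{chi(n,p)}. Partition colored extended Dynkin diagrams of $Spin(2n)$ into pairs $\{\Gamma_1,\Gamma_2\}$ whose colorings differ only at the fixed vertex $v_1$. Via the bijection $\Psi_i$ of Lemma~\ref{Psi_k}, the two members of such a pair correspond to faces of $\Delta$ of consecutive dimension, so they contribute with opposite signs to $\chi(\Delta_p(r-1))$ and cancel whenever both of them lie in $\Delta_p(r-1)$. Lemma~\ref{one vertex difference} identifies exactly the pairs that fail to cancel: namely those in which $\Gamma_2$ has the explicit form displayed there, with $v_1,v_2,v_{p+1}$ white, $v_3,\dots,v_p$ black, and the remaining vertices $v_{p+2},\dots,v_{n+1}$ arbitrarily colored. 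In each such pair exactly one member lies in $\Delta_p(r-1)$, and its signed contribution to $\chi(\Delta_p(r-1))$ survives.

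After separating off the fixed contribution of the frozen left piece on $\{v_1,\dots,v_{p+1}\}$, what remains is a signed count of colored diagrams on the induced subgraph of the extended Dynkin diagram of $D_n$ supported on $\{v_{p+1},\dots,v_{n+1}\}$ (with $v_{p+1}$ declared white), subject to a coprimality-to-$p$ condition on the Weyl group of the resulting black subsystem. I would recast this count, in the spirit of Lemma~\ref{chi(n,p)}, as an Euler characteristic of the same shape as $\chi(m,p)$ for an appropriate $m$ determined by $n$ and $p$, and invoke Lemma~\ref{chi(n,p)} to show it is nonzero precisely under the hypothesis $n\equiv 0,1\pmod p$. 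The resulting value of $\chi(\Delta_p(r-1))$ will then be manifestly different from $1$, so $\widetilde H_*(\Delta_p(r-1);\Z/p)\ne 0$ and Corollary~\ref{Delta torsion} concludes.

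The main obstacle will be this last identification. One needs a careful $p$-adic bookkeeping of $|W(D_k)|=2^{k-1}k!$ and $|W(A_k)|=(k+1)!$, together with Lucas's theorem, to verify that the condition ``$p^r$ does not divide $|W|/|W_\Gamma|$'' on the full $D_n$-diagram transforms, after factoring out the frozen left chain of Lemma~\ref{one vertex difference}, into precisely a ``$|W_{\Gamma'}|$ coprime to $p$'' condition on the reduced right-hand diagram, with a clean signed count that Lemma~\ref{chi(n,p)} can evaluate. The two congruence classes $n\equiv 0\pmod p$ and $n\equiv 1\pmod p$ should correspond to the two nonzero cases in Lemma~\ref{chi(n,p)}, but matching the residues correctly requires tracking the shift introduced by the frozen left piece of length $p$, and this is where the technical heart of the proof lies.
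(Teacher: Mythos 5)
Your overall strategy matches the paper's: invoke Corollary~\ref{Delta torsion}, reduce to showing $\chi(\Delta_p(r-1))\ne 1$ by arguing $\widetilde\chi(n,p)\ne 0$, and use the cancellation/pairing argument built around Lemma~\ref{one vertex difference}. However there is a real gap in the reduction step. After you apply the pairing only at the left fork $v_1$ and strip off the frozen chain on $\{v_1,\dots,v_{p+1}\}$, the remaining induced subgraph on $\{v_{p+1},\dots,v_{n+1}\}$ (with $v_{p+1}$ white) is a type-$D$ ``broom'': a path ending in the right fork at $v_{n-1},v_n,v_{n+1}$. This is \emph{not} a cycle. Lemma~\ref{chi(n,p)} is proved specifically for the cyclic extended Dynkin diagram of $SU(n+1)$ with one vertex marked white, and the recursion $\chi(n,p)=-\sum_{i=1}^{p-2}\chi(n-i,p)$ hinges on that cyclic topology. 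There is no way to ``recast'' your residual $D$-shape count as $\chi(m,p)$ without either proving a separate $D$-type analogue or changing the setup. The paper's key move, which you miss, is to run the Lemma~\ref{one vertex difference} pairing at \emph{both} forks (the lemma holds verbatim with $v_1$ replaced by $v_n$). This freezes both ends on $\{v_1,\dots,v_{p+1}\}$ and $\{v_{n-p+1},\dots,v_{n+1}\}$; deleting both frozen blocks leaves a bare path on $\{v_{p+2},\dots,v_{n-p}\}$, which closes up to the cycle graph of $SU(n-2p)$ after gluing in a single white vertex $v$ connected to $v_{p+2}$ and $v_{n-p}$. Only then is Lemma~\ref{chi(n,p)} applicable, and one reads off $\widetilde\chi(n,p)=-\chi(n-2p-1,p)$, which is nonzero exactly when $n\equiv 0,1\bmod p$.

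A second gap: the double-pairing/cycle-closing reduction requires $n\ge 2p+2$ so that the two frozen blocks are disjoint. You do not address the boundary range $p\le n<2p+2$. In that range the paper does a direct case analysis, showing $\widetilde\chi(n,p)=0$ except for $n=p,p+1,2p,2p+1$, and exhibiting the few surviving diagrams explicitly in those four cases. Your argument, as written, has nothing to say for these small $n$, and they include half of the residue classes $n\equiv 0,1\bmod p$ in that window.
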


  \begin{proof}
    As in the proof of Theorem \ref{torsion SU}, it suffices to show $\chi(\Delta_p(r-1))\ne 1$ for $p\le n$ and $n\equiv 0,1\mod p$. Let
    \[
      \widetilde{\chi}(n,p)=\sum_{i=0}^n(-1)^i(|P_i(\Delta)|-|P_i(\Delta_p(r-1))|).
    \]
    Then since $\chi(\Delta)=1$, we have $\chi(\Delta_p(r-1))=1-\widetilde{\chi}(n,p)$, and so $\chi(\Delta_p(r-1))\ne 1$ if and only if $\widetilde{\chi}(n,p)\ne 0$.

    First we consider the $n\ge 2p+2$ case. Note that Lemma \ref{one vertex difference} holds if we replace $v_1$ with $v_n$. Then since $n\ge 2p+2$, we only need to count colored extended Dynkin diagrams of $Spin(2n)$ such that vertices $v_1,v_2,\ldots,v_{p+1},v_{n-p+1},v_{n-p+2},\ldots,v_{n+1}$ are colored as in Lemma \ref{one vertex difference}. If we delete $v_1,v_2,\ldots,v_{p+1},v_{n-p+1},v_{n-p+2},\ldots,v_{n+1}$ and add a white vertex $v$ together with edges $vv_{p+2}$ and $vv_{n-p}$, then we get a colored extended Dynkin diagram of $SU(n-2p)$. Through this operation, there is a one-to-one correspondence between colored extended Dynkin diagrams of $Spin(2n)$ whose left and right ends are as in Lemma \ref{one vertex difference} and colored extended Dynkin diagrams of $SU(n-2p)$ such that a fixed vertex $v$ is white-colored. Then we get
    \[
      \widetilde{\chi}(n,p)=-\chi(n-2p-1,p).
    \]
    Therefore, for $n\ge 2p+2$, the proof is complete by Lemma \ref{chi(n,p)}.

    Next we consider the $p\le n<2p+2$ case. We only need to count colored extended Dynkin diagrams such that vertices $v_1,v_2,\ldots,v_{p+1}$ are colored as in Lemma \ref{one vertex difference}. Except for $n=p,p+1,2p,2p+1$, vertices $v_{p+2},v_{p+3},\ldots,v_{n+1}$ can have arbitrarily color, and so $\widetilde{\chi}(n,p)=0$. For $n=p$, we only need to count the graphs whose vertices $v_1,\ldots,v_{p-1}$ are colored as in Lemma \ref{one vertex difference}. Then there are only three graphs to be counted as follows, where $v_p$ and $v_{p+1}$ are either black or white such that they cannot be white at the same time.

    \begin{figure}[H]
      \scalebox{0.9}{
      \begin{tikzpicture}[x=0.7cm, y=0.7cm, thick]
        \draw(-1,-1)--(0,0)--(-1,1);
        \draw(0,0)--(2,0);
        \draw(4,0)--(6.2,0);
        \draw(7.2,-1)--(6.2,0)--(7.2,1);
        \draw[dashed](2,0)--(4,0);
        \fill(0,0) circle[radius=2.5pt];
        \fill[white](-1,1) circle[radius=2.5pt];
        \draw(-1,1) circle[radius=2.5pt];
        \fill[white](-1,-1) circle[radius=2.5pt];
        \draw(-1,-1) circle[radius=2.5pt];
        \fill(1.4,0) circle[radius=2.5pt];
        \fill(4.6,0) circle[radius=2.5pt];
        \fill(6.2,0) circle[radius=2.5pt];
        \fill[lightgray](7.2,1) circle[radius=2.5pt];
        \draw(7.2,1) circle[radius=2.5pt];
        \fill[lightgray](7.2,-1) circle[radius=2.5pt];
        \draw(7.2,-1) circle[radius=2.5pt];
        \draw(-1.2,-1) node[left]{$v_2$};
        \draw(-1.2,1) node[left]{$v_1$};
        \draw(-0.2,0) node[left]{$v_3$};
        \draw(1.4,0.2) node[above]{$v_4$};
        \draw(4.6,0.2) node[above]{$v_{p-2}$};
        \draw(6.4,0) node[right]{$v_{p-1}$};
        \draw(7.2,-1) node[right]{$v_{p}$};
        \draw(7.2,1) node[right]{$v_{p+1}$};
      \end{tikzpicture}}
    \end{figure}

    \noindent In each case $n=p+1,2p,2p+1$, it follows from Lemma \ref{one vertex difference} that there is only one graph to be counted. For $n=p+1$, the end vertices $v_1,v_2,v_{p+1},v_{p+2}$ are white and the remaining vertices are black. For $n=2p$, the end vertices $v_1,v_2,v_{2p},v_{2p+1}$ and the middle vertex $v_{p+1}$ are white and the remaining vertices are black. For $n=2p+1$, the end vertices $v_1,v_2,v_{2p+1},v_{2p+2}$ and the middle vertices $v_{p+1},v_{p+2}$ are white and the remaining vertices are black. Summarizing, $\widetilde{\chi}(n,p)\ne 0$ for $n=p,p+1,2p,2p+1$. Therefore the proof is complete.
  \end{proof}


  \subsection{Exceptional groups}

  We continue to compute torsion in the homology of $\Hom(\Z^m,G)_1$ when $G$ is the exceptional Lie group. Let $G$ be exceptional, and let $p$ be a prime dividing $|W|$, where $|W|$ is given as in Table \ref{order}. Then $G$ has $p$-torsion in homology except for
  \begin{equation}
    \label{possible torsion exceptional}
    (G,p)=(G_2,3),\;(E_6,5),\;(E_7,5),\;(E_7,7),\;(E_8,7).
  \end{equation}
  See \cite[Theorem 5.11, Chapter 7]{MT}. Since $G$ is a retract of $\Hom(\Z^m,G)_1$, $\Hom(\Z^m,G)_1$ has $p$-torsion in homology except possibly for the cases \eqref{possible torsion exceptional}. On the other hand, $\Hom(\Z^m,G)_1$ has no $p$-torsion in homology when $p$ does not divide $|W|$. Then we only need to consider the cases \eqref{possible torsion exceptional}.

  Now we prove:

  \begin{theorem}
    \label{torsion exceptional}
    Let $G$ be exceptional. Then $\Hom(\Z^m,G)_1$ for $m\ge 2$ has $p$-torsion in homology if and only if $p$ divides $|W|$, except possibly for $(G,p)=(E_7,5),(E_7,7),(E_8,7)$.
  \end{theorem}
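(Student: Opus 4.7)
The paragraphs preceding the theorem reduce the claim to the two remaining pairs $(G,p)=(G_2,3)$ and $(G,p)=(E_6,5)$: for every other prime $p$ dividing $|W|$ and not listed in \eqref{possible torsion exceptional}, the Lie group $G$ itself has $p$-torsion in homology and is a retract of $\Hom(\Z^m,G)_1$, so the $p$-torsion is inherited; and the three cases $(E_7,5),(E_7,7),(E_8,7)$ are explicitly allowed to fall outside the conclusion.

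For the two outstanding cases, the plan is to apply Corollary \ref{Delta torsion} by exhibiting a subcomplex $\Delta_p(k)$ whose reduced mod $p$ homology is non-trivial. Taking $k=0$, it suffices to show $\chi(\Delta_p(0))\ne 1$, since this forces $\widetilde H_*(\Delta_p(0);\Q)\ne 0$ and hence $\widetilde H_*(\Delta_p(0);\Z/p)\ne 0$. By Lemmas \ref{Psi_k} and \ref{W_Gamma}, the $j$-faces of $\Delta_p(0)$ correspond to colorings of the extended Dynkin diagram of $G$ with $n-j$ black vertices whose induced black subgraph has some connected component whose finite Weyl group has order divisible by $p$.

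For $(G_2,3)$, the diagram $\tilde G_2$ has three vertices with a simple bond $v_0-v_1$ and a triple bond between $v_1$ and $v_2$; inspection of the three $0$-faces shows that only $\{v_0,v_1\}$, generating $W(A_2)$ of order $6$, and $\{v_1,v_2\}$, generating $W(G_2)$ of order $12$, qualify, while $\{v_0,v_2\}$ yields $(\Z/2)^2$ of order $4$; no edge or $2$-face of $\Delta$ qualifies either. Thus $\Delta_3(0)$ is two isolated points and $\chi(\Delta_3(0))=2\ne 1$. For $(E_6,5)$, the diagram $\tilde E_6$ is the symmetric tripod with three arms of length two meeting at a central vertex, and the induced-subgraph Dynkin types in $\tilde E_6$ whose Weyl group contains the prime $5$ are exactly $A_4,A_5,D_5,E_6$ (equivalently, those containing an $A_4$-subpath). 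Organising the enumeration by the number of black vertices and using the $S_3$-symmetry permuting the three arms, one counts six good subgraphs with four black vertices (the six $A_4$-paths), twelve with five (three $A_5$, three $D_5$, and six of type $A_4+A_1$), and six with six (three $E_6$, from removing an arm-end, and three $A_5+A_1$, from removing an arm-interior vertex; removing the central vertex leaves $3A_2$, which is not good). The alternating sum yields $\chi(\Delta_5(0))=6-12+6=0\ne 1$, and Corollary \ref{Delta torsion} concludes.

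The main obstacle is the $\tilde E_6$ enumeration. Unlike the recursive structure of type $A$ in Proposition \ref{cycle graph} or of type $D$ via Lemma \ref{chi(n,p)}, the tripod geometry of $\tilde E_6$ offers no uniform description, so the count must be verified case by case. The subtlety to monitor is the correct identification of each induced subgraph as its Dynkin type -- in particular, whenever the central vertex $c$ lies in the subgraph any $A_4$-subpath is forced to pass through $c$, so there is no disconnected $A_4+A_2$ configuration to worry about, and the $S_3$-symmetry must not be allowed to produce hidden double counting.
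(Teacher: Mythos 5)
Your proposal is correct and takes essentially the same route as the paper: reduce to the two outstanding pairs $(G_2,3)$ and $(E_6,5)$, then invoke Corollary \ref{Delta torsion} after verifying $\chi(\Delta_p(0))\neq 1$. The one place your argument diverges is the $E_6$ computation: rather than enumerating to get $\chi(\Delta_5(0))=6-12+6=0$ directly as you do (correctly), the paper observes that no coloring of the extended $E_6$ diagram fixed by the order-three rotation about the central vertex yields a face of $\Delta_5(0)$, so each $|P_i(\Delta_5(0))|$ is a multiple of $3$ and hence $\chi(\Delta_5(0))\equiv 0\pmod 3\neq 1$; both conclusions agree, your version giving the exact value at the cost of a longer case analysis.
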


  \begin{proof}
    As observed above, it follows from Corollary \ref{Delta torsion} that we only need to show that $\Delta_p(0)$ has non-trivial mod $p$ homology for $(G,p)$ in \eqref{possible torsion exceptional} for $(G,p)=(G_2,3),(E_6,5)$.

    Let $G=G_2$. Then the extended Dynkin diagram is given as below. So by Lemma \ref{W_Gamma}, $\Delta_3(0)$ consists only of two vertices 1 and 3, implying it has non-trivial mod $p$ homology.

    \begin{figure}[H]
      \scalebox{0.9}{
      \begin{tikzpicture}[x=0.7cm, y=0.7cm, thick]
        \draw(0,0)--(3,0);
        \draw(0,0.1)--(1.5,0.1);
        \draw(0,-0.1)--(1.5,-0.1);
        \draw(0.75,0)--(0.9,0.2);
        \draw(0.75,0)--(0.9,-0.2);
        \fill(0,0) circle[radius=2.5pt];
        \fill(1.5,0) circle[radius=2.5pt];
        \fill(3,0) circle[radius=2.5pt];
        \draw(0,0.2) node[above]{$1$};
        \draw(1.5,0.2) node[above]{$2$};
        \draw(3,0.2) node[above]{$3$};
      \end{tikzpicture}}
    \end{figure}

    Let $G=E_6$. Then the extended Dynkin diagram is given as below.

    \begin{figure}[H]
      \scalebox{0.9}{
      \begin{tikzpicture}[x=0.7cm, y=0.7cm, thick]
        \draw(0,0)--(6,0);
        \draw(3,0)--(3,3);
        \fill(0,0) circle[radius=2.5pt];
        \fill(1.5,0) circle[radius=2.5pt];
        \fill(3,0) circle[radius=2.5pt];
        \fill(4.5,0) circle[radius=2.5pt];
        \fill(6,0) circle[radius=2.5pt];
        \fill(3,1.5) circle[radius=2.5pt];
        \fill(3,3) circle[radius=2.5pt];
        \draw(3,-0.2) node[below]{$v$};
      \end{tikzpicture}}
    \end{figure}

    \noindent It has symmetry of rotation around the vertex $v$. By Lemma \ref{W_Gamma}, we can see that if a colored extended Dynkin diagram is symmetric with respect to the rotation around $v$, then its corresponding face of $\Delta$ does not lie in $\Delta_5(0)$. Then the number of $i$-faces of $\Delta_5(0)$ is divisible by 3 for each $i$, implying $\chi(\Delta_5(0))$ is divisible by 3. Thus $\Delta_5(0)$ has non-trivial mod 5 homology.
  \end{proof}

  By Theorems \ref{torsion SU}, \ref{torsion Spin} and \ref{torsion exceptional}, we dare to pose:

  \begin{conjecture}
    \label{conjecture}
    The homology of $\Hom(\Z^m,G)_1$ for $m\ge 2$ has $p$-torsion if and only if $p$ divides the order of $W$.
  \end{conjecture}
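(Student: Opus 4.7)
The only-if direction of the conjecture is Lemma \ref{possible torsion}, so I focus on the if direction. By Corollary \ref{Delta torsion}, it is enough to prove that whenever $p$ divides $|W|$, the complex $\Delta_p(k)$ has non-trivial mod $p$ homology for some $k$. The cases left open by Theorems \ref{torsion SU}, \ref{torsion Spin} and \ref{torsion exceptional} are types $B$ and $C$ (all primes $p \le n$ dividing $|B_n|$), type $D$ for residues $n \not\equiv 0, 1 \pmod p$, and the exceptional pairs $(E_7,5)$, $(E_7,7)$, $(E_8,7)$.

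My plan is to extend the combinatorial description of $\Delta_p(k)$ through the extended Dynkin diagram correspondence of Lemmas \ref{Psi_k} and \ref{W_Gamma} to each remaining Lie type, and then either compute $\chi(\Delta_p(k))$ or exhibit a non-trivial cycle. For types $B$ and $C$, the extended Dynkin diagrams are a fork attached to a long chain ending in a double bond, and a linear chain with double bonds at both ends, respectively. After deleting the white vertices, each connected component of the remaining black subgraph is of type $A$, $B$, $C$ or $D$, so its Weyl group order is a product of factorials and powers of $2$. Membership of $\sigma$ in $\Delta_p(k)$ then becomes a congruence on the $p$-adic valuation of a product of such factors, for which Lucas's theorem (as in Proposition \ref{cycle graph}) is the main tool. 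I would mimic the end-reduction argument of Theorem \ref{torsion Spin}: remove one or both distinguished end vertices and reduce to a type-$A$ counting problem controlled by $\chi(n,p)$ of Lemma \ref{chi(n,p)}. For $p$ odd this should yield $\chi(\Delta_p(r-1)) \ne 1$ by the same mechanism as in type $D$; for $p = 2$, which now must be handled since $|B_n| = 2^n n!$ always has $2$-torsion, one has the further input that $\Delta_p(r-1) \subsetneq \Delta$ (because $W(\Delta) = 1$ has $p$-adic valuation $r$ in $|W|$), so $\Delta_p(r-1) \subset \partial \Delta \cong S^{n-1}$ and non-triviality of its mod $2$ homology can be detected by showing it is not all of $\partial \Delta$.

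For type $D$ with $n \not\equiv 0,1 \pmod p$, the proof of Theorem \ref{torsion Spin} fails because the reduction yields $\chi(n-2p-1,p) = 0$. Here I would descend further in the filtration, examining $\Delta_p(k)$ for $k < r-1$ and tracking how the inclusion $\Delta_p(k-1) \subset \Delta_p(k)$ fails to be a mod $p$ homology equivalence via the long exact sequence of the pair. For the exceptional pairs, the extended Dynkin diagrams of $E_7$ and $E_8$ have only $8$ and $9$ vertices, so the set of colored extended Dynkin diagrams can be enumerated exhaustively (by hand or by a short computer check) together with the associated orders $|W_\Gamma|$; one then verifies $\chi(\Delta_p(k)) \ne 1$ for an appropriate $k$ directly. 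The extended Dynkin diagram of $E_7$ has a $\Z/2$ symmetry and that of $E_8$ is asymmetric, which should make the symmetry-and-orbit counting argument of the $E_6$ case in Theorem \ref{torsion exceptional} adaptable to $(E_7,5)$ and $(E_7,7)$.

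The main obstacle I foresee is that $\chi(\Delta_p(k))$ could coincidentally equal $1$ even when $\widetilde{H}_*(\Delta_p(k); \Z/p) \ne 0$, and this is most dangerous for $(E_8,7)$ where symmetry is absent. In such a stubborn case, the Euler characteristic test is useless and one must construct an explicit non-trivial mod $p$ cycle. My fallback would be to exploit lower-rank subgroup inclusions: for instance $E_7 \supset E_6 \times U(1)$ and $E_8 \supset E_7 \times U(1)$ after suitable localization, or $G \supset SU(p)$ for any $G$ of rank $\ge p-1$, and ask whether the torsion in $\Hom(\Z^m, SU(p))_1$ produced by Theorem \ref{torsion SU} survives to $\Hom(\Z^m, G)_1$ via the natural retraction-up-to-homotopy induced by such an inclusion. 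Establishing that survival rigorously (at the level of the spectral sequence of Proposition \ref{spectral sequence}, or of the decomposition of Theorem \ref{hocolim}) would be the delicate step, but if successful it would reduce every remaining case to the already-settled type-$A$ computation and close the conjecture uniformly.
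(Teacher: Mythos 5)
The statement you are addressing is a \emph{conjecture}, not a theorem: the paper explicitly poses it as open precisely because the authors could not close the remaining cases, and Section~\ref{Negative results} of the paper is devoted to proving that the tool you propose to use \emph{cannot} work there. Your central strategy---show $\Delta_p(k)$ has non-trivial mod~$p$ homology for the remaining $(G,p)$---is refuted case by case:
for $G=Spin(2n+1)$ or $Sp(n)$ and $p$ odd, Proposition~\ref{BD fail} shows that every $\Delta_p(k)$ is a cone (the join $v*\widehat{\Delta}_p(k)$ over the vertex corresponding to the short end node), hence contractible, so both $\chi(\Delta_p(k))=1$ and $\widetilde{H}_*(\Delta_p(k);\Z/p)=0$; your expectation that the $\chi(n,p)$ machinery of Lemma~\ref{chi(n,p)} will produce $\chi\ne 1$ there is therefore false. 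Likewise, for $G=Spin(10)$ and $p=3$ (the type~$D$ case with $n\not\equiv 0,1\bmod p$), Proposition~\ref{D fail} gives a discrete Morse matching showing $\Delta_3(0)$ is contractible, and for $(E_7,5)$, $(E_7,7)$, $(E_8,7)$, Propositions~\ref{E_7 fail} show all the relevant $\Delta_p(0)$ are contractible. Your suggestion to ``descend further in the filtration'' for type~$D$ cannot rescue the argument, since once the relevant $\Delta_p(k)$ are all contractible the long exact sequences of pairs carry no information; and the symmetry-and-orbit counting you propose for $E_7$ is moot once the complex is known to be contractible.

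Your fallback via subgroup inclusions $SU(p)\hookrightarrow G$ is a genuinely different idea not explored in the paper, but as stated it is not a proof: the inclusion of a subgroup does not in general split off a retract of $\Hom(\Z^m,G)_1$, and there is no argument given that torsion classes in $\Hom(\Z^m,SU(p))_1$ map non-trivially into $\Hom(\Z^m,G)_1$. You acknowledge this is the delicate step, and it is precisely the step that would need to be established; without it, the proposal does not prove the conjecture. For $p=2$ you observe correctly that the case is already covered, but by Corollary~\ref{main 2-torsion} rather than by $\Delta_p(k)$. In summary: the ``only if'' direction is indeed Lemma~\ref{possible torsion}, but your proposed route to the ``if'' direction is blocked by the paper's own negative results, and the proposal does not supply a working alternative.
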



  \section{Negative results}\label{Negative results}

  Although we have pose Conjecture \ref{conjecture}, the complex $\Delta_p(k)$ does not work in the remaining cases, unfortunately. To be fair, we prove this, but we have to notice that those negative results do not imply the non-existence of torsion in homology because the non-triviality of the homology of $\Delta_p(k)$ is only a sufficient condition for the existence of $p$-torsion in the homology of $\Hom(\Z^m,G)_1$ for $m\ge 2$.

  First, we consider the type $D$ by examining $(G,p)=(Spin(10),3)$ which is not included in Theorem \ref{torsion Spin}. Since $|W|=2^4\cdot 5!$ for $G=Spin(10)$, we only need to consider $\Delta_3(0)$.

  \begin{proposition}
    \label{D fail}
    The complex $\Delta_3(0)$ of $Spin(10)$ is contractible.
  \end{proposition}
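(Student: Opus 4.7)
The plan is to identify $\Delta_3(0)$ combinatorially via the bijection $\Psi$ of Lemma \ref{Psi_k} and then apply the nerve theorem to a natural covering by subsimplices indexed by the edges of the extended Dynkin diagram of $D_5$. Since $|W|=2^7\cdot 3\cdot 5$, the integer $r$ equals $1$, so a face $\sigma$ lies in $\Delta_3(0)$ precisely when $3$ divides $|W(\sigma)|$.

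Label the vertices of the extended Dynkin diagram $\widetilde{D}_5$ as $v_1,\ldots,v_6$ with edges $v_1v_3,\,v_2v_3,\,v_3v_4,\,v_4v_5,\,v_4v_6$, and write each face of $\Delta$ as $[W]$, where $W\subseteq V:=\{v_1,\ldots,v_6\}$ is its set of white vertices. By Lemma \ref{W_Gamma}, $W(\sigma)$ is the product of the Weyl groups of the connected components of the induced subgraph on the black vertices $V\setminus W$. Every such component is of type $A$ or $D$, and its Weyl group has order divisible by $3$ if and only if it contains an $A_2$, i.e., a pair of adjacent black vertices. Hence
\[
[W]\in\Delta_3(0)\iff V\setminus W\text{ contains an edge of }\widetilde{D}_5.
\]

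For each of the five edges $e$ of $\widetilde{D}_5$, let $K_e$ be the subcomplex of faces $[W]$ with $e\subseteq V\setminus W$; this is the full $3$-simplex on the four vertices $V\setminus e$, and $\Delta_3(0)=\bigcup_{e}K_e$. For any subset $T$ of edges, $\bigcap_{e\in T}K_e$ is the simplex on $V\setminus\bigcup_{e\in T}e$, which is contractible when nonempty and empty precisely when $T$ is an edge cover of $V$. By the nerve theorem, $\Delta_3(0)$ is homotopy equivalent to the nerve $\mathcal{N}$ on the edge set $E$ of $\widetilde{D}_5$ whose faces are the non-edge-covers.

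To identify the edge covers, observe that the pendant vertices $v_1,v_2,v_5,v_6$ each lie on a unique edge, so every edge cover must contain $S_0:=\{v_1v_3,v_2v_3,v_4v_5,v_4v_6\}$; and $S_0$ itself already covers $V$, so the only edge covers are $S_0$ and $E$. Therefore $\mathcal{N}$ is obtained from the full $4$-simplex on $E$ by removing the top face $E$ and the single $3$-face $S_0$. Removing $E$ gives $\partial\Delta^4\cong S^3$, and removing one of its five facets leaves a $3$-ball; hence $\mathcal{N}$, and thus $\Delta_3(0)$, is contractible. The only step requiring any care is the combinatorial classification of edge covers, which is immediate from the pendant structure of $\widetilde{D}_5$.
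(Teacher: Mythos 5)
Your argument is correct, and it takes a genuinely different route from the paper. You both start from the same combinatorial description of $\Delta_3(0)$ (a face is in $\Delta_3(0)$ iff its black vertex set spans an edge of $\widetilde{D}_5$, equivalently the five facets are the $3$-simplices on the complements of the five edges), but the paper then establishes contractibility by discrete Morse theory, exhibiting an explicit acyclic partial matching that leaves only the vertex $6$ unmatched, so that $\Delta_3(0)$ collapses to a point. You instead cover $\Delta_3(0)$ by these five facets $K_e$ and invoke the nerve theorem, reducing the problem to the combinatorics of edge covers of $\widetilde{D}_5$; the pendant structure forces the unique minimal edge cover $S_0$, and the nerve is $\partial\Delta^4$ with one facet removed, a $3$-ball. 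Your approach is arguably cleaner and more structural: it explains \emph{why} the complex is contractible (the nerve is a simplicial ball) and the only thing to check is a two-line classification of edge covers, whereas the paper's matching has to be verified to be acyclic by a finite but unilluminating case check. The nerve-theorem viewpoint also suggests how one might attack the analogous question for other Dynkin diagrams: the topology of $\Delta_p(0)$ is governed by the edge-cover complex of the extended diagram when $p$ first divides $|W|$ at the $A_2$ level. Both proofs correctly conclude contractibility rather than mere acyclicity.
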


  \begin{proof}
    We prove the statement by applying discrete Morse theory. We refer to \cite{K} for materials of discrete Morse theory. We name the vertices of the extended Dynkin diagram of $Spin(10)$ as follows.

    \begin{figure}[H]
      \scalebox{0.9}{
      \begin{tikzpicture}[x=0.7cm, y=0.7cm, thick]
        \draw(-1,-1)--(0,0)--(-1,1);
        \draw(0,0)--(1.4,0);
        \draw(2.4,-1)--(1.4,0)--(2.4,1);
        \fill(0,0) circle[radius=2.5pt];
        \fill(-1,-1) circle[radius=2.5pt];
        \fill(-1,1) circle[radius=2.5pt];
        \fill(1.4,0) circle[radius=2.5pt];
        \fill(2.4,1) circle[radius=2.5pt];
        \fill(2.4,-1) circle[radius=2.5pt];
        \draw(-1.2,1) node[left]{$1$};
        \draw(-0.2,0) node[left]{$2$};
        \draw(-1.2,-1) node[left]{$3$};
        \draw(2.6,1) node[right]{$4$};
        \draw(1.6,0) node[right]{$5$};
        \draw(2.6,-1) node[right]{$6$};
      \end{tikzpicture}}
    \end{figure}

    \noindent By Lemma \ref{W_Gamma}, it is straightforward to see that facets of $\Delta_3(0)$ are
    \[
      1234,\quad 1236,\quad 1346,\quad 1456,\quad 3456.
    \]
    Then we have the following acyclic partial matching.
    \begin{center}
      \begin{tabular}{lllllll}
        (1234,234)&(1236,236)&(1346,134)&(1456,145)&(3456,345)&(123,12)&(124,24)\\
        (126,26)&(136,13)&(146,14)&(156,15)&(346,34)&(356,35)&(456,45)\\
        (16,1)&(23,2)&(36,3)&(46,4)&(56,5)
      \end{tabular}
    \end{center}
    Since all faces of $\Delta_3(0)$ but the vertex 6 appear in the acyclic partial matching above, it follows from the fundamental theorem of discrete Morse theory that $\Delta_3(0)$ collapses onto the vertex 6, implying $\Delta_3(0)$ is contractible.
  \end{proof}

  Next, we consider the case of type $B$ and $C$.

  \begin{proposition}
    \label{BD fail}
    Let $G$ be $Spin(2n+1)$ for $n\geq 3$ or $Sp(n)$, and let $p$ be an odd prime dividing $|W|$. Then for each $k$, $\Delta_p(k)$ is contractible.
  \end{proposition}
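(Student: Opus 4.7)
The plan is to exhibit $\Delta_p(k)$ as a simplicial cone on a single apex, which is automatically contractible. The apex I propose is the vertex $v_*$ of the extended Dynkin diagram at the short-root endpoint (the tip of the double bond, opposite to the Y-junction in the $B_n$ case and opposite to the other long-root end in the $C_n$ case). Two conditions must then be checked: (i) $\{v_*\} \in \Delta_p(k)$, and (ii) for every $\sigma \in \Delta_p(k)$ whose black-vertex set $B$ contains $v_*$, the face with black set $B \setminus \{v_*\}$ also lies in $\Delta_p(k)$.

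For (i), Lemma~\ref{W_Gamma}(2) identifies $|W(\{v_*\})|$ with the order of the Weyl group of the subdiagram obtained by deleting $v_*$. This is of type $D_n$ (order $2^{n-1}n!$) for $G = Spin(2n+1)$ and of type $C_n$ (order $2^n n!$) for $G = Sp(n)$, so $|W|/|W(\{v_*\})|$ is a power of $2$ and is prime to every odd $p$. Hence $\{v_*\} \in \Delta_p(0) \subseteq \Delta_p(k)$.

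For (ii), let $K$ be the connected component of $\Gamma_B$ containing $v_*$. Since $v_*$ is a leaf of the extended Dynkin diagram, $K \setminus \{v_*\}$ is again connected (or empty) and the other components of $\Gamma_B$ are unchanged; by Lemma~\ref{W_Gamma}(1) it suffices to show $v_p(|W(K)|) = v_p(|W(K \setminus \{v_*\})|)$ for every odd $p$. For $Sp(n)$, the properness of $B$ forces $K$ to be a subchain ending at $v_*$ of type $C_a$ for some $1 \le a \le n$, with $K \setminus \{v_*\}$ of type $A_{a-1}$; the orders $|W(C_a)| = 2^a a!$ and $|W(A_{a-1})| = a!$ agree at odd $p$. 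For $Spin(2n+1)$ with $n \ge 3$, either $K$ avoids the Y-junction and is a subchain of type $B_a$ with $K \setminus \{v_*\}$ of type $A_{a-1}$, or $K$ reaches the Y-junction $v_2$ and contains exactly one of the two prongs, in which case $K$ is a linear chain of type $B_n$ and $K \setminus \{v_*\}$ is of type $A_{n-1}$; the remaining possibility, that both prongs belong to $K$, would force $B$ to equal the whole vertex set and is ruled out by properness. In every admissible case $|W(K)|/|W(K \setminus \{v_*\})|$ is a power of $2$.

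The only real subtlety is the case analysis at the Y-junction of the affine $B_n$ diagram, and in particular the elimination of the ``both-prongs'' sub-case; the hypothesis $n \ge 3$ ensures that $v_*$ is not itself adjacent to the Y-junction, keeping the classification clean. Beyond that the argument is a routine comparison of the $p$-adic valuations of $2^a a!$ and $a!$, which makes (ii) hold for all $k$ and all odd $p$ simultaneously, so that the cone structure and hence contractibility of $\Delta_p(k)$ follow.
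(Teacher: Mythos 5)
Your proof is correct and takes essentially the same approach as the paper: both exhibit $\Delta_p(k)$ as a simplicial cone with apex the degree-one vertex $v_*$ at the double-bond end, by showing that the black component $K$ containing $v_*$ is always a chain whose Weyl group has order $2^a a!$, dropping to $a!$ when the leaf $v_*$ is removed, so that $|W_K|/|W_{K\setminus\{v_*\}}|$ is always a power of $2$; the paper phrases this as $\Delta_p(k)=v_**\widehat{\Delta}_p(k)$ where $\widehat{\Delta}_p(k)$ collects the faces with $v_*$ black. Two small slips in your write-up that do not affect the conclusion: the case analysis at the Y-junction omits the sub-case where $K$ reaches the junction but contains \emph{neither} prong (then $K$ is of type $B_{n-1}$ and the ratio is still a $2$-power), and the parenthetical claim that $n\ge 3$ keeps $v_*$ from being adjacent to the junction is false for $n=3$, where $\alpha_3$ is adjacent to the junction $\alpha_2$, though the $2$-power ratio still holds there.
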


  \begin{proof}
    We only prove the case $G=Spin(2n+1)$ because the case $G=Sp(n)$ is quite similarly proved. The extended Dynkin diagram of $Spin(2n+1)$ is given as follows.

    \begin{figure}[H]
      \scalebox{0.9}{
      \begin{tikzpicture}[x=0.7cm, y=0.7cm, thick]
        \draw(-1,1)--(0,0)--(-1,-1);
        \draw(0,0)--(2,0);
        \draw(3.5,0)--(5.5,0);
        \draw(5.5,0.07)--(7,0.07);
        \draw(5.5,-0.07)--(7,-0.07);
        \draw(6.25,0)--(6.4,0.15);
        \draw(6.25,0)--(6.4,-0.15);
        \draw[dashed](2,0)--(3.5,0);
        \fill(0,0) circle[radius=2.5pt];
        \fill(-1,1) circle[radius=2.5pt];
        \fill(-1,-1) circle[radius=2.5pt];
        \fill(1.5,0) circle[radius=2.5pt];
        \fill(4,0) circle[radius=2.5pt];
        \fill(5.5,0) circle[radius=2.5pt];
        \fill(7,0) circle[radius=2.5pt];
        \draw(7,0.2) node[above]{$v$};
      \end{tikzpicture}}
    \end{figure}

    \noindent Let $\widehat{\Delta}_p(k)$ be the subcomplex of $\Delta_p(k)$ consisting of faces $\sigma$ such that in the corresponding colored extended Dynkin diagram, the vertex $v$ is black. Let $\Gamma=\Psi_i(\sigma)$ for an $i$-face $\sigma$ of $\widehat{\Delta}_p(k)$. Let $\Gamma'$ be a colored extended diagram whose vertices have the same color as $\Gamma$ except for the vertex $v$. So the vertex $v$ of $\Gamma'$ is white. By Lemma \ref{W_Gamma}, $\Gamma'$ corresponds to the join $v*\sigma$. Since $|W_{\Gamma}|/|W_{\Gamma'}|$ is a power of 2, the join $v*\sigma$ is an $(i+1)$-face of $\Delta_p(k)$. Thus $\Delta_p(k)$ is the join $v*\widehat{\Delta}_p(k)$, completing the proof.
  \end{proof}

  Finally, we consider the exceptional case. The only cases that are not included in Theorem \ref{torsion exceptional} are $(G,p)=(E_7,5),(E_7,7),(E_8,7)$. Since $|W(E_7)|=2^{10}\cdot 3^4\cdot 5\cdot 7$, we only need to consider $\Delta_p(0)$ for $G=E_7$ and $p=5,7$.

  \begin{proposition}
    \label{E_7 fail}
    For $G=E_7$ and $p=5,7$, $\Delta_p(0)$ is contractible.
  \end{proposition}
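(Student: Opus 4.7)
The plan is to imitate the strategy of Proposition \ref{D fail}, namely, to enumerate the faces of $\Delta_p(0)$ explicitly from the extended Dynkin diagram of $E_7$ using Lemma \ref{W_Gamma}, and then exhibit an acyclic partial matching in the sense of discrete Morse theory whose only critical cell is a single vertex, so that $\Delta_p(0)$ collapses to a point.

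Label the vertices of the extended $E_7$ diagram as $v_1,\ldots,v_7,v_8$ with $v_1,\ldots,v_7$ forming a chain and $v_8$ branching off the middle vertex $v_4$ (say). For $(G,p)=(E_7,7)$, since $|W(E_7)|=2^{10}\cdot3^4\cdot5\cdot7$, a face $\sigma$ belongs to $\Delta_7(0)$ iff $|W_{\Psi(\sigma)}|$ is divisible by $7$, and by Lemma \ref{W_Gamma} this happens iff the induced subdiagram on the white vertices of $\Psi(\sigma)$ contains an irreducible component whose Weyl group order is divisible by $7$. Inspection of the classification shows that among subdiagrams of the extended $E_7$ diagram, the only such components are $A_6$ and $E_7$ itself. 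For $(E_7,5)$, the analogous condition becomes: the white part must contain an irreducible component of type $A_4$, $A_5$, $A_6$, $D_5$, $D_6$, $E_6$ or $E_7$, since these are exactly the subdiagram types with Weyl group order divisible by $5$. In either case, the first step is to write down the (quite short) list of facets of $\Delta_p(0)$ explicitly.

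Once the facets are pinned down, the second step is to choose an ordering of the vertices and construct an acyclic matching on $\Delta_p(0)$ by pairing each non-maximal simplex $\tau$ that does not contain a chosen pivot vertex $v_*$ with $\tau \cup \{v_*\}$ whenever the latter is still a face of $\Delta_p(0)$. This is the same elementary pattern used in the matching table of Proposition \ref{D fail}. In cases where adding $v_*$ leaves $\Delta_p(0)$, one recursively repeats the construction on the link of $v_*$ inside the appropriate lower-dimensional restriction, using a secondary pivot; the symmetry of the $E_7$ diagram with respect to the branch considerably cuts down the number of cases to handle. After exhausting the matching, one checks by the fundamental theorem of discrete Morse theory that exactly one critical cell remains (a single vertex), yielding a collapse of $\Delta_p(0)$ onto a point.

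The main obstacle is twofold. First, one must carefully enumerate all $7$-divisible (respectively $5$-divisible) colored diagrams; for $p=5$ this list is appreciably longer than in the $Spin(10)$ case, because five distinct classical component types can contribute the factor of $5$. Second, acyclicity of the proposed matching (absence of directed cycles in the modified Hasse diagram) is the delicate point — the pivot $v_*$ must be chosen so that the "add pivot" operation does not close a cycle across different strata of facets. I would verify acyclicity by choosing $v_*$ to be a leaf of the extended diagram on the side opposite the required $A_6$ (for $p=7$) or a vertex whose removal separates the diagram into components with controlled $p$-parts (for $p=5$), and then invoke the standard criterion that matching along a fixed pivot with a linear order on the remaining vertices is automatically acyclic. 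A completely explicit matching table, modelled on the table in Proposition \ref{D fail}, then finishes the proof.
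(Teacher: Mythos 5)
Your high-level plan (enumerate the facets of $\Delta_p(0)$ via Lemma \ref{W_Gamma} and then exhibit an explicit acyclic matching collapsing $\Delta_p(0)$ to a vertex) is exactly the paper's approach for $p=5$. However, your stated criterion for membership in $\Delta_p(0)$ is reversed, and this is a substantive error. You claim that $\sigma\in\Delta_p(0)$ if and only if the induced subdiagram on the \emph{white} vertices of $\Psi(\sigma)$ has a component with Weyl group order divisible by $p$. In fact the relevant subdiagram is the one on the \emph{black} vertices: $W(\sigma)$ is generated by reflections in the roots of walls through $\sigma$, which are precisely the $n-\dim\sigma$ black-colored vertices of $\Psi(\sigma)$, and part (1) of Lemma \ref{W_Gamma} decomposes $W_\Gamma$ over the connected components of $\Gamma$ \emph{after removing the white vertices}. (One can cross-check with the $Spin(10)$ facet list in Proposition \ref{D fail}: e.g.\ $1234$ has black vertices $5,6$ forming an $A_2$ of order $6$.) With your white-vertex criterion you would enumerate a different, incorrect collection of simplices, and the subsequent matching argument would be about the wrong complex.

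Two smaller points. First, your list of subdiagram types of $\widetilde E_7$ with Weyl order divisible by $7$ omits $A_7$ (the full chain $1,\ldots,7$, of order $8!$); this does not produce a facet of $\Delta_7(0)$, but it does produce the vertex obtained by whitening $8$, so it matters for the $0$-cells. Second, once the criterion is corrected, discrete Morse theory is unnecessary for $p=7$: the facets of $\Delta_7(0)$ are the two edges $18$ and $78$ (black subdiagram a 6-chain $A_6$), and the only vertices are $1,8,7$ (black subdiagrams $E_7$, $A_7$, $E_7$ respectively), so $\Delta_7(0)$ is literally the path graph $1-8-7$, which is contractible by inspection. Even granting these corrections, the proposal stops at a plan and never produces the facet list of $\Delta_5(0)$ (which is $1237,1238,1278,1567,1678,5678$, all coming from black $A_4$ chains) or the matching table itself, so as written it is an outline rather than a proof.
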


  \begin{proof}
    Let $G=E_7$. Then its extended Dynkin diagram is given as follows.

    \begin{figure}[H]
      \scalebox{0.9}{
      \begin{tikzpicture}[x=0.7cm, y=0.7cm, thick]
        \draw(0,0)--(9,0);
        \draw(4.5,0)--(4.5,1.5);
        \fill(0,0) circle[radius=2.5pt];
        \fill(1.5,0) circle[radius=2.5pt];
        \fill(3,0) circle[radius=2.5pt];
        \fill(4.5,0) circle[radius=2.5pt];
        \fill(6,0) circle[radius=2.5pt];
        \fill(7.5,0) circle[radius=2.5pt];
        \fill(9,0) circle[radius=2.5pt];
        \fill(4.5,1.5) circle[radius=2.5pt];
        \draw(0,-0.2) node[below]{$1$};
        \draw(1.5,-0.2) node[below]{$2$};
        \draw(3,-0.2) node[below]{$3$};
        \draw(4.5,-0.2) node[below]{$4$};
        \draw(6,-0.2) node[below]{$5$};
        \draw(7.5,-0.2) node[below]{$6$};
        \draw(9,-0.2) node[below]{$7$};
        \draw(4.7,1.5) node[right]{$8$};
      \end{tikzpicture}}
    \end{figure}

    \noindent Then the facets of $\Delta_5(0)$ are
    \[
      1237,\quad 1238,\quad 1278,\quad 1567,\quad 1678,\quad 5678.
    \]
    So we have the following acyclic partial matching.
    \begin{center}
      \begin{tabular}{llllll}
        (1237,137)&(1238,138)&(1278,278)&(1567,157)&(1678,168)&(5678,578)\\
        (123,13)&(127,17)&(128,18)&(156,15)&(167,67)&(178,78)\\
        (237,37)&(238,38)&(567,57)&(568,58)&(678,68)&(12,2)\\
        (16,6)&(23,3)&(27,7)&(28,8)&(56,5)
      \end{tabular}
    \end{center}
    Note that all faces of $\Delta_5(0)$ but the vertex 1 appear in the acyclic partial matching above. Thus by the fundamental theorem of discrete Morse theory, $\Delta_5(0)$ collapses onto the vertex 1, implying $\Delta_5(0)$ is contractible. By Lemma \ref{W_Gamma}, the facets of $\Delta_7(0)$ are 18 and 78. Then $\Delta_7(0)$ is a path graph of length 2, implying it is contractible.
  \end{proof}

  Since $|W(E_8)|=2^{14}\cdot 3^5\cdot 5^2\cdot 7$, we only need to consider $\Delta_7(0)$ for $(G,p)=(E_8,7)$.

  \begin{proposition}
    \label{E_7 fail}
    For $G=E_8$, $\Delta_7(0)$ is contractible.
  \end{proposition}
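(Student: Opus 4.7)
The plan is to mimic the discrete Morse theory argument used in the preceding two propositions (for $(E_7,5)$ and $(E_7,7)$): produce an explicit description of the face lattice of $\Delta_7(0)$ and then exhibit an acyclic partial matching with a single critical cell.

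First I would label the nine nodes of the extended Dynkin diagram $\tilde{E}_8$ so that $1{-}2{-}3{-}4{-}5{-}6{-}7{-}8$ is a chain and the additional leaf $9$ is attached to the trivalent vertex $3$; this realizes $\tilde{E}_8$ as the tree with three legs of lengths $1,2,5$ from $3$. By Lemma~\ref{W_Gamma}, a face $\sigma$ lies in $\Delta_7(0)$ iff some connected component of the induced subgraph on the black vertices of $\Psi(\sigma)$ is of a simply laced type $X$ with $7\mid|W_X|$. Among simply laced finite Dynkin types the only such $X$ are $A_n$ ($n\ge 6$), $D_n$ ($n\ge 7$), $E_7$, and $E_8$, and each of these properly contains an induced $A_6$. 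So the facets of $\Delta_7(0)$ correspond exactly to the induced $A_6$-subgraphs of $\tilde{E}_8$, i.e., to the induced paths on six vertices of this tree. Because $\tilde{E}_8$ is a tree, such paths are uniquely determined by their pairs of endpoints at graph distance $5$; direct inspection yields the four pairs $(1,6)$, $(2,7)$, $(3,8)$, $(7,9)$ and hence four facets, which in the paper's white-vertex notation are the $2$-faces $789$, $189$, $129$, $128$.

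With the facets in hand I would list the remaining faces of $\Delta_7(0)$ (they turn out to be the five vertices $1,2,7,8,9$ and the eight edges $12, 18, 19, 28, 29, 78, 79, 89$, so $\chi(\Delta_7(0))=5-8+4=1$ as a sanity check) and then write down the acyclic partial matching
\[
(128,28),\; (129,29),\; (189,19),\; (789,79),\; (12,2),\; (18,1),\; (78,7),\; (89,9),
\]
whose unique unmatched cell is the vertex $8$. Acyclicity is easy to verify: in the gradient digraph, every unmatched $1$-face in the boundary of a matched $2$-face is itself paired with a $0$-face rather than with another $2$-face, so any gradient path that climbs back to dimension $2$ (from $129$ through $19$ to $189$) promptly dies, and in dimension $1\leftrightarrow 0$ every gradient path terminates at the critical vertex $8$. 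The fundamental theorem of discrete Morse theory then gives a collapse of $\Delta_7(0)$ onto $\{8\}$, whence it is contractible.

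The only step where any real thought is required is the first: verifying that $A_6$ is the unique minimal simply laced type whose Weyl group has order divisible by $7$, and enumerating its induced copies in $\tilde{E}_8$. Once that is settled, the remainder is bookkeeping that directly mirrors the preceding $E_7$ proofs.
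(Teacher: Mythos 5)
Your proof is correct and takes essentially the same approach as the paper: both identify $\Delta_7(0)$ explicitly via Lemma~\ref{W_Gamma} and the combinatorics of induced subgraphs of $\tilde{E}_8$ (facets $\leftrightarrow$ induced $A_6$-paths, giving a $5$-vertex, $8$-edge, $4$-triangle complex) and then conclude contractibility. Where the paper simply draws the complex and observes it is a triangulated disk, you instead supply a discrete Morse matching with a single critical vertex, mirroring the paper's own proofs for the $E_7$ cases; your node labeling (branch attached at $3$ rather than at $6$) differs cosmetically from the paper's, and your sentence ``every unmatched $1$-face $\ldots$ is itself paired with a $0$-face'' is not literally true (the edge $19$ is paired with the $2$-face $189$), but the clause that follows handles exactly this one exception, so the acyclicity check is sound.
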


  \begin{proof}
    The extended Dynkin diagram of $E_8$ is given as below.

    \begin{figure}[H]
      \scalebox{0.9}{
      \begin{tikzpicture}[x=0.7cm, y=0.7cm, thick]
        \draw(0,0)--(10.5,0);
        \draw(7.5,0)--(7.5,1.5);
        \fill(0,0) circle[radius=2.5pt];
        \fill(1.5,0) circle[radius=2.5pt];
        \fill(3,0) circle[radius=2.5pt];
        \fill(4.5,0) circle[radius=2.5pt];
        \fill(6,0) circle[radius=2.5pt];
        \fill(7.5,0) circle[radius=2.5pt];
        \fill(9,0) circle[radius=2.5pt];
        \fill(10.5,0) circle[radius=2.5pt];
        \fill(7.5,1.5) circle[radius=2.5pt];
        \draw(0,-0.2) node[below]{$1$};
        \draw(1.5,-0.2) node[below]{$2$};
        \draw(3,-0.2) node[below]{$3$};
        \draw(4.5,-0.2) node[below]{$4$};
        \draw(6,-0.2) node[below]{$5$};
        \draw(7.5,-0.2) node[below]{$6$};
        \draw(9,-0.2) node[below]{$7$};
        \draw(10.5,-0.2) node[below]{$8$};
        \draw(7.7,1.5) node[right]{$9$};
      \end{tikzpicture}}
    \end{figure}

    \noindent Then by Lemma \ref{W_Gamma}, $\Delta_7(0)$ is the following 2-dimensional simplicial complex.

    \begin{figure}[H]
      \scalebox{0.9}{
      \begin{tikzpicture}[x=0.7cm, y=0.7cm, thick]
        \filldraw[fill=lightgray](0,0)--(0,2)--(2,2)--(4,0)--(0,0);
        \draw(0,0)--(2,1)--(4,0);
        \draw(0,0)--(2,2)--(2,1);
        \fill(0,0) circle[radius=2.5pt];
        \fill(0,2) circle[radius=2.5pt];
        \fill(2,2) circle[radius=2.5pt];
        \fill(4,0) circle[radius=2.5pt];
        \fill(2,1) circle[radius=2.5pt];
        \draw(0,-0.2) node[below]{$9$};
        \draw(0,2.2) node[above]{$2$};
        \draw(2,2.2) node[above]{$1$};
        \draw(4,-0.2) node[below]{$7$};
        \draw(2,0.8) node[below]{$8$};
      \end{tikzpicture}}
    \end{figure}

    \noindent Thus $\Delta_7(0)$ is contractible.
  \end{proof}

\end{document}